\def\caseswithdelim#1#2{\left#1\,\vcenter{\normalbaselines\m@th
  \ialign{\strut$##\hfil$&\quad##\hfil\crcr#2\crcr}}\right.}
\theoremstyle{plain}
\newtheorem{lemma}{Lemma}[section]
\newtheorem{proposition}[lemma]{Proposition}
\newtheorem{corollary}[lemma]{Corollary}
\newtheorem{theorem}[lemma]{Theorem}
\theoremstyle{definition}
\newtheorem{remark}[lemma]{Remark}
\newcommand{\lra}{\leftrightarrows}
\newcommand{\ra}{\rightarrow}
\newcommand{\inclu}{\hookrightarrow}
\newcommand{\up}{{\uparrow}}
\newcommand{\tc}{\textit}
\newcommand{\mb}[1]{\mbox{#1}}
\newcommand{\ca}{\mathcal}
\newcommand{\mf}{\mathsf}
\newcommand{\mc}[1]{\mathit{#1}}
\newcommand{\mi}{\mathit}
\newcommand{\se}{\subseteq}
\newcommand{\sm}{\setminus}
\newcommand{\we}{\wedge}
\newcommand{\ve}{\vee}
\newcommand{\bve}{\bigvee}
\newcommand{\cu}{\cup}
\newcommand{\bca}{\bigcap}
\newcommand{\bcu}{\bigcup}
\newcommand{\bd}[1]{\mathbf{#1}}
\newcommand{\op}{\mathfrak{o}}
\newcommand{\cl}{\mathfrak{c}}
\newcommand{\Fi}{\mathsf{Filt}}
\newcommand{\Fip}{\mathsf{Filt}(L^+)}
\newcommand{\Fim}{\mathsf{Filt}(L^-)}
\newcommand{\less}[2]{\nabla (#1)\cap\Delta (#2)}
\newcommand{\na}{\nabla}
\newcommand{\del}{\Delta}
\newcommand{\va}[3]{\varphi_{\ca{#1}}^{#2}(#3^{#2})}
\newcommand{\pa}[1] {(#1^+,#1^-)}
\newcommand{\p}[2]{(#1^+,#2^-)}
\newcommand{\Om}{\Omega}
\newcommand{\bpt}{\mf{bpt}}
\newcommand{\bOm}{\mf{b}\Om}
\newcommand{\pt}{\mathsf{pt}}
\newcommand{\Lq}[1]{\ca{L}/#1}
\newcommand{\Acf}{\mathsf{A}_{cf}(\ca{L})}
\newcommand{\Apm}{\mathsf{A}_{\pm}(\ca{L})}
\newcommand{\Ad}{\mf{A}(\ca{L})}
\newcommand{\fin}{\mf{fin}}
\newcommand{\sy}[1]{{{\ulcorner}} #1 {{\urcorner}}}
\newcommand{\syp}[1]{{\ulcorner} #1^+{\urcorner}}
\newcommand{\sym}[1]{{\ulcorner} #1^-{\urcorner}}
\newcommand{\syw}[2]{\syp{#1}\we \sym{#2}}
\newcommand{\syv}[2]{\syp{#1}\ve \sym{#2}}
\newcommand{\cpu}[1]{#1^+\oplus #1^-}
\title{The assembly of a pointfree bispace and its two variations}
\author{Anna Laura Suarez\fnref{address}}
\address{CMUC, Department of Mathematics, University of Coimbra, 3001-454, Coimbra, Portugal}
\address{School of Computer Science, University of Birmingham, B15 2TT, Birmingham, UK}
\ead{axs1431@cs.bham.ac.uk}
\begin{document}

\begin{frontmatter}

\begin{abstract}
The duality of finitary biframes as pointfree bitopological spaces is explored. In particular, for a finitary biframe $\ca{L}$ the ordered collection of all its pointfree bisubspaces (i.e. its biquotients) is studied. It is shown that this collection is bitopological in three meaningful ways. In particular it is shown that, apart from the assembly of a finitary biframe $\ca{L}$, there are two other structures $\Acf$ and $\Apm$, which both have the same main component as $\Ad$. The main component of both $\Acf$ and $\Apm$ is the ordered collection of all biquotients of $\ca{L}$. The structure $\Acf$ being a biframe shows that the collection of all biquotients is generated by the frame of the \tc{patch-closed} biquotients together with that of the \tc{patch-fitted} ones. The structure $\Apm$ being a biframe shows the collection of all biquotients is generated by the frame of the \tc{positive} biquotients together with that of the \tc{negative} ones.
Notions of fitness and subfitness for finitary biframes are introduced, and it is shown that the analogues of both characterization theorems for these axioms appearing in \cite{picadopultr2011frames} hold. A spatial, bitopological version of these theorems is proven, in which finitary biframes whose spectrum is pairwise $T_1$ are characterized, among other things in terms of the spectrum $\bpt(\Acf)$.
\end{abstract}

\begin{keyword}
Frame \sep locale \sep frame congruence \sep assembly \sep biframe \sep d-frame \sep bispace

\MSC 06D22

\end{keyword}

\end{frontmatter}

\tableofcontents 

\section*{Introduction}

Bitopological spaces were introduced in \cite{kelly63} for the study of quasi-metric spaces. In \cite{jung06} and in \cite{bezhanishvili10} it is explained how Stone-type dualities are bitopological in nature, and that therefore it is advantageous to think of these dualities in terms of bitopological spaces. The study of pointfree bispaces, too, has been pursued. In \cite{banaschewski83} structures called \emph{biframes} are introduced as pointfree duals of bitopological spaces. In \cite{jung06} and in \cite{jakl2018}, instead, the category of \emph{d-frames} is studied as a suitable algebraic category representing that of bitopological spaces.

\subsection*{Finitary biframes}
In \cite{suarez2020category}, finitary biframes are introduced as pointfree duals of bitopological spaces. The advantages of adopting these as our notion of pointfree bispace mainly revolve around the theory of sublocales. Biframe quotients simply amount to quotients of the main component of the biframe, and this means that the pointfree bisubspaces of a biframe amount to the pointfree subspaces of its main component. Thus, distinct biframes with the same main component have the same lattice of quotients. Furthermore, the ordered collection $\mf{S}(L)$ of frame quotients of $L$ cannot be equipped with a meaningful bitopological structure inherited from $\ca{L}$.

For finitary biframes, instead, the situation does not collapse to the monotopological case. A \tc{biquotient} of a finitary biframe is simply a biframe quotient which is finitary. Finitary biframes with the same main component may have different lattices of biquotients. For every finitary biframe $\ca{L}$ we have a finitary biframe $\ca{L}$ such that its main component is anti-isomorphic to the coframe of biquotients of $\ca{L}$. The biframe $\mf{A}(\ca{L})$ is equipped with a bitopological structure which is indeed inherited from $\ca{L}$.  

\subsection*{The aim of this article}
For a frame $L$, there exists the frame $\mf{A}(L)$ of the congruences of $L$ ordered under set inclusion. In pointfree topology, this is regarded to be the collection of all pointfree subspaces of $L$ (see \cite{picadopultr2011frames}, \cite{johnstone82}, \cite{isbell72}). Embedded in $\mf{A}(L)$ we have the subframe of closed congruences (representing closed subspaces of the space $L$) and the subframe of \tc{fitted} congruences, namely, the joins of open congruences (those representing the open subspaces of $L$). These two frames generate all of $\mf{A}(L)$. Thus, there is a bitopological interplay between closed and fitted subspaces. 

The central claim of this article is that for a pointfree subspace $\ca{L}$ we still have a bitopological interplay between closed and fitted bisubspaces, but next to this we have a bitopological interplay which is -- so to speak -- orthogonal to this: the interplay between positive and negative bisubspaces of $\ca{L}$. 

For a biframe $\ca{L}=(L^+,L^-,L)$, for brevity we will regard $L$ as a free construction generated by $L^+$ and $L^-$, and thus we will denote the two subframe inclusions $\sy{-}:L^+\ra L$, maps assigning to each generator its corresponding syntactic expression. For a finitary biframe $\ca{L}$, it is known that its biquotients form a coframe, which we call $\ca{S}(\ca{L})$. Let us look, then, at the two natural ways in which the frame $\mf{S}(\ca{L})^{op}$ is bitopological.

\begin{itemize}
    \item All closed congruences of $L$ are contained in $\mf{S}(\ca{L})^{op}$. These form a subframe of this collection, which we call $\mf{Cl}(L)$. All fitted congruences $\bve_i \del(f_i)$, with $f_i\in L$ a \tc{finitary} element, are in $\mf{S}(\ca{L})^{op}$. These, too, form a subframe of $\mf{S}(\ca{L})^{op}$, which we call $\mf{Fitt}_{\fin}(L)$. The subframes of closed congruences and that of finitary fitted ones together generate all of $\mf{S}(\ca{L})^{op}$. This means that the structure
    \[
    (\mf{Cl}(L),\mf{Fitt}_{\fin}(L),\mf{S}(\ca{L})^{op})
    \]
    is a biframe. We call this the \tc{closed-fitted} assembly of the finitary biframe $\ca{L}$.
    \item By \tc{positive} biquotient of a finitary biframe $\ca{L}=(L^+,L^-,L)$ we mean a biframe quotient of the form $\Lq{\syp{C}}$, where $C^+$ is a congruence on $L^+$ and $\syp{C}$ is the congruence $\{(\syp{x},\syp{y}):(x^+,y^+)\in C^+\}$ on $L$. We define \tc{negative} biquotients similarly. Positive biquotients of $\ca{L}$ form a subframe of $\mf{S}(\ca{L})^{op}$, and so do negative biquotients. These generate the whole frame $\mf{S}(\ca{L})^{op}$. This means that for every finitary biframe $\ca{L}$ we have a biframe
\[
(\mf{S}^+(\ca{L})^{op},\mf{S}^-(\ca{L})^{op},\mf{S}(\ca{L})^{op}),
\]
where $\mf{S}^+(\ca{L})$ is the coframe of positive biquotients and $\mf{S}^-(\ca{L})$ that of negative ones. This biframe is called the \tc{positive-negative} assembly of the finitary biframe $\ca{L}$.
\end{itemize}

\section*{Preliminaries}
\subsection*{Biframes}
Biframes are studied in \cite{banaschewski83} and in \cite{schauerte92} as pointfree duals of bitopological spaces. A \emph{biframe} is a triple $(L^+,L^-,L)$ where the three components are all frames, and such that there are frame injections $e^+_L:L^+\ra L$ and $e^-_L:L^-\ra L$ such that every element of $L$ is of the form $\bve_i e^+_L(x^+_i)\we e^-_L(x^-_i)$ for families $x^+_i\in L^+$ and $x^-_i\in L^-$. For a biframe $\ca{L}$, we call $L$ its \tc{main component}. The main component of a biframe represents pointfreely the patch topology of a bispace, while $L^+$ and $L^-$ represent the positive and the negative topologies, respectively. For every biframe $\ca{L}$ the main component is canonically isomorphic to a quotient $(\cpu{L}/C_L)$ of the frame coproduct $\cpu{L}$, in the sense that there is always a congruence $C_L$ on $\cpu{L}$ and an isomorphism $i:L\cong (\cpu{L})/C_L$ such that the following diagram and the analogous one for the negative frame commute. 
\[
\begin{tikzcd}[row sep=large, column sep=large]
L
\arrow{r}{i(\cong)}
& (\cpu{L})/C_L\\
L^+
\arrow{u}{e^+_L}
\arrow{ur}{\sy{-}}
\end{tikzcd}
\]
Here, $\sy{-}$ is the canonical map of generators into the free construction $(\cpu{L})/C_L$. It is in virtue of this fact that, unless otherwise stated, we will regard the main component $L$ of a biframe $\ca{L}$ as a free construction with generators from $L^+\cup L^-$ and we will regard the injections $e^+_L$ and $e^-_L$ as being the maps $\sy{-}:L^+\ra L$ and $\sy{-}:L^-\ra L$ sending each generator to the syntactic expression corresponding to it in the free construction $L$. A map of biframes $f:\ca{L}\ra \ca{M}$ is a pair of maps $f^+:L^+\ra M^+$ and $f^-:L^-\ra M^-$ such that there is a frame map $f:L\ra M$ with $f(\syp{a})=\sy{f^+(a^+)}$ for each $a^+\in L^+$, and similarly for each $a^-\in L^-$. We call $\bd{BiFrm}$ the category of biframes. We have an adjunction $\bOm:\bd{BiTop}\lra \bd{BiFrm}^{op}:\bpt$, in which the two functors act as follows.
\begin{itemize}
    \item For every bispace $(X,\Om^+(X),\Om^-(X))$, the biframe $\mf{b}\Om(X)$ is defined to be the triple 
    \[
    (\Om^+(X),\Om^-(X),\Om(X)),
    \]
    with set inclusions as the two frame injections. Here, $\Om(X)$ is the frame of opens of the topology on $X$ generated by $\Om^+(X)\cup \Om^-(X)$. For a bicontinuous map $f:X\ra Y$, we define the map $\mf{b}\Om(f)$ as the pairing of preimage maps $(f^{-1},f^{-1}):\mf{b}\Om(Y)\ra \mf{b}\Om(X)$. 
    \item For every biframe $\ca{L}$, we define the bispace $\bpt(\ca{L})$ as follows. Its underlying set of points is $\bd{BiFrm}(\ca{L},\bd{2})$, where $\bd{2}$ denotes the biframe $(2,2,2)$. A positive open of $\bpt(\ca{L})$ is any subset of $\bd{BiFrm}(\ca{L},\bd{2})$ of the form $\{\pa{f}\in \bd{BiFrm}(\ca{L}, \bd{2}): f^+(a^+)=1\}$ for some $a^+\in L^+$; negative opens are defined analogously. For every biframe map $f:\ca{L}\ra \ca{M}$ we define the bicontinuous map $\bpt(f):\bpt(\ca{M})\ra \bpt(\ca{L})$ as the pairing of precompositions $(-\circ f^+,-\circ f^-)$.
    \end{itemize}
\subsection*{Quotients of biframes}
We may always quotient a frame $L$ by a relation $R$. This means that for a frame $L$ and for a relation $R$ on it, there is always a quotient $L/R$, whose quotient map we call $q_R:L\ra L/R$, such that the following holds: whenever $f:L\ra M$ is a frame map such that $f(x)\leq f(y)$ whenever $(x,y)\in R$, there is a unique frame map $\tilde{f}:L/R\ra M$ such that $f=\tilde{f}\circ q_R$. We refer as $[x]_R$ to the equivalence class of $x$ in $L/R$, for every $x\in L$. For a biframe $\ca{L}$ and a relation $R$ on its main component $L$, the biframe quotient $\ca{L}/R$ is the biframe $(L^+/R^+,L^-/R^-,L/R)$, where $L^+/R^+$ is the subframe $\{[\syp{x}]_R:x^+\in L^+\}$ of $L$, and $L^-/R^-$ is defined similarly.

\section{Finitary biframes}
If we have a frame $L$ generated by a subset $S\se L$, we say that an element of $L$ is \tc{finitary with respect to }$S$ if it is a finite join of finite meets of generators. ABy distributivity of $L$, every finitary element can also be written as a finite meet of finite joins of generators. For a biframe $\ca{L}$, we call $\fin(L)$ the set of elements of $L$ which are finitary with respect to the set of generators $e^+_L[L^+]\cup e^-_L[L^-]$. Every element in $\fin(L)$, then, can be written as $(\syw{x_1}{x_1})\ve ...\ve (\syw{x_n}{x_n})$ or as $(\syv{y_1}{y_1})\we ...\we (\syv{y_m}{y_m})$. 
When we have a biframe $\ca{L}$, we say that a congruence on $L$ is \tc{finitary} if and only if it is generated by its restriction to $ \fin(L)\times \fin(L)$. We say that a relation on $L$ is \tc{finitary} if it is only constituted of pairs of finitary elements. We say that a biframe $\ca{L}$ is \tc{finitary} if and only if $C_L$ a finitary congruence. 
Recall that a \tc{precongruence} on a frame $L$ (see \cite{moshier17}) is a relation $R\se L\times L$ such that it is reflexive and transitive, and such that it is a subframe of $L\times L$. We recall that congruences are just precongruences which are symmetric relations. For a biframe $\ca{L}$ we say that a precongruence on $L$ is \tc{finitary} if it is generated by its restriction to $\fin(L)\times \fin(L)$.
\begin{lemma}\label{finpre}
A finitary precongruence on $L$ is generated by its restriction to $\fin^{\we}(L)\times \fin^{\ve}(L)$, where $\fin^{\we}(L)=\{\syw{a}{a}:a^+\in L^+,a^-\in L^-\}$ and $\fin^{\ve}=\{\syv{a}{a}:a^+\in L^+,a^-\in L^-\}$.
\end{lemma}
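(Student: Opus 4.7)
My plan is to show that $R$ is contained in the precongruence $R^\sharp$ generated by $R\cap(\fin^\we(L)\times\fin^\ve(L))$; the reverse inclusion $R^\sharp\subseteq R$ holds automatically because $R$ itself is a precongruence containing $R\cap(\fin^\we\times\fin^\ve)$. The hypothesis that $R$ is finitary says that $R$ is generated, as a precongruence, by $R\cap(\fin(L)\times\fin(L))$, so it is enough to show $R\cap(\fin\times\fin)\subseteq R^\sharp$.

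To this end I would fix a pair $(a,b)\in R\cap(\fin\times\fin)$. Using the two canonical forms for finitary elements recalled just before the lemma, write $a=\bigvee_{i=1}^{n}a_i$ with $a_i=\syw{x_i}{x_i}\in\fin^\we(L)$ and $b=\bigwedge_{j=1}^{m}b_j$ with $b_j=\syv{y_j}{y_j}\in\fin^\ve(L)$. The core step is to establish that every pair $(a_i,b_j)$ belongs to $R$. Granted this, each $(a_i,b_j)$ lies in $R\cap(\fin^\we\times\fin^\ve)\subseteq R^\sharp$, and since $R^\sharp$ is a subframe of $L\times L$ we can assemble the target pair componentwise as $(a,b)=\bigwedge_j\bigl(\bigvee_i(a_i,b_j)\bigr)$: the inner joins produce $(a,b_j)\in R^\sharp$ for each $j$, and the outer finite meet produces $(a,b)\in R^\sharp$.

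The main obstacle is therefore the derivation of $(a_i,b_j)\in R$ from the knowledge that $(a,b)\in R$. I would combine $(a,b)$ with the reflexive pairs $(a_i,a_i),(b_j,b_j)\in R$ using the subframe and transitivity closure of $R$: componentwise meet of $(a,b)$ with $(a_i,a_i)$ gives $(a_i,b\we a_i)\in R$ because $a_i\leq a$, and componentwise join of $(a,b)$ with $(b_j,b_j)$ gives $(a\ve b_j,b_j)\in R$ because $b\leq b_j$. A careful iteration of these two basic moves, combined with transitivity of $R$ and the distributivity of $L$ used to reorganise meets and joins of the positive and negative generators $x_i^\pm,y_j^\pm$, should yield $(a_i,b_j)$. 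Unwinding this combinatorial derivation explicitly is the technical heart of the proof.
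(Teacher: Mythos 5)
Your strategy is the same as the paper's: write $a=\bigvee_{i}a_i$ with $a_i\in\fin^{\we}(L)$ and $b=\bigwedge_j b_j$ with $b_j\in\fin^{\ve}(L)$, argue that every cross pair $(a_i,b_j)$ already lies in $R$, and then recover $(a,b)$ from these pairs inside the generated precongruence using its closure under componentwise joins and finite meets. Your reduction to showing $R\cap(\fin(L)\times\fin(L))\subseteq R^{\sharp}$ and your reassembly $(a,b)=\bigwedge_j\bigl(\bigvee_i(a_i,b_j)\bigr)$ are correct, and are in fact spelled out more carefully than in the paper, which only records the cross-pair step.

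The gap is exactly the step you flag as the technical heart, and the route you propose cannot close it. Your derivation uses only reflexivity, transitivity and subframe closure, but no argument from those three properties alone can produce the cross pairs: the diagonal $\Delta=\{(x,x):x\in L\}$ satisfies all three, contains the pair $(a,a)$ for $a=\syp{x}\ve\sym{y}=(\syp{x}\we\sym{1})\ve(\syp{1}\we\sym{y})$, and yet does not contain the cross pair $(\syp{x},\syv{x}{y})$ unless $\sym{y}\leq\syp{x}$. Concretely, your two basic moves yield $(a_i,\,b\we a_i)$ and $(a\ve b_j,\,b_j)$, and splicing these by transitivity would require $(b\we a_i,\,a\ve b_j)\in R$, which is just another instance of what you are trying to prove; the iteration never terminates. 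What actually makes the step go through --- and what the paper's one-line assertion is tacitly using --- is that a precongruence, being the record of which inequalities $q(x)\leq q(y)$ hold in a quotient of $L$, contains the order relation of $L$. Once $\leq\;\subseteq R$ is granted, the cross pairs follow in one line by transitivity from $a_i\leq a$, $(a,b)\in R$ and $b\leq b_j$, and no combinatorial unwinding is needed.
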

\begin{proof}
Every finitary element of $L$ can be written both as a finite conjunction of binary disjunctions and as a finite disjunction of binary conjunctions. So, whenever a precongruence on $L$ contains the pair $((\syw{x_1}{x_1})\ve ...\ve (\syw{x_n}{x_n}),(\syv{y_1}{y_1})\we ...\we (\syv{y_m}{y_m}))$, it also contains the pairs $\{(\syw{x_a}{x_a},\syv{y_b}{y_b}):a\leq n,b\leq m\}$.
\end{proof}
For a finitary biframe $\ca{L}$, we call $R_L$ the relation $\{(\syw{a}{a},\syv{b}{b}): \syw{a}{a}\leq \syv{b}{b}\mb{ in }L\}$ which generates the congruence $C_L$. For a congruence $C_L$ on $L$, we also call $\mi{fin}(C_L)$ the congruence generated by $C$ restricted to $\fin(L)$. Finitary biframes determine a full subcategory of the category $\bd{BiFrm}$ of biframes, which we call $\bd{BiFrm_{fin}}$.  
In \cite{suarez2020category}, the following is shown.
\begin{lemma}\label{finitarylemma9}
If we have two frames $L^+$ and $L^-$ and a finitary congruence $C$ on $\cpu{L}$, the biframe $(L^+,L^-,\cpu{L})/C=(L^+/C^+,L^-/C^-,(\cpu{L})/C)$ is finitary.
\end{lemma}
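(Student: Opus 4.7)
The plan is to realise the main component $(\cpu{L})/C$ as a quotient of the coproduct $(L^+/C^+)\oplus (L^-/C^-)$ by a finitary congruence; this is exactly what is required for the biframe $(L^+/C^+,L^-/C^-,(\cpu{L})/C)$ to be finitary by definition.

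First, I would use the fact that the quotient maps $q^+:L^+\ra L^+/C^+$ and $q^-:L^-\ra L^-/C^-$ induce, via the universal property of the frame coproduct, a surjective frame map $\psi:\cpu{L}\epi (L^+/C^+)\oplus (L^-/C^-)$. Its kernel $K$ is the congruence on $\cpu{L}$ generated by the pairs $(\syp{x},\syp{y})$ with $(x,y)\in C^+$ together with the analogous pairs on the negative side. Because $C^+$ and $C^-$ are by construction the restrictions of $C$ to positive and negative generators, $K\se C$. Setting $C_M:=C/K$, one then has $(\cpu{L})/C\cong \bigl((L^+/C^+)\oplus (L^-/C^-)\bigr)/C_M$, so the claim reduces to showing that $C_M$ is finitary.

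The key step is the transfer of finitariness from $C$ to $C_M$. Since $C$ is finitary, write $C=\langle T\rangle$ with $T$ the restriction of $C$ to $\fin(\cpu{L})\times \fin(\cpu{L})$. The map $\psi$ preserves finite joins and meets and sends each generator $\syp{x}$ (resp.\ $\sym{y}$) of $\cpu{L}$ to a generator of $(L^+/C^+)\oplus (L^-/C^-)$, hence it carries $\fin(\cpu{L})$ into the finitary elements of the coproduct. Thus $(\psi\times \psi)(T)$ is a finitary relation on the coproduct, and I would then verify that it generates $C_M$: the inclusion $\langle (\psi\times \psi)(T)\rangle \se C_M$ is immediate, while for the reverse one pulls back $\langle (\psi\times \psi)(T)\rangle$ along $\psi\times \psi$ to obtain a congruence on $\cpu{L}$ containing $T\cup K$, hence containing $\langle T\rangle =C$.

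I expect the main obstacle to be essentially bookkeeping: identifying $L^+/C^+$ viewed as an abstract quotient of $L^+$ with the subframe $\{[\syp{x}]_C:x^+\in L^+\}$ of $(\cpu{L})/C$, and checking that the resulting square of quotient maps commutes so that $C_M=C/K$ really is the defining congruence of the biframe as stipulated. Once this square is in place, the finitariness of $C_M$ is forced by the preservation properties of $\psi$ that confine the image of $\fin(\cpu{L})$ to the finitary part of the coproduct.
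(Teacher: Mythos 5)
The paper does not actually prove this lemma: it is imported verbatim from \cite{suarez2020category} (``In \cite{suarez2020category}, the following is shown''), so there is no in-paper proof to compare your attempt against. Judged on its own terms, your argument is correct and essentially complete as an outline. The route --- realise $(\cpu{L})/C$ as $\bigl((L^+/C^+)\oplus (L^-/C^-)\bigr)/C_M$ via the surjection $\psi$ induced by the two component quotient maps, observe $K=\ker\psi\se C$, and transfer finitariness by pushing a finitary generating relation $T$ of $C$ forward along $\psi$ --- works, and your pullback argument for the reverse inclusion $C_M\se \langle (\psi\times\psi)(T)\rangle$ (the preimage of a congruence under a frame map is a congruence, and it contains $T\cup K$, hence $C$) is exactly the right way to close the loop. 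It is also consonant with the toolkit the paper does deploy nearby: Lemma \ref{finitarylemma91} is precisely your isomorphism $(\cpu{L})/(K\cup C)\cong ((\cpu{L})/K)/C_M$ together with the observation that quotient maps preserve finitariness of relations, and Proposition \ref{finitaryquotient} runs the same kind of transfer argument in the opposite direction. Two points that a written-up version should make explicit, neither of which is a gap: (i) the identification of $\ker\psi$ with the congruence generated by $\{(\syp{x},\syp{y}):(x^+,y^+)\in C^+\}$ and its negative analogue is the statement that frame coproducts commute with quotients; it follows from the universal properties but deserves a line. (ii) ``generated by a finitary relation'' and ``generated by its restriction to $\fin\times\fin$'' coincide (if $C_M=\langle S\rangle$ with $S$ finitary then $S\se C_M\cap(\fin\times\fin)$, whence $C_M$ is generated by that restriction), which is needed to match the paper's official definition of a finitary congruence. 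With the bookkeeping you already flag --- checking that $C_M$ really is the canonical congruence presenting the main component of the quotient biframe over $(L^+/C^+)\oplus(L^-/C^-)$ --- the proof is complete.
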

The category of finitary biframes is reflective in $\bd{BiFrm}$, and the reflector $\mi{fin}:\bd{BiFrm}\ra \bd{BiFrm_{fin}}$ maps each biframe $\ca{L}$ to the finitary biframe $(L^+,L^-(\cpu{L})/\mi{fin}(C_L))$. By composing the adjunction $\mi{fin}:\bd{BiFrm}\lra \bd{BiFrm_{fin}}:i$ with the adjunction $\bOm:\bd{BiTop}\lra \bd{BiFrm}^{op}:\bot$ for biframes, we obtain an adjunction $\bOm_{\fin}:\bd{BiTop}_{\fin}\lra \bd{BiFrm}^{op}:\bpt$, with $\bpt\dashv \bOm_{\fin}$. This is the adjunction because of which we see finitary biframes as pointfree duals of bitopological spaces.

\section{Bipectra of biquotients}
The results in this section all appear in \cite{suarez2020category}, but we re-state them and sometimes re-prove them here for ease of consultation. For a finitary biframe $\ca{L}$, we say that the biframe quotient $\Lq{R}$ is a \tc{biquotient} if the biframe $\Lq{R}$ is finitary. 
\begin{lemma}\label{witness}
Suppose that $L$ is a frame, and that $R$ is a relation on $L$ and $S$ a relation on $L/R$. Then, suppose that for each equivalence class we have some witness $w(x)\in [x]_R$. The quotient $(L/R)/S$ is the same as the quotient of $L$ by $R\cup \{(w(x),w(y)):([x]_R,[y]_R)\in S\}$.
\end{lemma}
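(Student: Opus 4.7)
The plan is to invoke the universal property of frame quotients to show that $L/T$ and $(L/R)/S$, where $T = R \cup \{(w(x), w(y)) : ([x]_R, [y]_R) \in S\}$, corepresent the same functor on frames and are therefore canonically isomorphic. Concretely, I would produce maps $\alpha : L/T \to (L/R)/S$ and $\beta : (L/R)/S \to L/T$ and verify that they are mutually inverse.

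For $\alpha$, I show that $q_S \circ q_R$ respects $T$. Pairs in $R$ are handled automatically by $q_R$. For a pair $(w(x), w(y))$ with $([x]_R, [y]_R) \in S$, the bridging identity is $q_R(w(x)) = [x]_R$, a direct consequence of $w(x) \in [x]_R$; likewise $q_R(w(y)) = [y]_R$. Hence $q_S(q_R(w(x))) = q_S([x]_R) \le q_S([y]_R)$ by the $S$-compatibility of $q_S$, and the universal property of $L/T$ delivers $\alpha$. For $\beta$, I factor $q_T$ through $q_R$: since $T \supseteq R$, there is a unique $\gamma : L/R \to L/T$ with $\gamma \circ q_R = q_T$. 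The same witness identity shows $\gamma([x]_R) = q_T(w(x))$ and $\gamma([y]_R) = q_T(w(y))$ for any $([x]_R, [y]_R) \in S$, and these satisfy $\le$ in $L/T$ since $(w(x), w(y)) \in T$. Thus $\gamma$ descends through $q_S$ to the desired $\beta$.

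To conclude, both $\alpha \circ \beta$ and the identity on $(L/R)/S$ satisfy $h \circ (q_S \circ q_R) = q_S \circ q_R$, and since $q_S \circ q_R$ is surjective, $h$ is uniquely determined; the reverse composition is handled symmetrically through $q_T$. There is no real obstacle — the proof is a formal manipulation of universal properties. The only subtle ingredient is the repeated use of $q_R(w(x)) = [x]_R$, which is precisely what allows the adjoined pairs $(w(x), w(y))$ in $T$ to encode the relation $S$ on $L/R$ without loss of information, and it depends crucially on the choice of $w(x)$ belonging to the class $[x]_R$ rather than being an arbitrary element of $L$.
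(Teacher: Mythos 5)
Your proof is correct and follows essentially the same route as the paper: both arguments rest on the universal property of quotients and on the key identity that $w(x)\in[x]_R$ forces $q_R(w(x))=[x]_R$ (equivalently, $f(w(x))=f(x)$ for any $f$ respecting $R$), which is what lets the adjoined pairs $(w(x),w(y))$ encode $S$. The only difference is presentational — you build the mutually inverse maps $\alpha$ and $\beta$ explicitly, whereas the paper shows directly that a frame map $f:L\to M$ factors through one quotient if and only if it factors through the other.
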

\begin{proof}
We show that the two quotients are the same by showing that a morphism $f:L\ra M$ factors through the first quotient if and only if it factors through the second. Suppose that $M$ is a frame and that $f:L\ra M$ factors through the quotient $(L/R)/S$. This means that whenever $(a,b)\in R$ we have $f(a)\leq f(b)$, and that, additionally, whenever $([x]_R,[y]_R)\in S$ we have $f(x)\leq f(y)$. We claim that $f$ factors through the second quotient. If we have $(x,y)\in R$, we deduce directly from our assumption that $f(x)\leq f(y)$. If $([x]_R,[y]_R)\in S$, we have that $f(w(x))=f(x)$ and $f(w(y))=f(y)$ as $f$ respects the relation $R$. Then, we have $f(w(x))=f(x)\leq f(y)=f(w(y))$, and so $f$ respects the relation $\{(w(x),w(y)):([x]_R,[y]_R)\in S\}$. Conversely, suppose that $f$ factors through the second quotient. Then, $f$ respects $R$. Suppose that $([x]_R,[y]_R)\in S$. Then, by assumption on $f$, we have that $f(x)=f(w(x))\leq f(w(y))=f(y)$, where the two equalities hold because $f$ respects $R$.
\end{proof}
\begin{lemma}\label{finitarylemma91}
Suppose that $\ca{L}$ is a biframe and $C$ is a congruence on $L$, and $R$ a relation on $L$. We have that $L/(C\cup R)$ is isomorphic to the frame $L/C$ quotiented by $[R]_C:=\{([x]_C,[y]_C):(x,y)\in R\}$. Furthermore, if $R$ is finitary, so is $[R]_C$. 
\end{lemma}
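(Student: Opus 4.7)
The plan is to establish the isomorphism by the universal property of quotients, in the same spirit as Lemma \ref{witness}: I will show that for any frame $M$, a morphism $f:L\ra M$ factors through $L/(C\cup R)$ if and only if it factors through the composite $L\xrightarrow{q_C}L/C\xrightarrow{q_{[R]_C}}(L/C)/[R]_C$. Uniqueness of factorizations then provides the two mutually inverse maps that witness the isomorphism.

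For the nontrivial direction, suppose $f$ respects the relation $C\cup R$. Respecting $C$ means $f$ factors through $q_C$ via a unique $\bar{f}:L/C\ra M$ with $\bar{f}([x]_C)=f(x)$. Now if $([x]_C,[y]_C)\in [R]_C$, then by definition there exist $x',y'$ with $(x',y')\in R$ and $[x']_C=[x]_C$, $[y']_C=[y]_C$; applying $\bar{f}$ and using that $f$ respects $R$ gives $\bar{f}([x]_C)=f(x')\leq f(y')=\bar{f}([y]_C)$. Hence $\bar{f}$ factors through $q_{[R]_C}$. Conversely, if $f=\tilde{f}\circ q_{[R]_C}\circ q_C$ for some $\tilde{f}:(L/C)/[R]_C\ra M$, then $f$ respects $C$, and for any $(x,y)\in R$ we have $(q_C(x),q_C(y))\in [R]_C$ by definition, so $\tilde{f}$ identifies these two elements, giving $f(x)\leq f(y)$. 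Thus $f$ respects $C\cup R$.

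For the finitary claim, I will use that by construction the generators of the quotient biframe $\ca{L}/C$ are precisely the images $\{[\syp{a}]_C:a^+\in L^+\}\cup\{[\sym{a}]_C:a^-\in L^-\}$ of the generators of $L$ under $q_C$. Since $q_C$ is a frame homomorphism, it maps finite joins of finite meets of generators of $L$ to finite joins of finite meets of generators of $L/C$; that is, $q_C[\fin(L)]\se \fin(L/C)$. Therefore, if $R\se\fin(L)\times\fin(L)$, then $[R]_C=\{(q_C(x),q_C(y)):(x,y)\in R\}\se\fin(L/C)\times\fin(L/C)$, so $[R]_C$ is a finitary relation on $L/C$.

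I do not expect a serious obstacle here; the only place requiring a little care is the choice of representatives in showing that $\bar{f}$ respects $[R]_C$ (which must be independent of which pair $(x',y')\in R$ one picks to witness a given pair of equivalence classes), but this is automatic once one observes that $\bar{f}$ is well-defined on $L/C$.
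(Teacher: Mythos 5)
Your proposal is correct and follows essentially the same route as the paper's own proof: both directions are established by showing that a frame map $f:L\ra M$ factors through $L/(C\cup R)$ if and only if it factors through $(L/C)/[R]_C$, with the same handling of representatives for pairs in $[R]_C$, and the finitary claim is settled in both cases by observing that the quotient map $[-]_C$ preserves the frame operations and hence sends finitary elements to finitary elements. No substantive difference.
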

\begin{proof}
Let $\ca{L}$ be a biframe, and let $C$ be a congruence on its main component $L$. We show that a frame morphism $f:L\ra M$ factors through the quotient $L/(C\cup R)$ if and only if it factors through the quotient $(L/C)/[R]_C$. If $f:L\ra M$ factors through the first quotient, it determines a map $\tilde{f}:L/C\ra M$ defined as $\tilde{f}([x]_C)=f(x)$ for every $x\in L$. If $([x]_C,[y]_C)$, then form some $x'\in [x]_C$ and some $y'\in [y]_C$ we have $(x',y')$. Then, by assumption on $f$ we have $f(x')\leq f(y')$ and so $\tilde{f}([x]_C)\leq \tilde{f}([y]_C)$. Then, $\tilde{f}$ respects $[R]_C$ and so $f$ factors through $(L/C)/[R]_C$. For the converse, suppose that we have a map $f:L\ra M$ such that $f(x)=f(y)$ whenever $(x,y)\in C$ and such that the canonical map $\tilde{f}:L/C\ra R$ respects the relation $[R]_C$. If we have $(x,y)\in R$, then we also have $\tilde{f}([x]_C)\leq \tilde{f}([y]_C)$, and since $\tilde{f}\circ[-]_C=f$ this implies that $f(x)\leq f(y)$. For the second part of the claim, we notice that the quotient map $[-]_C:L\ra L/C$ respects the frame operations and so it preserves for elements the property of being finitary.
\end{proof}
\begin{proposition}\label{finitaryquotient}
For a finitary biframe $\ca{L}$, we have that the biframe quotient $\Lq{R}$ is a biquotient if and only if $R$ induces a finitary congruence on $L$.
\end{proposition}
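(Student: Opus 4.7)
The plan is to bridge the two conditions at the level of the coproduct $L^+\oplus L^-$, where by finitariness of $\ca{L}$ we have $L\cong (L^+\oplus L^-)/C_L$ with $C_L$ finitary. The biquotient condition on $\Lq{R}$ asks that the canonical congruence on $(L^+/R^+)\oplus(L^-/R^-)$ whose quotient is $L/R$ is finitary, while the condition on $R$ asks that the congruence $C_R$ on $L$ generated by $R$ is finitary. The strategy is to show that both conditions amount to the finitariness of one and the same congruence on $L^+\oplus L^-$, namely the kernel of $L^+\oplus L^-\ra L/R$, using Lemma~\ref{witness} to lift relations across quotients and Lemma~\ref{finitarylemma91} to push them back down. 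The crucial bookkeeping observation is that every finitary element $\bve_i \syw{x_i}{x_i}$ of $L$ admits the canonical finitary preimage $\bve_i(x_i^+\oplus x_i^-)$ in $L^+\oplus L^-$, so finitary witnesses are always available whenever finitary pairs need to be lifted.

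For the backward implication, I would assume $C_R$ is finitary, pick a generating set of $C_R$ consisting of pairs of finitary elements of $L$, and lift it via finitary witnesses in Lemma~\ref{witness} to a finitary relation $S\se (L^+\oplus L^-)\times(L^+\oplus L^-)$ with $L/R\cong (L^+\oplus L^-)/(C_L\cup S)$. Since $C_L\cup S$ is then finitary, Lemma~\ref{finitarylemma9} yields that $(L^+,L^-,L^+\oplus L^-)/(C_L\cup S)$ is a finitary biframe, and a direct check identifies this biframe with $\Lq{R}$.

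For the forward implication, I would let $K$ be the congruence on $L^+\oplus L^-$ with $(L^+\oplus L^-)/K\cong (L^+/R^+)\oplus(L^-/R^-)$; it is finitary, being generated by pairs of generators. Lifting via the same witness argument a finitary generating set of the (finitary, by hypothesis) canonical congruence giving $L/R$ produces a finitary $W$ with $L/R\cong (L^+\oplus L^-)/(K\cup W)$ and $K\cup W$ finitary. Since $L^+\oplus L^-\ra L/R$ factors through $L$, the congruence generated by $K\cup W$ contains $C_L$, so Lemma~\ref{finitarylemma91} gives $L/R\cong L/[K\cup W]_{C_L}$ with $[K\cup W]_{C_L}$ finitary by the ``furthermore'' part of that lemma. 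By uniqueness of the kernel of $L\ra L/R$, this pushed-down congruence coincides with $C_R$, and so $C_R$ is finitary.

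The main obstacle is not any single hard step but the finitariness bookkeeping across Lemma~\ref{witness}: one must ensure that the witness choices can consistently be made finitary, which is exactly what the canonical preimage observation in the first paragraph secures. Everything else is a direct application of the iterated quotient formulas.
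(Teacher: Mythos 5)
Your proposal is correct and follows essentially the same route as the paper: both directions are mediated through the coproduct $L^+\oplus L^-$, lifting a finitary generating relation along the quotient via finitary witnesses (Lemma~\ref{witness}), then transferring finitariness upward with Lemma~\ref{finitarylemma9} for one implication and pushing it back down with Lemma~\ref{finitarylemma91} for the other. The only cosmetic difference is the choice of witnesses --- the paper takes the join of all positive (resp.\ negative) generators in an equivalence class, while you lift a chosen finitary representation --- and both choices serve the same purpose.
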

\begin{proof}
Suppose that $R$ induces a finitary congruence on $L$. We need to show that the biframe $(L^+/R^+,L^-/R^-,L/R)$ is finitary, and this holds by Lemma \ref{finitarylemma9}. For the converse, suppose that $\ca{L}/R$ is finitary. Then, its third component is the quotient of the frame $(\cpu{L})/(\syp{R}\cup \sym{R})$ by a congruence induced by some finitary relation $R':=\{([\syp{x_j}]_R\we[\sym{x_j}]_R,[\syp{y_j}]_R\ve [\sym{y_j}]_R):j\in J\}$. Here $[-]_R$ denotes an equivalence class corresponding to the congruence induced by $\syp{R}\cup \sym{R}$. select witnesses canonically by defining for each equivalence class of the form $[\syp{x}]_R$ the element $w(\syp{x})=\bve \{\syp{a}:\syp{a}\in [\syp{x}]_R\}$. We define similarly the element $w(\sym{x})$ for some $x^-\in L^-$. We notice that by our choice of the witnesses we have that the relation $w[R']=\{(w([x]_R),w([y]_R)):([x]_R,[y]_R)\in R'\}$ on $\cpu{L}$ finitary. Then, the quotient $(\cpu{L})/(\syp{R}\cup \syp{R}\cup w[R'])$ is obtained by quotienting $(L^+,L^-,\cpu{L})$ by some finitary congruence $C$ larger than $C_L$. This finitary congruence may be written in canonical form.
\[
C=\bve \{\na(\sy{a^+_i})\cap \na(\sy{a^-_i})\cap \del(\sy{b^+_i})\cap \del(\sy{b^-_i}):i\in I \}
\]
of all congruences of the form $\na(a^+)\cap \na(a^-)\cap \del(b^+)\cap \del(b^-)$ below it. Since the congruence $C_L$ is finitary (by our initial assumption on the biframe $\ca{L}$) and is contained in $C$, we must have that $C_L=\bve _{j\in J}\na(\sy{a^+_j})\cap \na(\sy{a^-_j})\cap \del(\sy{b^+_j})\cap \del(\sy{b^-_j})$ for some $J\se I$. Now, we may write
\[
C=C_L\ve \bve _{i\in I{\sm}J}\na(\sy{a^+_i})\cap \na(\sy{a^-_i})\cap \del(\sy{b^+_i})\cap \del(\sy{b^-_i}).
\]
By Lemma \ref{finitarylemma91}, this is the same as the quotient of $(\cpu{L})/C_L$ by a finitary congruence.

\end{proof}
We call $\mf{S}(\ca{L})$ the ordered collection of biquotients of a finitary biframe $\ca{L}$. We set $\Lq{R}\leq \Lq{S}$ if and only if $S$ induces on $L$ a congruence smaller than that induced by $R$.
From this and from Lemma \ref{finitaryquotient} it follows that the lattice of biquotients of a finitary biframe is anti-isomorphic to the collection of finitary congruences on $L$, ordered under set inclusion. Recall that for a frame $L$ the congruence induced by a relation $R$ is the congruence $\bve \{\na(x)\cap \na(y):(x,y)\in R\}$. We recall that a congruence is finitary if it is induced by inequalities of the form $\syw{x}{x}\leq \syv{y}{y}$ -- that is, by a relation of the form $\{(\syw{x_i}{x_i},\syv{y_i}{y_i}):i\in I\}$. As the map $\na:L\ra \mf{A}(L)$ preserves the frame operations while $\del$ reverses them, this means that the finitary congruences on $L$ are exactly those of the form 
\begin{equation}\label{finitarycongruence}
 \bve_i \na(\syp{x_i})\cap \na(\sym{x_i})\cap\del(\syp{y_i})\cap\del(\sym{y_i}).   
\end{equation}

From this fact alone, one sees that the finitary congruences form a subframe of the ordered collection $\mf{A}(L)$ of all congruences on $L$.
We call $\mf{A}_{\fin}(L)$ the collection of finitary congruences of $L$. We now present several different characterizations of the frame $\mf{A}_{\fin}(L)$. We call $\mf{A}_{\fin}(L)$ the \tc{finitary assembly} of a frame $L$.

\begin{lemma}\label{manysubframes6} For any biframe $\ca{L}$, the following are all descriptions of the finitary assembly of a frame $L$.
\begin{enumerate}
    \item The subframe of $\mf{A}(L)$ generated by all closed congruences of $L$ and the open ones of the form $\del (\sy{x^+}\ve \sy{x^-})$.
    
     \item The subframe of $\mf{A}(L)$ generated by the collection $
    \{\na (\sy{x^+}):x^+\in L^+\}\cup \{\del (\sy{x^-}):x^-\in L^-\}\cup \{\na (\sy{x^-}):x^-\in L^-\}\cup \{\del (\sy{x^+}):x^+\in L^+\}$.
   
\end{enumerate}
\end{lemma}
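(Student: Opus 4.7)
The plan is to show that each of the two subframes in question coincides with $\mf{A}_{\fin}(L)$, using the explicit canonical form (\ref{finitarycongruence}) as the bridge. The key arithmetic consists of the standard identities in $\mf{A}(L)$, namely $\na(x\we y)=\na(x)\cap \na(y)$, $\na(x\ve y)=\na(x)\ve \na(y)$, $\del(x\we y)=\del(x)\ve \del(y)$, and $\del(x\ve y)=\del(x)\cap \del(y)$, together with the fact recorded just above the statement that $\mf{A}_{\fin}(L)$ is itself a subframe of $\mf{A}(L)$.

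For description (2), call the subframe $S_2$. Every finitary congruence has the form $\bve_i \na(\syp{x_i})\cap \na(\sym{x_i})\cap \del(\syp{y_i})\cap \del(\sym{y_i})$ by (\ref{finitarycongruence}), which is visibly a join of finite meets of generators of $S_2$, so $\mf{A}_{\fin}(L)\se S_2$. Conversely, each of the four generating families is itself a collection of single-summand instances of the canonical form (where the unused factors are $\na(1)$ or $\del(0)$, both equal to the top of $\mf{A}(L)$), so the generators are already finitary congruences; since $\mf{A}_{\fin}(L)$ is a subframe, this yields $S_2\se \mf{A}_{\fin}(L)$.

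For description (1), call the subframe $S_1$, and show $S_1=S_2$. Each closed generator $\na(\syp{x})$ or $\na(\sym{x})$ of $S_2$ is already a closed congruence on $L$ (since $\syp{x},\sym{x}\in L$), so it lies in $S_1$. For the open generators, note that $\sy{0^-}=0$ in $L$, so $\del(\syp{x})=\del(\syp{x}\ve \sy{0^-})$ is already of the special form $\del(\sy{x^+}\ve \sy{x^-})$; symmetrically $\del(\sym{x})=\del(\sy{0^+}\ve \sym{x})$. Hence $S_2\se S_1$. For the reverse inclusion, any closed congruence $\na(a)$ with $a=\bve_j \syp{a_j}\we \sym{a_j}$ decomposes via the identities above into $\bve_j(\na(\syp{a_j})\cap \na(\sym{a_j}))$, a join of finite meets from $S_2$; and $\del(\syp{x}\ve \sym{x})=\del(\syp{x})\cap \del(\sym{x})$ is likewise a meet from $S_2$. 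So $S_1\se S_2$, giving $S_1=S_2=\mf{A}_{\fin}(L)$.

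The only real obstacle is notational bookkeeping: one has to track that $\na$ turns meets and joins of $L$ into meets and joins of $\mf{A}(L)$ while $\del$ swaps them, and use the small trick $\syp{x}=\syp{x}\ve \sy{0^-}$ to bring single-variable generators like $\del(\syp{x})$ into the binary form $\del(\sy{x^+}\ve \sy{x^-})$ demanded by description (1). Beyond this, the proof is immediate from (\ref{finitarycongruence}).
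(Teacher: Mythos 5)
Your proof is correct and rests on the same ingredients as the paper's: the canonical form (\ref{finitarycongruence}) of finitary congruences, the fact that $\na$ preserves and $\del$ reverses the frame operations, and the biframe generation property $a=\bve_j\syp{a_j}\we\sym{a_j}$. The only difference is organizational (you prove description (2) first and reduce (1) to it by showing $S_1=S_2$, whereas the paper computes the closure for (1) directly and treats (2) as immediate), and your version spells out details -- such as padding generators with $\na(1)$ and $\del(0)$ and rewriting $\del(\syp{x})$ as $\del(\syp{x}\ve\sym{0})$ -- that the paper leaves implicit.
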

\begin{proof}
Let us prove each item in turn.
\begin{enumerate}
    \item If we close the collection $\{\del(x):x\in L\}\cup \{\na(\syv{x}{x}):x^+\in L^+,x^-\in L^-\}$ under finite meets we obtain the collection of congruences of the form $\na(\bve_i\syw{x_i}{x_i})\cap \del(\syv{x}{x})=\bve_i \na(\syp{x_i})\cap \na(\sym{x_i})\cap \del(\syp{x})\cap \del(\sym{x})$, and if we close the collection under arbitrary joins we obtain the collection of congruences of the form $\na(\syp{x_i})\cap \na(\sym{x_i})\cap\del(\syp{y_i})\cap\del(\sym{y_i})$.
    \item This is clear from the fact that finitary congruences are precisely those of the form described in \ref{finitarycongruence}.\qedhere
\end{enumerate}
\end{proof}

For a biframe $\ca{L}$, we call a congruence on $L$ a \tc{finitary fitted} one if it is of the form $\bve_i \del(\syv{x_i}{x_i})$. We observe that finitary fitted congruences form a subframe of $\mf{A}_{fin}(L)$. 
Let us now begin our exploration of bispectra of biquotients. Because of the universal property of biquotients, whenever we have a finitary biframe $\ca{L}$ and a finitary quotient $\Lq{R}$, we may view the points of $\Lq{R}$ as the points $\ca{L}\ra \bd{2}$ which factor through the relation $R$. Because of this, the bispectrum of $\Lq{R}$, too, may be viewed as a bisubspace of $\bpt(\ca{L})$. Let us make this precise.
\begin{proposition}\label{spectraofquotients}
Suppose that $\ca{L}$ is a biframe and $\Lq{R}$ a quotient. The bispace $\bpt(\Lq{R})$ is bihomeomorphic to the bisubspace 
\[
\{f\in \bpt(\ca{L}):f\mb{ factors through }R\}\se \bpt(\ca{L}).
\]
\end{proposition}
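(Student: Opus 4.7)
The plan is to use the universal property of the biframe quotient at the level of points, and then check that the bitopologies transfer correctly under the resulting bijection.

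First I would define the candidate correspondence $\Phi: \bpt(\Lq{R}) \to \bpt(\ca{L})$ by sending a point $g: \Lq{R} \to \bd{2}$ to the composite $g \circ q_R$, where $q_R: \ca{L} \to \Lq{R}$ is the quotient biframe map. Since biframe quotients amount to quotients of the main component, the universal property stated in the preliminaries applies: a biframe map $f: \ca{L} \to \bd{2}$ factors (uniquely) through $q_R$ if and only if it respects $R$, i.e.\ if and only if it belongs to the subset $\{f \in \bpt(\ca{L}) : f \text{ factors through } R\}$. Hence $\Phi$ is a well-defined bijection onto this subset.

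Next I would verify that $\Phi$ is a bihomeomorphism, by checking the two subbases. The positive subbasic opens of $\bpt(\Lq{R})$ are the sets $\{g \in \bpt(\Lq{R}) : g^+([\syp{a}]_R) = 1\}$ for $a^+ \in L^+$, since the positive component of $\Lq{R}$ is $L^+/R^+ = \{[\syp{a}]_R : a^+ \in L^+\}$. Under $\Phi$ such a set corresponds to
\[
\{f \in \bpt(\ca{L}) : f^+(a^+)=1\} \cap \mathrm{im}(\Phi),
\]
i.e.\ the intersection of a positive subbasic open of $\bpt(\ca{L})$ with $\mathrm{im}(\Phi)$. The analogous computation on the negative side is symmetric. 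This shows simultaneously that the positive and negative topologies of $\bpt(\Lq{R})$ coincide with the corresponding subspace topologies inherited from $\bpt(\ca{L})$, so $\Phi$ is a bihomeomorphism onto its image.

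The main obstacle, as far as I can see, is purely bookkeeping: one must keep careful track of the identification of the positive generators $[\syp{a}]_R \in L^+/R^+$ with the equivalence classes of the syntactic generators $\syp{a} \in L$, and confirm that evaluating $g \circ q_R$ at $\syp{a}$ gives the same truth value as evaluating $g$ at $[\syp{a}]_R$. Once this identification is in place, the whole argument reduces to a direct reading of the universal property of the biframe quotient at the level of points.
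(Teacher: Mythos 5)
Your proposal is correct and follows essentially the same route as the paper: the paper defines the same bijection (there written $\tilde{f}\mapsto f$, which is exactly your $g\mapsto g\circ q_R$) via the universal property of the quotient, and then checks that a typical positive subbasic open of $\bpt(\Lq{R})$ corresponds to the trace of a positive subbasic open of $\bpt(\ca{L})$ on the image. The bookkeeping point you flag about identifying $g^+([\syp{a}]_R)$ with $f^+(a^+)$ is precisely the step the paper discharges by the universal property.
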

\begin{proof}
Define the map $\rho:\bpt(\Lq{R})\ra\{f\in \bpt(\ca{L}):f\mb{ factors through }R\}\se \bpt(\ca{L})$ as $\tilde{f}\mapsto f$, that is, as the canonical bijection between these two sets given by the universal property of the quotient. We need to show that these two bispaces have the same bitopology. A positive open of $\bpt(\Lq{R})$ is a set of the form $\{\tilde{f}\in \bpt(\Lq{R}):\tilde{f}^+(a^+)=1\}$ for some $a^+\in L^+$. By the universal property of quotients we have $\tilde{f}^+(a^+)=1$ if and only if $f^+(a^+)=1$ and so the image of a typical positive open is $\{f\in \bpt(\ca{L}):f^+(a^+)=1\}\cap \{f\in \bpt(\ca{L}):f\mb{ factors through }R\}$, which is a typical positive open of $\{f\in \bpt(\ca{L}):f\mb{ factors through }R\}\se \bpt(\ca{L})$. Then, the positive opens of the second bispace are precisely the images under $\rho$ of the positive opens of the first.
\end{proof}

From now on, we adopt the convention of identifying spectra of biquotients $\Lq{R}$ with bisubspaces of $\bpt(\ca{L})$. The first matter we will explore is to study the system of the bispectra of finitary biquotients of a finitary biframe $\ca{L}$, seen as a system of bisubspaces of $\bpt(\ca{L})$. We will see that these bispectra coincide with the patch-closed sets of a certain natural bitopology on $|\bpt(\ca{L})|$. 
\begin{proposition}\label{manyfacts9}
For a finitary biframe $\ca{L}$, we have the following facts.
\begin{enumerate}
    \item $|\bpt(\Lq{\bve_i C_i})|=\bca_i |\bpt(\Lq{C_i})|$ for any collection $C_i$ of finitary congruences on $L$.
    \item $|\bpt(\ca{L}/\del(\syp{a}))|=\va{L}{+}{a}$.
    \item $|\bpt(\Lq{\na(\syp{a})})|=\va{L}{+}{a}^c$.
    \item $|\bpt(\ca{L}/(\less{\syw{x}{x}}{\syv{y}{y}}))|=\va{L}{+}{x}^c\cup \va{L}{-}{x}^c\cu\va{L}{+}{y}\cu\va{L}{-}{y}$.
\end{enumerate}
\end{proposition}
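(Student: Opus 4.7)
The plan is to lean throughout on Proposition~\ref{spectraofquotients}, which identifies $|\bpt(\Lq{R})|$ with the subset of $\bpt(\ca{L})$ consisting of those points that factor through $R$. Each of the four items then reduces to a direct translation between a factoring condition and one of the sets $\va{L}{+}{a},\va{L}{-}{a}$ or its complement. For item~(1), I would use that $\bve_i C_i$ is the smallest congruence containing every $C_i$; by the universal property of the quotient, a frame map $f:L\to \bd{2}$ respects $\bve_i C_i$ if and only if it respects each $C_i$, so after invoking Proposition~\ref{spectraofquotients} the spectrum of $\Lq{\bve_i C_i}$ is exactly $\bca_i |\bpt(\Lq{C_i})|$.

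For items~(2) and~(3) I would simply unpack the defining universal properties of the basic open and closed congruences. The quotient by $\del(\syp{a})$ is the universal one forcing $\syp{a}=1$, so $f:L\to \bd{2}$ factors through $\del(\syp{a})$ iff $f^+(a^+)=1$, i.e.\ iff $f\in \va{L}{+}{a}$; dually the quotient by $\na(\syp{a})$ forces $\syp{a}=0$, so $f$ factors iff $f^+(a^+)=0$, which means $f\in \va{L}{+}{a}^c$.

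Item~(4) is the genuine content. Setting $C=\less{\syw{x}{x}}{\syv{y}{y}}=\na(\syw{x}{x})\cap \del(\syv{y}{y})$, I would show that $f$ factors through $C$ iff $f(\syw{x}{x})\leq f(\syv{y}{y})$ in $\bd{2}$. One direction is routine: if $f(\syw{x}{x})=0$ then $f$ respects the larger congruence $\na(\syw{x}{x})\supseteq C$, and dually if $f(\syv{y}{y})=1$ then $f$ respects $\del(\syv{y}{y})\supseteq C$. For the converse, I would exhibit the pair $(\syw{x}{x},\,\syw{x}{x}\wedge \syv{y}{y})$; a direct check shows that it lies in both $\na(\syw{x}{x})$ and $\del(\syv{y}{y})$, so if $f$ respects $C$ then $f(\syw{x}{x}) = f(\syw{x}{x})\wedge f(\syv{y}{y})$, giving $f(\syw{x}{x})\leq f(\syv{y}{y})$. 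In $\bd{2}$ this inequality is equivalent to $f(\syw{x}{x})=0$ or $f(\syv{y}{y})=1$; expanding $f(\syw{x}{x}) = f^+(x^+)\wedge f^-(x^-)$ and $f(\syv{y}{y}) = f^+(y^+)\vee f^-(y^-)$ and translating to $\varphi$-notation produces exactly the union $\va{L}{+}{x}^c\cup \va{L}{-}{x}^c\cup \va{L}{+}{y}\cup \va{L}{-}{y}$.

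The only real obstacle is the $\Rightarrow$ direction in item~(4): the entire argument hinges on recognising the specific witness pair $(a,\,a\wedge b)\in \na(a)\cap \del(b)$. Once that is in place, the remaining steps are a matter of unwinding definitions and elementary Boolean bookkeeping in $\bd{2}$.
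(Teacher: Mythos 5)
Your proposal is correct and follows essentially the same route as the paper: both reduce every item to Proposition~\ref{spectraofquotients} and then translate the condition of factoring through the relevant congruence into a condition on $f^+$ and $f^-$. The only difference is that you explicitly justify (via the witness pair $(\syw{x}{x},\,\syw{x}{x}\we\syv{y}{y})$) the equivalence ``$f$ factors through $\less{\syw{x}{x}}{\syv{y}{y}}$ iff $f(\syw{x}{x})\leq f(\syv{y}{y})$'', a fact the paper simply asserts in item~(4).
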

\begin{proof}
Let us prove each item in turn. 
\begin{enumerate}
\item By Proposition \ref{spectraofquotients}, we may make the identification $|\bpt(\ca{L}/\bve_i C_i)|=\{f\in \bpt(\ca{L}):f\mb{ factors through each }C_i\}$ and $\bpt(\ca{L}/C_i)=\{f\in \bpt(\ca{L}):f\mb{ factors through }C_i\}$. The desired result follows.
\item By Proposition \ref{spectraofquotients}, once again we may make the identification $|\bpt(\ca{L}/\del(\syp{a}))|=\{f\in \bpt(\ca{L}):f\mb{ factors through }\del(\syp{a})\}$, and by definition of open congruences this is the same as $\{f\in \bpt(\ca{L}):f^+(a^+)=1\}$, which exactly the open $\va{L}{+}{a}$.
\item The argument is similar to that of item (2); here we use the fact that a map $f:\ca{L}\ra \bd{2}$ factors through a closed congruence $\na(\syp{a})$ if and only if $f^+(a^+)=0$.
\item The argument is again similar; here we use the fact that a map $f:\ca{L}\ra \bd{2}$ factors through $\less{\syw{x}{x}}{\syv{y}{y}}$ if and only if we have $f(\syw{x}{x})\leq f(\syv{y}{y})$, and that this holds if and only if we have either $f(\syw{x}{x})=0$ or $f(\syv{y}{y})=1$, and that this holds if and only if we have either $f^+(x^+)=0$, or $f^-(x^-)=0$, or $f^+(y^+)=1$, or $f^-(y^-)=1$.\qedhere
\end{enumerate}
\end{proof}
We are now ready to describe the bispectrum of an arbitrary biquotient.
\begin{proposition}
Suppose that $\ca{L}$ is a finitary biframe and that $C$ is a finitary congruence on $L$. Then, the underlying set of $\bpt(\Lq{C})$ is 
\[
\bca\{\va{L}{+}{a}^c\cup \va{L}{+}{b}\cup \va{L}{-}{a}^c\cup \va{L}{-}{b}:\less{a^+}{b^+}\cap \less{a^-}{b^-}\se C\}
\]
\end{proposition}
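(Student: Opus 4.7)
The plan is to decompose the arbitrary finitary congruence $C$ into a join of \emph{elementary} finitary congruences (ones of the form $\nabla(\syp{a})\cap\nabla(\sym{a})\cap\Delta(\syp{b})\cap\Delta(\sym{b})$) and then apply items (1) and (4) of Proposition \ref{manyfacts9} in succession. Since the result is essentially an unpacking of these earlier items, the whole argument is short and the only genuine step is justifying the decomposition.

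First, I would invoke the canonical form \eqref{finitarycongruence}, which writes $C$ as $\bve_i \nabla(\syp{a_i})\cap\nabla(\sym{a_i})\cap\Delta(\syp{b_i})\cap\Delta(\sym{b_i})$. Each joinand is of the form $\less{\syp{a}}{\syp{b}}\cap\less{\sym{a}}{\sym{b}}$ and lies below $C$, so $C$ in fact coincides with the join of \emph{all} elementary congruences of this shape contained in it. I would also observe the identity
\[
\less{\syw{a}{a}}{\syv{b}{b}}=\nabla(\syp{a})\cap\nabla(\sym{a})\cap\Delta(\syp{b})\cap\Delta(\sym{b})=\less{\syp{a}}{\syp{b}}\cap\less{\sym{a}}{\sym{b}},
\]
which holds because $\nabla$ preserves binary meets while $\Delta$ turns binary joins into meets. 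This is the bridge that lets me apply Proposition \ref{manyfacts9}(4).

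Next, applying Proposition \ref{manyfacts9}(1) to the join expression for $C$ above yields
\[
|\bpt(\Lq{C})|=\bca\{|\bpt(\Lq{\less{\syp{a}}{\syp{b}}\cap\less{\sym{a}}{\sym{b}}})|:\less{\syp{a}}{\syp{b}}\cap\less{\sym{a}}{\sym{b}}\se C\},
\]
and then applying Proposition \ref{manyfacts9}(4) (via the identity above) to each term in the intersection replaces each $|\bpt(\Lq{\less{\syp{a}}{\syp{b}}\cap\less{\sym{a}}{\sym{b}}})|$ with $\va{L}{+}{a}^c\cup\va{L}{+}{b}\cup\va{L}{-}{a}^c\cup\va{L}{-}{b}$. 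Substituting gives the stated formula.

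There is no real obstacle: the only mild subtlety is the notational abuse of writing $\less{a^+}{b^+}$ when strictly one means $\less{\syp{a}}{\syp{b}}$ (since $\nabla,\Delta$ act on $L$, not $L^+$), but this is already implicit in how the statement is phrased and in the similar abuse already present in Proposition \ref{manyfacts9}.
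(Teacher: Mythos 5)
Your proof is correct and follows essentially the same route as the paper: write $C$ as the join of all elementary finitary congruences $\na(\syp{a})\cap\na(\sym{a})\cap\del(\syp{b})\cap\del(\sym{b})$ contained in it, then apply items (1) and (4) of Proposition \ref{manyfacts9}. The only difference is that you spell out the bridging identity $\less{\syw{a}{a}}{\syv{b}{b}}=\na(\syp{a})\cap\na(\sym{a})\cap\del(\syp{b})\cap\del(\sym{b})$ and the justification that $C$ equals the join of all elementary congruences below it, both of which the paper leaves implicit.
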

\begin{proof}
An arbitrary finitary congruence $C$ on $L$ is of the form $\bve \{ \na(\syp{x}\we\sym{x})\cap\del(\syp{y}\ve\sym{y}):\na(\syp{x})\cap \na(\sym{x})\cap\del(\syp{y})\cap\del(\sym{y})\se C\}$. The result follows from this fact and from items (1) and (4) of Proposition \ref{manyfacts9}. 
\end{proof}

In \cite{suarez2020category}, the following fact is shown.

\begin{lemma}\label{bisobersarequotients}
For a finitary biframe $\ca{L}$, any bisober bisubspace inclusion $\bpt(\ca{M})\inclu \bpt(\ca{L})$ is, up to bihomeomorphism, and inclusion of the form $\bpt(\Lq{C})\se \bpt(\ca{L})$ for some finitary congruence $C$ on $L$. 
\end{lemma}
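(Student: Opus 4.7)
The plan is to transpose the bisubspace inclusion across the adjunction between bispaces and finitary biframes, obtaining a biframe surjection, and then to recognize its kernel as a finitary congruence by invoking Lemma \ref{finitaryquotient}. Transposing $j:\bpt(\ca{M})\inclu \bpt(\ca{L})$ across the adjunction yields a biframe homomorphism $\tilde{j}:\ca{L}\ra \bOm_{\fin}(\bpt(\ca{M}))$ that sends a positive generator $a^+$ to the restriction $\va{L}{+}{a}\cap \bpt(\ca{M})$, and analogously on negative generators. Since $\bpt(\ca{M})$ carries the subspace bitopology inherited from $\bpt(\ca{L})$, every positive open of $\bpt(\ca{M})$ arises in the form $\va{L}{+}{a}\cap \bpt(\ca{M})$ for some $a^+\in L^+$, so $\tilde{j}^+$ is surjective onto the positive opens of $\bpt(\ca{M})$; the same argument handles the negative side.

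Because the main component of any biframe is generated, under finite meets and arbitrary joins, by the images of its positive and negative components, the surjectivity of $\tilde{j}^+$ and $\tilde{j}^-$ upgrades to surjectivity of $\tilde{j}$ on the main component $L$. Let $C$ denote the kernel congruence of $\tilde{j}$ on $L$; then $\tilde{j}$ factors through an isomorphism $\ca{L}/C\cong \bOm_{\fin}(\bpt(\ca{M}))$. As $\bOm_{\fin}$ lands in the category of finitary biframes, the quotient $\ca{L}/C$ is itself finitary, and by Lemma \ref{finitaryquotient} the congruence $C$ must be finitary. Hence $\ca{L}/C$ is a biquotient of $\ca{L}$.

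Applying $\bpt$ to the isomorphism $\ca{L}/C\cong \bOm_{\fin}(\bpt(\ca{M}))$ produces a bihomeomorphism $\bpt(\ca{L}/C)\cong \bpt(\bOm_{\fin}(\bpt(\ca{M})))$, and bisobriety of $\bpt(\ca{M})$ yields a further bihomeomorphism $\bpt(\ca{M})\cong \bpt(\bOm_{\fin}(\bpt(\ca{M})))$ via the counit. Composing the two gives $\bpt(\ca{M})\cong \bpt(\ca{L}/C)$, and naturality of the unit and counit ensures that this bihomeomorphism intertwines the two inclusions into $\bpt(\ca{L})$, which is the equality of bisubspace inclusions up to bihomeomorphism that the statement asks for. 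The main obstacle I expect is the surjectivity step: while surjectivity on positive and negative generators of $\bOm_{\fin}(\bpt(\ca{M}))$ is immediate from the subspace description, promoting this to surjectivity on the main component requires carefully invoking the fact that the main component is finitely generated, as a biframe, from these two subframes, together with the observation that $\tilde{j}$ preserves all biframe operations.
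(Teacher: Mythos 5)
The paper does not actually prove this lemma: it is imported from \cite{suarez2020category} with the words ``the following fact is shown,'' so there is no in-text argument to compare yours against. Judged on its own terms, your proof is correct and is the argument one would expect: transpose the inclusion across the adjunction $\bpt\dashv\bOm_{\fin}$, note that the transpose $\tilde{j}:\ca{L}\ra \bOm_{\fin}(\bpt(\ca{M}))$ is surjective on the positive and negative components precisely because the bisubspace carries the subspace bitopology, promote this to surjectivity on the main component (every element there is $\bve_i e^+(u^+_i)\we e^-(u^-_i)$ and $\tilde{j}$ commutes with the two injections), and take the kernel congruence $C$; bisobriety is used exactly where it must be, to invert the unit $\bpt(\ca{M})\ra \bpt(\bOm_{\fin}(\bpt(\ca{M})))$, and the triangle identity together with $\tilde{j}=\iota\circ q_C$ gives the compatibility with the two inclusions in the sense of Proposition \ref{spectraofquotients}. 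Two small points deserve an explicit line. First, the isomorphism $\Lq{C}\cong \bOm_{\fin}(\bpt(\ca{M}))$ must be an isomorphism of \emph{biframes}, not merely of main components; this holds because the surjection $\tilde{j}^+$ carries $L^+$ onto the positive component, so the subframe $\{[\syp{x}]_C:x^+\in L^+\}$ of $L/C$ is carried exactly onto the image of the positive injection, and likewise on the negative side. Second, to invoke Proposition \ref{finitaryquotient} you need that finitariness is invariant under biframe isomorphism, so that $\Lq{C}$, being isomorphic to the finitary biframe $\bOm_{\fin}(\bpt(\ca{M}))$, is itself finitary; this is true because a biframe isomorphism sends generators to generators and hence finitary elements to finitary elements, but it is worth saying. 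With those two remarks added, the surjectivity step you flagged as the main obstacle is indeed unproblematic and the proof is complete and non-circular (it does not rely on Corollary \ref{generalbisobersub}, which the paper derives from this lemma).
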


This gives us a way of describing an arbitrary bisober bisubspace of a spectrum $\bpt(\ca{L})$ for some finitary biframe $\ca{L}$.
\begin{corollary}\label{generalbisobersub}
If $\ca{L}$ is a finitary biframe, the bisober bisubspaces of $\bpt(\ca{L})$ are exactly those whose underlying sets are of the form 
\[
\bca_i \va{L}{+}{a_i}^c\cup \va{L}{+}{b_i}\cup \va{L}{-}{a_i}^c\cup \va{L}{-}{b_i}
\]
for some $a^+,b^+\in L^+$ and some $a^-,b^-\in L^-$.
\end{corollary}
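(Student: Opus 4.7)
The claim is essentially a direct assembly of Lemma \ref{bisobersarequotients} with the formula computed in the proposition immediately preceding the corollary, together with the building blocks in Proposition \ref{manyfacts9}. So the plan is to prove both inclusions of the equivalence separately, starting from the fact that bisober bisubspaces of $\bpt(\ca{L})$ are, up to bihomeomorphism, spectra of biquotients.

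For the forward implication, let $\bpt(\ca{M})\inclu \bpt(\ca{L})$ be an arbitrary bisober bisubspace. By Lemma \ref{bisobersarequotients}, it is of the form $\bpt(\ca{L}/C) \se \bpt(\ca{L})$ for some finitary congruence $C$ on $L$. Applying the preceding proposition, its underlying set is
\[
\bca\{\va{L}{+}{a}^c\cup \va{L}{+}{b}\cup \va{L}{-}{a}^c\cup \va{L}{-}{b}: \less{a^+}{b^+}\cap \less{a^-}{b^-}\se C\},
\]
which is a set of exactly the advertised shape, indexed over the generating pairs contained in $C$.

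For the converse, starting from an arbitrary family of pairs $(a_i^+, b_i^+) \in L^+\times L^+$ and $(a_i^-, b_i^-) \in L^-\times L^-$, I would form the finitary congruence
\[
C := \bve_i \less{\syw{a_i}{a_i}}{\syv{b_i}{b_i}},
\]
which is finitary since it is a join of finitary congruences (of the form displayed in equation \ref{finitarycongruence}). By item (4) of Proposition \ref{manyfacts9}, each summand has spectrum $\va{L}{+}{a_i}^c\cup \va{L}{-}{a_i}^c\cup \va{L}{+}{b_i}\cup \va{L}{-}{b_i}$, and by item (1) the spectrum of the join is the intersection of these, which is exactly the prescribed set. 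Hence the set is the underlying set of $\bpt(\ca{L}/C)$, and is therefore a bisober bisubspace of $\bpt(\ca{L})$ (being the bispectrum of a finitary biframe).

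There is no real obstacle here; the corollary is a bookkeeping consequence of the preceding two results. The only point worth double-checking is that every finitary congruence really does have a generating family in the normal form required by the preceding proposition — but this is already guaranteed by the canonical form \ref{finitarycongruence} and was invoked in the proof of that proposition, so we may use it here without further work.
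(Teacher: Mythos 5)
Your proof is correct and follows essentially the same route as the paper: both directions reduce the statement to Lemma \ref{bisobersarequotients} together with items (1) and (4) of Proposition \ref{manyfacts9} (the paper cites item (4) directly where you route through the unnumbered proposition that itself packages those items, which is an immaterial difference). Your explicit construction of the congruence $\bve_i \less{\syw{a_i}{a_i}}{\syv{b_i}{b_i}}$ for the converse is exactly the congruence the paper writes down.
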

\begin{proof}
The bisober bisubspaces of $\bpt(\ca{L})$, by Lemma \ref{bisobersarequotients}, may be identified with the bisubspaces of $\bpt(\ca{L})$ of the form $\bpt(\Lq{C})$ for some finitary congruence $C$ on $L$. By item (4) of Proposition \ref{manyfacts9}, we know that all bisober bisubspaces are of the form described in the statement. For the converse, we observe that by the same item every set of the form $\bca_i \va{L}{+}{a_i}^c\cup \va{L}{+}{b_i}\cup \va{L}{-}{a_i}^c\cup \va{L}{-}{b_i}$ is the bispectrum of the quotient of $\ca{L}$ by 
\[
\bve_i \na(\syp{x_i})\cap \na(\sym{x_i})\cap\del(\syp{y_i})\cap\del(\sym{y_i}).\qedhere
\]
\end{proof}
The bisober bisubspaces of a bisober bispace $\bpt(\ca{L})$, then, are a subcoframe of the powerset $\ca{P}(\bpt(\ca{L}))$. Thus, their underlying sets are precisely the closed sets of some topology on $|\bpt(\ca{L})|$. In the next section we shall see that this topology is the patch topology of the bitopological analogue of the Skula space of a space.

\section{Skula bispaces}

We introduce bitopological analogues of the Skula topology for a space. Every bispace $X$ determines two topological spaces, namely $(X,\Om^+(X))$ and $(X,\Om^-(X))$. Each of these spaces has a corresponding Skula space, we call these the \textit{positive} Skula space of $X$, and the \textit{negative} Skula space, respectively.

\begin{itemize}
    \item We define the \textit{Skula bispace} of a bispace $X$ to be the space $Sk(X)$ such that its collection of points is $|X|$, the positive opens are the topology generated by the positive opens and the negative closed sets of $X$, and the negative opens are the topology generated by the negative opens and the positive closed sets of $X$.
    \item We define the \textit{closed-fitted} Skula bispace the bispace $Sk_{cf}(X)$ with $|X|$ as an underlying set of points, with positive opens the patch-opens of $X$, and negative opens the topology generated by the positive and the negative closed sets of $X$.
    \item We define the \textit{positive-negative} Skula bispace as the bispace $Sk_{\pm}(X)$ with $|X|$ as its underlying set of points, with positive opens the positive Skula space, and negative opens the negative Skula space.
\end{itemize}

We observe right away that all three Skula bispaces of a bispace $X$ have the same patch, i.e. the topology on $|X|$ generated by the positive and the negative opens together with the positive and the negative closed sets. For a bispace $X$, we say that a patch-open set is \tc{finitary} is it is of the form $(U_1^+\cap U^-_1)\cup ...\cup (U^+_n\cap U^-_n)$. By analogy with pointfree topology, we call an intersection of patch-open sets a \tc{fitted} subset of $X$. Finally, we say that a fitted subset of $X$ is \tc{finitary fitted} if it is an intersection of finitary patch opens. We observe that the finitary fitted subsets are exactly those of the form $\bca_iU^+_i\cup U^-_i$. We also observe the following fact.
\begin{lemma}
For a bispace $X$, the negative open sets of the closed-fitted Skula bispace $Sk_{cf}$ are exactly the complements of the finitary fitted sets of $X$.
\end{lemma}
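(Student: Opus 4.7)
The plan is to reduce the claim to a straightforward De Morgan computation by unpacking both sides of the claimed equality as concretely as possible.

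First, I would unpack the definition of the negative topology of $Sk_{cf}(X)$. Since it is generated by the positive closed sets together with the negative closed sets, a basic open is a finite intersection of such sets. Because positive closed sets are closed under finite intersection (they are the closed sets of the positive topology on $X$), and similarly for negative closed sets, every finite intersection of generators can be rewritten in the canonical form $C^+\cap D^-$, where $C^+$ is positive closed and $D^-$ is negative closed (using $C^+=X$ if no positive generator appears, and similarly $D^-=X$ on the negative side). Consequently, the negative opens of $Sk_{cf}(X)$ are precisely the sets of the form
\[
\bigcup_{i\in I}\bigl(C_i^+\cap D_i^-\bigr),
\]
with each $C_i^+$ positive closed and each $D_i^-$ negative closed.

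Second, I would unpack the notion of finitary fitted set. By definition this is an intersection of finitary patch-open sets, and by the observation made just before the statement such an intersection can always be written as $\bigcap_{i\in I}(U_i^+\cup U_i^-)$ for families of positive opens $U_i^+$ and negative opens $U_i^-$. Taking complements and applying De Morgan,
\[
\Bigl(\bigcap_{i\in I}(U_i^+\cup U_i^-)\Bigr)^c=\bigcup_{i\in I}\bigl((U_i^+)^c\cap (U_i^-)^c\bigr),
\]
which matches the canonical form in the previous paragraph with $C_i^+:=(U_i^+)^c$ positive closed and $D_i^-:=(U_i^-)^c$ negative closed.

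The two inclusions now follow immediately: given a negative open $\bigcup_i(C_i^+\cap D_i^-)$ of $Sk_{cf}(X)$, setting $U_i^+:=(C_i^+)^c$ and $U_i^-:=(D_i^-)^c$ exhibits it as the complement of the finitary fitted set $\bigcap_i(U_i^+\cup U_i^-)$; and conversely every such complement is of the required form. There is essentially no obstacle here — the only point worth being careful about is the handling of the trivial cases where a generator of one parity is absent from some finite intersection, which is resolved by allowing $C^+$ or $D^-$ to be the whole space $X$.
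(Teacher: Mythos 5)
Your proof is correct: the paper states this lemma without proof, treating it as immediate from the observation just before it that finitary fitted sets are exactly those of the form $\bigcap_i (U_i^+\cup U_i^-)$, and your De Morgan computation is precisely the verification being left implicit. Your care with the empty/trivial cases (allowing $C^+$ or $D^-$ to be all of $X$) is the right touch and nothing is missing.
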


The terminology ``closed-fitted" will become clearer once, in the next sections, we look at pointfree versions of all three these bispaces.

\begin{theorem}\label{skulabisobers}
For any finitary biframe $\ca{L}$, the bisober bisubspaces of $\bpt(\ca{L})$ are exactly those such that their underlying sets are patch-closed sets of $Sk(\bpt(\ca{L}))$, and these coincide with the patch-closed sets of $Sk_{cf}(\bpt(\ca{L}))$ and of $Sk_{\pm}(\bpt(\ca{L}))$.
\end{theorem}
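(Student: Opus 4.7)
The plan is to reduce the theorem to Corollary \ref{generalbisobersub} by describing the patch-closed sets of all three Skula bispaces in a common explicit form.

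First I would invoke the observation made just before the theorem: all three Skula bispaces $Sk(\bpt(\ca{L}))$, $Sk_{cf}(\bpt(\ca{L}))$ and $Sk_{\pm}(\bpt(\ca{L}))$ have the \emph{same} patch topology, namely the one on $|\bpt(\ca{L})|$ generated by the collection
\[
\{\va{L}{+}{x}:x^+\in L^+\}\cup\{\va{L}{-}{x}:x^-\in L^-\}\cup\{\va{L}{+}{x}^c:x^+\in L^+\}\cup\{\va{L}{-}{x}^c:x^-\in L^-\}.
\]
So the three claimed classes of patch-closed sets automatically coincide, and it suffices to show that these patch-closed sets are precisely the sets described in Corollary \ref{generalbisobersub}.

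Next I would describe the patch-open sets explicitly. A basic patch-open is a finite intersection of generators from the four families above. Using that $\va{L}{+}{-}$ sends finite meets in $L^+$ to finite intersections, and that $\va{L}{+}{a}^c\cap \va{L}{+}{b}^c=\va{L}{+}{a\ve b}^c$ (and the analogous identities for the negative frame), every basic patch-open collapses to a set of the form
\[
\va{L}{+}{x}\cap \va{L}{-}{x}\cap \va{L}{+}{y}^c\cap \va{L}{-}{y}^c
\]
for suitable $x^+,y^+\in L^+$ and $x^-,y^-\in L^-$. Taking complements, a basic patch-closed set (i.e.\ the complement of a basic patch-open) is of the form
\[
\va{L}{+}{x}^c\cup \va{L}{-}{x}^c\cup \va{L}{+}{y}\cup \va{L}{-}{y},
\]
which is exactly a typical intersectand appearing in Corollary \ref{generalbisobersub}. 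A general patch-closed set is an arbitrary intersection of such basic patch-closed sets, and this is precisely the form of the underlying set of a bisober bisubspace given in that corollary. Conversely, any intersection of the form in Corollary \ref{generalbisobersub} is visibly a patch-closed set in the common patch topology.

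The only step requiring care is the distributive bookkeeping when rewriting basic patch-opens in the claimed canonical form; this is routine and uses only that $\va{L}{\pm}{-}$ preserves finite meets and carries joins to unions. Once that is done, combining with Corollary \ref{generalbisobersub} yields both the characterization of bisober bisubspaces as patch-closed sets of $Sk(\bpt(\ca{L}))$ and the equality with the patch-closed sets of $Sk_{cf}(\bpt(\ca{L}))$ and $Sk_{\pm}(\bpt(\ca{L}))$.
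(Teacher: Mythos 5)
Your proposal is correct and follows the same route as the paper, which simply cites Corollary \ref{generalbisobersub} together with the definitions of the three Skula bispaces; you have merely filled in the (routine) verification that the common patch topology's closed sets are exactly the intersections appearing in that corollary. The distributive bookkeeping you flag is indeed the only content beyond the citation, and it goes through as you describe.
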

\begin{proof}
This follows from Corollary \ref{generalbisobersub} and from the definition of the three Skula bispaces.
\end{proof}

Let us refine the result above in order to understand the meaning of the three different bitopologies on this patch that these bispaces represent.

\begin{lemma}\label{reallemmacf9}
For a finitary biframe $\ca{L}$, the patch-closed bisubspaces of $\bpt(\ca{L})$ are exactly those of the form $\bpt(\Lq{\na(x)})$ for some $x\in L$. The finitary patch-fitted bisubspaces correspond to those of the form $\bca_i \bpt(\Lq{C})$ for some finitary fitted congruence $C$.  
\end{lemma}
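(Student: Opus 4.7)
The plan is to identify the bispectrum of each relevant biquotient with a subset of $|\bpt(\ca{L})|$ using Proposition~\ref{manyfacts9}, and then to recognise that subset as the required kind of patch-(fitted) closed set. For the first claim, since the basic positive and negative opens of $\bpt(\ca{L})$ are the sets $\va{L}{+}{x}$ and $\va{L}{-}{x}$, an arbitrary patch-open of $\bpt(\ca{L})$ is a union of the form $\bcu_i(\va{L}{+}{x_i}\cap \va{L}{-}{x_i})$ for families $x_i^+\in L^+$ and $x_i^-\in L^-$. Setting $z:=\bve_i \syw{x_i}{x_i}\in L$, this patch-open is the set $\{f\in \bpt(\ca{L}): f(z)=1\}$. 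Since $\na:L\to \mf{A}(L)$ preserves joins we have $\na(z)=\bve_i \na(\syp{x_i})\cap \na(\sym{x_i})$, which is a finitary congruence by~(\ref{finitarycongruence}); applying items~(1) and~(3) of Proposition~\ref{manyfacts9} then yields $|\bpt(\Lq{\na(z)})|=\bca_i(\va{L}{+}{x_i}^c\cup \va{L}{-}{x_i}^c)$, the patch-closed complement of the patch-open we started from. Conversely, every $z\in L$ has the form $\bve_i \syw{x_i}{x_i}$, so each $\bpt(\Lq{\na(z)})$ is indeed patch-closed.

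For the second claim, I would begin by computing $|\bpt(\Lq{\del(\syv{x}{x})})|=\va{L}{+}{x}\cup \va{L}{-}{x}$: a biframe map $f:\ca{L}\to \bd{2}$ factors through $\del(\syv{x}{x})$ iff $f(\syv{x}{x})=1$, iff $f^+(x^+)=1$ or $f^-(x^-)=1$. This is a finitary patch-open set. Every finitary fitted congruence has the form $C=\bve_i \del(\syv{x_i}{x_i})$, so item~(1) of Proposition~\ref{manyfacts9} gives $|\bpt(\Lq{C})|=\bca_i(\va{L}{+}{x_i}\cup \va{L}{-}{x_i})$, which is precisely a finitary patch-fitted set. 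Conversely, any finitary patch-fitted bisubspace has the form $\bca_i(U_i^+\cup U_i^-)$ with $U_i^+=\va{L}{+}{x_i}$ and $U_i^-=\va{L}{-}{x_i}$, and so arises from the finitary fitted congruence $\bve_i \del(\syv{x_i}{x_i})$.

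The argument is mostly a bookkeeping exercise once Proposition~\ref{manyfacts9} is in hand. The only point that requires a little attention is that $\na(z)$ must be a \emph{finitary} congruence for every $z\in L$, not only when $z$ itself is finitary; this follows from writing $z=\bve_i \syw{x_i}{x_i}$ and using that $\na$ preserves joins, which puts $\na(z)$ into the canonical form~(\ref{finitarycongruence}).
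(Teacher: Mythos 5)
Your argument is correct and follows essentially the same route as the paper: write a patch-closed (resp.\ finitary patch-fitted) set in canonical form, pull the join inside $\na$ (resp.\ assemble the $\del$'s), and apply Proposition~\ref{manyfacts9}. The only slip is a citation: the identity $|\bpt(\Lq{\na(\syp{x_i})\cap\na(\sym{x_i})})|=\va{L}{+}{x_i}^c\cup\va{L}{-}{x_i}^c$ that you need is item~(4) of Proposition~\ref{manyfacts9} with $y^+=y^-=0$ (or a one-line direct check that a two-valued point kills $\syw{x_i}{x_i}$ iff it kills one of the two conjuncts), not item~(3), which only handles a single generator $\na(\syp{a})$.
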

\begin{proof}
An arbitrary patch-closed bisubspace of $\bpt(\ca{L})$ is of the form $\bca_i \va{L}{+}{a_i}^c\cup \va{L}{-}{a_i}^c$. By items (1) and (4) of Proposition \ref{manyfacts9}, then, the patch-closed sets are exactly those of the form $|\bpt(\ca{L}/(\bve_i \na(\syp{a_i})\cap \na(\sym{a_i})))|=|\bpt(\ca{L}/\na(\bve_i\syp{a_i}\we \sym{a_i}))|$ for some family $a^+_i\in L^+$ and some family $a^-_i\in L^-$. These, then, are precisely the bispectra of quotients of the form $\na(x)$ for $x\in L$. Similarly, an arbitrary finitary patch-fitted set is of the form $\bca_i \va{L}{+}{a_i}\cup \va{L}{-}{a_i}$, and by items (1) and (4) of Proposition \ref{manyfacts9}, this is equal to $|\bpt(\ca{L}/\bve_i(\del(\syp{a_i})\cap \del(\sym{a_i})))|=|\bpt(\ca{L}/(\bve_i \del(\syv{a_i}{a_i})))|$. Since $\bve_i\del(\syv{a_i}{a_i})$ is the general form of a finitary fitted congruence, we are done.
\end{proof}

\begin{proposition}
For a finitary biframe $\ca{L}$, the closed-fitted Skula bispace $Sk_{cf}(\ca{L})$ is such that 
\begin{itemize}
    \item its positive closed sets are the underlying sets of the bisober of the form $\bpt(\Lq{\na(x)})$ for some $x\in L$;
    \item its negative closed ones are bisober bisubspaces of the form $\bpt(\Lq{C})$ for some finitary fitted congruence $C$. 
\end{itemize}
\end{proposition}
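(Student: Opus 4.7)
The plan is to derive both items directly from Lemma \ref{reallemmacf9} together with the description of the negative opens of $Sk_{cf}$ that has just been recorded. The argument is essentially bookkeeping; no creative step is required, so I will keep it tight.

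For the positive-closed part, I would simply unwind the definition of $Sk_{cf}(\bpt(\ca{L}))$: its positive opens are by fiat the patch-opens of $\bpt(\ca{L})$, so its positive closed sets are exactly the patch-closed subsets of $\bpt(\ca{L})$. Lemma \ref{reallemmacf9} characterises these as the underlying sets of the bisober bisubspaces $\bpt(\Lq{\na(x)})$ for $x\in L$, which is exactly the content of the first bullet.

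For the negative-closed part, the lemma immediately preceding the statement asserts that the negative opens of $Sk_{cf}$ are exactly the complements of the finitary fitted subsets of $\bpt(\ca{L})$. Hence the negative closed sets of $Sk_{cf}$ coincide with the finitary patch-fitted subsets of $\bpt(\ca{L})$. Lemma \ref{reallemmacf9} then tells us that any such subset has the form $\bca_i \bpt(\Lq{C_i})$ with each $C_i$ a finitary fitted congruence, and by item (1) of Proposition \ref{manyfacts9} this equals $\bpt(\Lq{\bve_i C_i})$. Since the collection of finitary fitted congruences is closed under arbitrary joins — immediate from the canonical form $\bve_i \del(\syv{x_i}{x_i})$ — the join $\bve_i C_i$ is again a finitary fitted congruence $C$, as required by the second bullet.

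The main (minor) thing to be careful about is recognising that the intersection in the statement of Lemma \ref{reallemmacf9} can be collapsed to a single bisober of the form $\bpt(\Lq{C})$; this is precisely the closure-under-joins observation above. Otherwise the proof is a direct invocation of the preceding lemma and Lemma \ref{reallemmacf9}, and I expect no obstacle.
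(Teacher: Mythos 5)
Your proof is correct and follows essentially the same route as the paper: unwind the definition of $Sk_{cf}$ to identify the positive closed sets with the patch-closed sets and the negative closed sets with the finitary patch-fitted sets, then invoke Lemma \ref{reallemmacf9}. Your extra step of collapsing $\bca_i \bpt(\Lq{C_i})$ to a single $\bpt(\Lq{\bve_i C_i})$ via item (1) of Proposition \ref{manyfacts9} and closure of finitary fitted congruences under joins is a welcome bit of care that the paper's own proof (and the ambiguous phrasing of Lemma \ref{reallemmacf9}) leaves implicit.
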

\begin{proof}
The positive closed sets of $Sk_{cf}(\bpt(\ca{L}))$ are the patch-closed sets of $\bpt(\ca{L})$, and by \ref{reallemmacf9} these are the same as the bisober bisubspaces of the form $\bpt(\ca{L}/\na(x))$. The negative closed sets of $Sk_{cf}(\ca{L})$ are the finitary patch-fitted subsets of $\bpt(\ca{L})$, and these by Lemma \ref{reallemmacf9} are the bisober bisubspaces of the form $\bca_i \bpt(\Lq{C})$ for some finitary fitted congruence $C$. 
\end{proof}

Let us move to the positive-negative Skula bispace. To be able conceptualize better the analogous result for this Skula bispace, we need a new notion. For a biframe $\ca{L}$, among the congruences of $L$, there exist special ones, in that they are generated by relations involving only the positive or only the negative generators. For a biframe $\ca{L}$, and for a congruence $C^+$ on $L^+$, we call $\syp{C}$ the congruence on $L$ generated by $\{(\syp{a},\syp{a}):[a^+]_{C^+}\leq [b^+]_{C^+}\}$. Notice that this is the same as the congruence $\bve \{\na(\syp{a})\cap \del(\syp{b}):[a^+]_{C^+}\leq [b^+]_{C^+}\}$. We refer to a congruence on $L$ as \tc{positive} if it is of the form $\syp{C}$ for some congruence $C^+$ on $L^+$. For a congruence $C^-$ on $L^-$, we define the congruence $\sym{C}$ on $L$ similarly, and we define \tc{negative} congruences on $L$ similarly as the positive ones. For a finitary biframe $\ca{L}$, we define a bisubspace of $\bpt(\ca{L})$ to be \tc{positive} if it is of the form $\bpt(\Lq{\sy{C^+}})$ for some positive congruence $\syp{C}$. We notice that positive bisober bisubspaces $Y\se \bpt(\ca{L})$ are such that $\p{f}{f_1}\in Y$ and $\p{f}{f_2}\in \bpt(\ca{L})$ implies that $\p{f}{f_2}\in Y$. That is, positive bisober bisubspaces ``stretch across all possible negative coordinates". We define negative bisubspaces of $\bpt(\ca{L})$ similarly.

\begin{proposition}
For a finitary biframe $\ca{L}$, the positive-negative Skula bispace $Sk_{\pm}(\bpt(\ca{L}))$ is such that 
\begin{itemize}
    \item its positive closed sets are the underlying sets of the bisober positive bisubspaces -- that is -- those of the form $\bpt(\Lq{\sy{C^+}})$ for some congruence $C^+$ on $L^+$;
    \item its negative closed sets correspond to the underlying sets of the bisober negative bisubspaces -- that is -- those of the form $\bpt(\Lq{\sy{C^-}})$ for some congruence $C^-$ on $L^-$.
\end{itemize}
\end{proposition}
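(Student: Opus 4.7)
The plan is to characterize the positive closed sets of $Sk_{\pm}(\bpt(\ca{L}))$ explicitly as intersections of simple unions of basic positive opens and closed sets, and then to match these with the underlying sets of the positive bisober bisubspaces. The negative case follows by the symmetric argument.

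First, I would unwind the definition of the positive Skula topology on $|\bpt(\ca{L})|$, which by definition is generated by the subbasis $\{\va{L}{+}{a},\va{L}{+}{a}^c:a^+\in L^+\}$. Its closed sets are intersections of finite unions of subbasic closed sets; using the frame identities $\va{L}{+}{\bve_k b_k}=\bcu_k \va{L}{+}{b_k}$ and $\va{L}{+}{\bwe_k a_k}^c=\bcu_k \va{L}{+}{a_k}^c$, every such finite union reduces to one of the form $\va{L}{+}{a}^c\cup \va{L}{+}{b}$. Hence the positive Skula-closed sets are precisely the intersections $\bca_i(\va{L}{+}{a_i}^c\cup \va{L}{+}{b_i})$ for families $a^+_i,b^+_i\in L^+$.

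Next, I would identify each basic constituent $\va{L}{+}{a}^c\cup \va{L}{+}{b}$ with $|\bpt(\Lq{\less{\syp{a}}{\syp{b}}})|$. This is item (4) of Proposition \ref{manyfacts9} specialised to the negative coordinates $1^-$ and $0^-$, so that the expressions $\syp{a}\we\sym{1^-}$ and $\syp{b}\ve\sym{0^-}$ collapse to $\syp{a}$ and $\syp{b}$ respectively; alternatively one verifies directly that a $\bd{2}$-valued point factors through $\na(\syp{a})\cap\del(\syp{b})$ iff $f^+(a)=0$ or $f^+(b)=1$. Combined with item (1) of the same proposition, the analysis above shows every positive Skula-closed set has the form $|\bpt(\Lq{\bve_i\less{\syp{a_i}}{\syp{b_i}}})|$.

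The key step is then to recognise $\bve_i\less{\syp{a_i}}{\syp{b_i}}$ as a positive congruence $\syp{C}$, where $C^+$ is the congruence on $L^+$ generated by the pairs $(a^+_i,a^+_i\we b^+_i)$. Both congruences present the smallest congruence on $L$ that forces $\syp{a_i}\leq \syp{b_i}$ for all $i$: this uses that $\syp{-}$ is a frame map, so any inequality derivable in $L^+/C^+$ from $\{a^+_i\leq b^+_i\}_i$ transfers to a derivation of $\syp{a}\leq \syp{b}$ in the quotient of $L$. Conversely, for an arbitrary congruence $C^+$ on $L^+$, the defining expression $\syp{C}=\bve\{\less{\syp{a}}{\syp{b}}:[a^+]_{C^+}\leq [b^+]_{C^+}\}$ together with the identifications above shows $|\bpt(\Lq{\syp{C}})|$ is of the canonical form for a positive Skula-closed set, completing the bijection. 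I expect the main obstacle to be this universal-property identification $\syp{C}=\bve_i\less{\syp{a_i}}{\syp{b_i}}$; everything else is a routine application of Proposition \ref{manyfacts9}, and the negative case is handled by the completely symmetric argument.
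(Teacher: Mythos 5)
Your proof is correct and follows essentially the same route as the paper's: describe the positive Skula-closed sets in the canonical form $\bca_i(\va{L}{+}{a_i}^c\cup\va{L}{+}{b_i})$ and then match them with the bispectra of quotients by positive congruences. In fact you supply the one genuinely nontrivial step --- the identification $\bve_i\na(\syp{a_i})\cap\del(\syp{b_i})=\syp{C}$ via the universal property of $L^+/C^+$ --- which the paper's one-line argument leaves implicit (its citation of Lemma \ref{reallemmacf9} only covers the patch-closed and patch-fitted cases, not the positive/negative one).
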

\begin{proof}
The positive closed sets of $Sk_{\pm}(\bpt(\ca{L}))$ are the subsets of $\bpt(\ca{L})$ of the form $\bca_i \va{}{}{}\cup \va{}{}{}$, and by \ref{reallemmacf9} these are the same as the bisober bisubspaces of the form $\bpt(\ca{L}/\syp{C}))$. The argument for the negative topology is similar. 
\end{proof}

\section{A visual summary}

We now give a visual interpretation of the results that we have gathered so far. For any frame $L$ we have its assembly $\mf{A}(L)$. If we take $L$ to be linear, then we may visualize its spectrum as a line, where opens are left closed segments. The assembly $\mf{A}(L)$ has a spectrum which may be viewed as a bitopological space with $|\pt(L)|$ as its underlying set of points. Its positive opens are just the opens of the original topology, and its negative opens are the topology generated by the closed sets of $\pt(L)$. Below we depict a typical positive and a typical negative open of $Sk(\pt(L))$, seen as a bitopology.

 \includegraphics[scale=0.78]{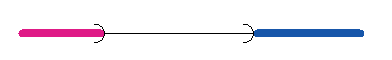}

These two topologies ``generate" all the sober subspaces of $L$ in the sense that all sober subspaces are patch-closed in this bitopology. There is no other obvious way of dividing the topology generated by these sets (namely $Sk(\pt(L))$) into two topologies.

When we are dealing with a pointfree bispace $\ca{L}$, instead, look at the topology in which the closed sets coincide with the underlying sets of the bisober bisubspaces. As we have seen, this is the topology generated by the positive opens, together with the positive closed sets, together with the negative opens, together with the negative closed sets. To visualize this let us assume that $\ca{L}$ is a biframe in which both frames are linear. The space $\bpt(\ca{L})$ will then be a rectangle, possibly with some parts missing due to the inequalities that hold in $L$. Positive opens are left portions of the partial rectangle, and negative opens are lower portions. Then, these four kinds of sets correspond to four kinds of rectangles. Suppose that our set of types of rectangles is $\{O^+,O^-,C^+,C^-\}$, for ``positive opens, negative opens, positive closed sets, negative closed sets". 

\medskip

\includegraphics[scale=0.66]{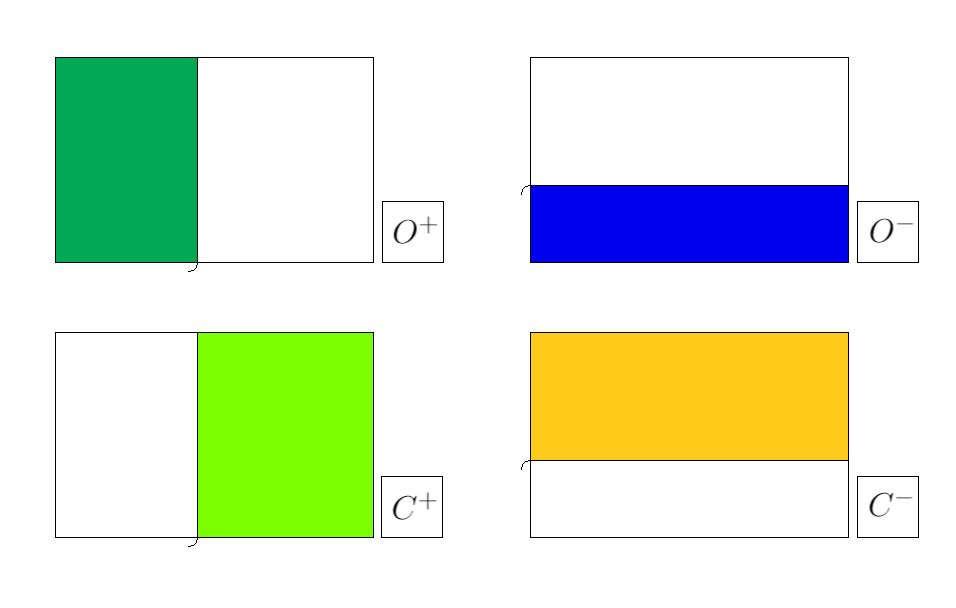}

There are only three ways of partitioning a set of four elements $\{a,b,c,d\}$ into two sets of two, namely $\{\{a,b\},\{c,d\}\}$, $\{\{a,c\},\{b,d\}\}$, and $\{\{a,d\},\{b,c\}\}$. Each such partition of the set $\{O^+,O^-,C^+,C^-\}$ determines a bitopological space whose patch is the topology generated by $O^+\cup O^-\cup C^+\cup C^-$, in the sense that a partition $\{\{x_1,x_2\},\{x_3,x_4\}\}$ determines the bitopological space in which the positive topology is the one generated by the collection $x_1\cup x_2$, and the negative one the one generated by the collection $x_3\cup x_4$. The three partitions of $\{O^+,O^-,C^+,C^-\}$ into two sets of cardinality two correspond to the three Skula bispaces. Let us summarize the information that we have gathered about each of these.

\begin{itemize}
    \item The partition $\{\{O^+,O^-\},\{C^+,C^-\}\}$ yields the closed-fitted Skula bispace.

\begin{figure}[H]
  \centering
  \begin{minipage}[b]{0.4\textwidth}
  \includegraphics[scale=0.78]{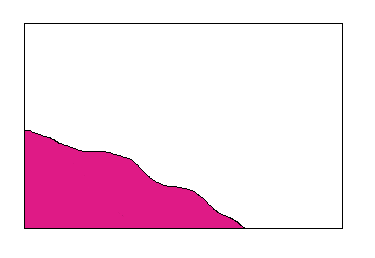}
     \caption{\small{A typical positive open of $Sk_{cf}(\bpt(\ca{L}))$ is a lower left portion of the partial rectangle $\bpt(\ca{L})$. These sets also coincide with the patch-open sets of $\bpt(\ca{L})$, and the complements $|\bpt(\Lq{\na(x)})|^c$ for some $x\in L$.}}
  \end{minipage}
  \hfill
  \begin{minipage}[b]{0.4\textwidth}
     \includegraphics[scale=0.78]{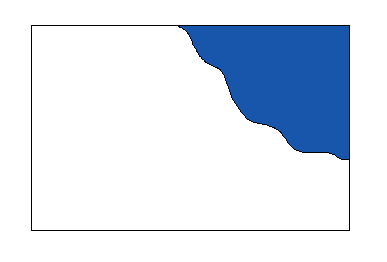}
  \caption{\small{A typical negative open of $Sk_{cf}(\bpt(\ca{L}))$ is an upper right portion of $\bpt(\ca{L})$. These sets also coincide with the complements of the finitary patch-fitted sets of $\bpt(\ca{L})$, and the complements $|\bpt(\Lq{C})|^c$ for a finitary fitted congruence $C$ on $L$.}}
  \end{minipage}
\end{figure}

\item The partition $\{\{O^+,C^+\},\{O^-,C^-\}\}$ yields the positive-negative Skula bispace. 

\begin{figure}[H]
  \centering
  \begin{minipage}[b]{0.4\textwidth}
  \includegraphics[scale=0.78]{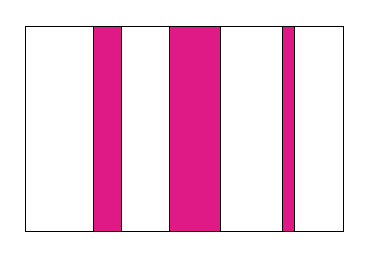}
     \caption{\small{A typical positive open of $Sk_{\pm}(\bpt(\ca{L}))$ is a collection of vertical portions of the partial rectangle $\bpt(\ca{L})$. These sets also coincide with the complements of the d-sober positive bisubspaces of $\bpt(\ca{L})$, and the complements $|\bpt(\Lq{\syp{C}{}})|^c$ for a congruence $C^+$ on $L^+$.}}
  \end{minipage}
  \hfill
  \begin{minipage}[b]{0.4\textwidth}
 \includegraphics[scale=0.78]{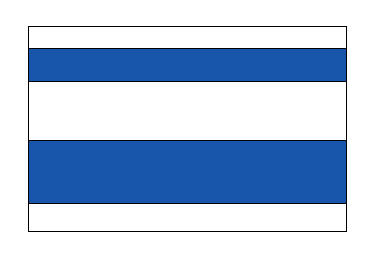}
     \caption{\small{A typical negative open of $Sk_{\pm}(\bpt(\ca{L}))$ is a collection of horizontal portions of the partial rectangle $\bpt(\ca{L})$. These sets also coincide with the complements of the d-sober negative bisubspaces of $\bpt(\ca{L})$, and the complements $|\bpt(\Lq{\sym{C}{}})|^c$ for a congruence $C^-$ on $L^-$.}}
  \end{minipage}
\end{figure}

\item The partition $\{\{O^+,C^-\},\{O^-,C^+\}\}$ yields the Skula bispace. 

\begin{figure}[H]
  \centering
  \begin{minipage}[b]{0.4\textwidth}
  \includegraphics[scale=0.78]{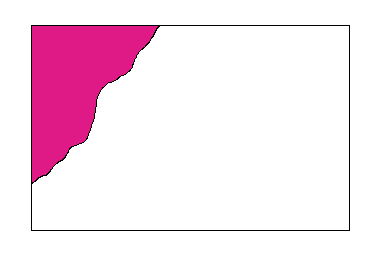}
     \caption{\small{A typical positive open of $Sk(\bpt(\ca{L}))$ is an upper left portion of the partial rectangle $\bpt(\ca{L})$. These sets also coincide with those of the topology generated by the positive opens and the negative closed sets of $\bpt(\ca{L})$.}}
  \end{minipage}
  \hfill
  \begin{minipage}[b]{0.4\textwidth}
\includegraphics[scale=0.78]{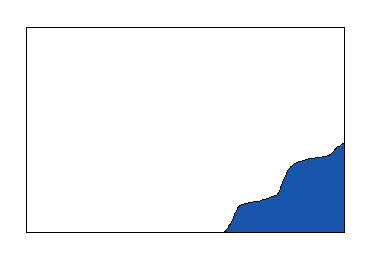}
     \caption{\small{A typical positive open of $Sk(\bpt(\ca{L}))$ is a lower right portion of the partial rectangle $\bpt(\ca{L})$. These sets also coincide with those of the topology generated by the positive opens and the negative closed sets of $\bpt(\ca{L})$.}}
  \end{minipage}
\end{figure}
\end{itemize}

\section{The two variations of the assembly of a finitary biframe}
\subsection{The assembly functor and its spatial counterpart}

In this subsection, we recall some facts about the assembly of a finitary biframe. Recall that for a biframe $\ca{L}$ we define its \tc{assembly} to be the structure
\[
\mf{A}(\ca{L})=(\mf{A}_{\na^+\del^-}(L),\mf{A}_{\na^-\del^+}(L),\mf{A}_{\fin}(L))
\]
where $\mf{A}_{\na^+\del^-}(L)$ is the subframe of $\mf{A}_{\fin}(L)$ generated by the collection $\{\na(a^+):a^+\in L^+\}\cup \{\del(a^-):a^-\in L^-\}$ and $\mf{A}_{\na^-\del^+}(L)$ is defined similarly. This structure is introduced in \cite{schauerte92}, where it is also shown that there is a canonical biframe embedding $\na=\pa{\na}:\ca{L}\ra \mf{A}(\ca{L})$ assigning to each $a^+$ the corresponding closed congruence $\na(a^+)$, and defined similarly on the negative component; and that this has a certain universal property. Let us recall that for a biframe $\ca{L}$ we have the notion of \tc{bipseudocomplement}. The bicomplement of a generator $x^+\in L^+$ is denoted as $\sim x^+$, and it is $\bve \{x^-\in L^-:\syw{x}{x}=0\mb{ in }L\}$. Bipseudocomplements of negative generators are defined similarly. When a bipseudocomplement $\sim x^+$ is such that $\syv{x}{x}=1$ in $L$, the element $\sim x^+$ is called a \tc{bicomplement} of $x^+$. In \cite{schauerte92} it is shown the assembly of a biframe $\ca{L}$ has the universal property that whenever we have a biframe map $f:\ca{L}\ra \ca{M}$ such that it provides complements for all elements of $L^+\cup L^-$ there is a unique biframe map $\tilde{f}:\Ad\ra \ca{M}$ making the following diagram commute.
\[
\begin{tikzcd}
{} 
& \Ad
\arrow{dr}{\tilde{f}} \\
\ca{L}
\arrow{ur}{\na} 
\arrow{rr}{f} 
&& \ca{M}
\end{tikzcd}
\]

We now recall that the assembly of a biframe is isomorphic to a certain free construction. From now on, we will abbreviate the filter completion functor $\mf{Filt}:\bd{Distr}\ra \bd{Frm}$ as simply $\mf{F}$. In \cite{suarez2020category} it is shown that the assembly of a biframe $\ca{L}$ is isomorphic to the biframe quotient
\renewcommand{\Fi}{\mf{F}}
\renewcommand{\Fip}{\mf{F}(L^+)}
\renewcommand{\Fim}{\mf{F}(L^-)}
\[
(L^+\oplus \Fim,L^-\oplus \Fip,L^+\oplus \Fim\oplus L^-\oplus \Fip)/(C_L\cup \mc{Com}^+_L\cup \mc{Com}^-_L),
\]
where name ``$\mc{Com}^+_L$" stands for ``complementation", and denotes the relation 
\begin{align*}
    & \{(\syp{a}\we \syp{\up a},0),(1,\syp{a}\ve\syp{\up a}):a^+\in L^+\}
\end{align*}
and the relation $\mc{Com}^-_L$ is defined similarly for negative generators $a^-\in L^-$. We will denote as $\mf{A}^m(\ca{L})$ the biframe $(L^+\oplus \Fim,L^-\oplus \Fip,L^+\oplus \Fim\oplus L^-\oplus \Fip)$. Above, we have slightly abused notation by regarding $C_L$ as a congruence on $\mf{A}^m(\ca{L})$ rather than on $\cpu{L}$. Let us state this as a proposition.

\begin{proposition}\label{assemblyiso}
For a finitary biframe $\ca{L}$ its assembly is isomorphic to
\[
(L^+\oplus \Fim,L^-\oplus \Fip,L^+\oplus \Fim\oplus L^-\oplus \Fip)/(C_L\cup \mc{Com}^+_L\cup \mc{Com}^-_L).
\]
\end{proposition}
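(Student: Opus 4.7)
The plan is to show that the quotient biframe on the right-hand side satisfies the same universal property as $\Ad$, namely that of freely adjoining bicomplements to every element of $L^+\cup L^-$; the isomorphism will then follow by uniqueness of universal arrows.

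First, we exhibit a canonical comparison map $\eta:\ca{L}\ra \mf{A}^m(\ca{L})/(C_L\cup \mc{Com}^+_L\cup \mc{Com}^-_L)$, obtained by composing the coproduct inclusions $L^+\inclu L^+\oplus \Fim$ and $L^-\inclu L^-\oplus \Fip$ with the biframe quotient map. Inclusion of $C_L$ among the quotient relations guarantees that $\eta$ respects the biframe structure of $\ca{L}$, since the main component of $\ca{L}$ is isomorphic to $(\cpu{L})/C_L$. Moreover, by the relations $\mc{Com}^+_L$, for each $a^+\in L^+$ the image in the main component of the principal filter $\up a^+\in \Fip$ (viewed through the negative-side inclusion of $\mf{A}^m(\ca{L})$) is a bicomplement of $\eta^+(a^+)=\sy{a^+}$; symmetrically for negative generators via $\mc{Com}^-_L$. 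Thus $\eta$ provides bicomplements for every generator of $\ca{L}$.

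Next, suppose that $f:\ca{L}\ra \ca{M}$ is a biframe map such that every $f^+(a^+)$ admits a bicomplement lying in $M^-$ and every $f^-(a^-)$ admits a bicomplement lying in $M^+$. We propose to build a biframe map out of the quotient by defining it on each component of $\mf{A}^m(\ca{L})$ and then checking compatibility with the quotient relations. On $L^+$ and $L^-$ we use $f^+$ and $f^-$ respectively; on $\Fim$ we define the extension by the antitone assignment $\up a^-\mapsto \pd(f^-(a^-))\in M^+$, and analogously for $\Fip$. The existence and uniqueness of these extensions relies on the universal property of the filter completion, under which suitably order-reversing DL maps from a distributive lattice into a frame lift uniquely to frame maps out of the filter completion. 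These maps assemble into a biframe morphism $\mf{A}^m(\ca{L})\ra \ca{M}$, which respects $C_L$ because $f$ is already a biframe map on $\ca{L}$, and respects $\mc{Com}^+_L\cup \mc{Com}^-_L$ because the chosen bicomplements are genuine in $\ca{M}$. It therefore factors through the quotient to give the desired $\tilde{f}$. Uniqueness is immediate since $\tilde{f}$ is determined on generators.

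The main obstacle is the careful use of the universal property of the filter completion, together with verifying that the induced frame maps out of $\Fim$ and $\Fip$ land in $M^+$ and $M^-$ respectively rather than merely in $M$; this however is guaranteed by the definition of bicomplement, which forces the complement of a positive element to sit in the negative subframe and vice versa. Once this subtle point is checked, the compatibility of the candidate map with $C_L$ and with the complementation relations is straightforward from the universal properties of frame coproducts and quotients.
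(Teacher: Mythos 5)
The paper does not actually prove Proposition \ref{assemblyiso}: it is imported from \cite{suarez2020category} and merely restated, so there is no internal proof to compare against. Your route --- showing that the quotient of $\mf{A}^m(\ca{L})$ by $C_L\cup \mc{Com}^+_L\cup \mc{Com}^-_L$ solves the same universal problem of freely bicomplementing the generators that the paper recalls for $\Ad$ from \cite{schauerte92}, and then invoking uniqueness of universal arrows --- is the natural and expected argument, and it is essentially correct. Two steps deserve to be made explicit. First, the extension of $f$ to $\Fim$ exists because $a^-\mapsto\ \pd f^-(a^-)$ is a lattice homomorphism $(L^-)^{op}\ra M^+$ (complementation of complemented elements in a distributive lattice turns finite joins into meets and vice versa), and $\Fim\cong \mf{Idl}((L^-)^{op})$ is the free frame on that distributive lattice; you gesture at this, but it is the load-bearing use of the filter completion and should be stated, including the verification that the values land in $M^+$ rather than merely in $M$ (which, as you note, is what the definition of bicomplement guarantees). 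Second, ``uniqueness is immediate since $\tilde{f}$ is determined on generators'' is too quick as written: the generators $\sy{\up a^+}$ and $\sy{\up a^-}$ do not lie in the image of $\eta$, so commutativity of the triangle does not directly prescribe their values. What rescues the claim is that the relations $\mc{Com}^{\pm}_L$ force $\sy{\up a^+}$ to be a lattice complement of $\sy{a^+}$, complements in a distributive lattice are unique, and every filter is a directed join of principal ones, so any $\tilde{f}$ commuting with $\eta$ is indeed forced on all of $\Fip$ and $\Fim$. With those two points spelled out, the argument is complete and consistent with the universal property the paper attributes to the assembly.
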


By Lemma \ref{finitarylemma9}, the assembly of any finitary biframe is finitary. Thus, we have a well-defined assignment $\mf{A}:\mf{Obj}(\bd{BiFrm_{fin}})\ra \mf{Obj}(\bd{BiFrm_{fin}})$. We can extend this construction to an endofunctor $\mf{A}:\bd{BiFrm_{fin}}\ra \bd{BiFrm_{fin}}$. Recall that, since $\mf{A}:\bd{Frm}\ra \bd{Frm}$ is an endofunctor, whenever we have a frame map $f:L\ra M$ we always have a frame map $\mf{A}(f):\mf{A}(L)\ra \mf{A}(M)$ defined as $\na(x)\mapsto \na(f(x))$ and as $\del(x)\mapsto \del(f(X))$ on generators. We also observe that for a frame map $f:L\ra M$ the map $\mf{A}(f)$ sends finitary congruences to finitary congruences, and so the map $\mf{A}(f):\mf{A}_{\fin}(L)\ra \mf{A}_{\fin}(M)$ is well-defined. For a morphism $f:\ca{L}\ra \ca{M}$ of finitary biframes we define the map $\mf{A}(f):\mf{A}(\ca{L})\ra \mf{A}(\ca{M})$ as the biframe map whose main component is $\mf{A}(f):\mf{A}_{\fin}(L)\ra \mf{A}_{\fin}(M)$.

In \cite{suarez2020category} it is then shown that the assembly functor is the pointfree counterpart of the Skula functor.

\begin{theorem}
There is a natural isomorphism $\alpha:Sk\circ \bpt\ra \bpt\circ \mf{A}$.
\end{theorem}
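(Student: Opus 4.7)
The plan is to build, for each finitary biframe $\ca{L}$, a bihomeomorphism $\alpha_{\ca{L}}:Sk(\bpt(\ca{L}))\ra \bpt(\mf{A}(\ca{L}))$, and then verify naturality via the universal property of $\na :\ca{L}\ra \mf{A}(\ca{L})$. On underlying sets, since $\bd{2}$ is bicomplemented, every biframe map $f:\ca{L}\ra \bd{2}$ trivially provides complements in the sense of the universal property, so it extends uniquely to a biframe map $\tilde{f}:\mf{A}(\ca{L})\ra \bd{2}$ with $\tilde{f}\circ \na =f$. I take $\alpha_{\ca{L}}$ to be the resulting bijection $f\mapsto \tilde{f}$ between $|\bpt(\ca{L})|$ --- which is also the set of points of $Sk(\bpt(\ca{L}))$ --- and $|\bpt(\mf{A}(\ca{L}))|$.

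To show $\alpha_{\ca{L}}$ is a bihomeomorphism I would match subbases. The positive subframe $\mf{A}_{\na^+\del^-}(L)$ is generated by $\{\na (\syp{a}):a^+\in L^+\}\cup \{\del (\sym{a}):a^-\in L^-\}$, giving subbasic positive opens of $\bpt(\mf{A}(\ca{L}))$ of the form $\{\tilde{f}:\tilde{f}(\na (\syp{a}))=1\}$ and $\{\tilde{f}:\tilde{f}(\del (\sym{a}))=1\}$. Using $\tilde{f}\circ \na =f$, the first corresponds under $\alpha_{\ca{L}}^{-1}$ to $\{f:f^+(a^+)=1\}=\va{L}{+}{a}$, a positive open of $\bpt(\ca{L})$; since $\na (\sym{a})$ and $\del (\sym{a})$ are complements in $\mf{A}_{\fin}(L)$ and $\bd{2}$ is Boolean, the second corresponds to $\{f:f^-(a^-)=0\}=\va{L}{-}{a}^c$, a negative closed set of $\bpt(\ca{L})$. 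By definition of the Skula bispace these are exactly the subbasic positive opens of $Sk(\bpt(\ca{L}))$; the argument for the negative component of $\mf{A}(\ca{L})$ is symmetric, producing negative opens and positive closed sets of $\bpt(\ca{L})$.

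For naturality, given $g:\ca{L}\ra \ca{M}$ and $f:\ca{M}\ra \bd{2}$, the two paths around the naturality square send $f$ to $\widetilde{f\circ g}$ and to $\tilde{f}\circ \mf{A}(g)$ respectively; both extend $f\circ g$ along $\na_{\ca{L}}$ (using the naturality identity $\mf{A}(g)\circ \na_{\ca{L}}=\na_{\ca{M}}\circ g$), so uniqueness in the universal property of the assembly forces them to agree. The main anticipated obstacle is bookkeeping: keeping straight which generator $\na (\sy{a^{\pm}})$ or $\del (\sy{a^{\pm}})$ sits in which of the two subframes $\mf{A}_{\na^{\pm}\del^{\mp}}(L)$, so that the final match reads ``positive opens and negative closed sets of $\bpt(\ca{L})$ together yield positive opens of $Sk(\bpt(\ca{L}))$'' rather than some swapped or sign-flipped variant. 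Once the correspondence is set up, every remaining verification is a direct unwinding of definitions.
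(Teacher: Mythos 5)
Your proof is correct and follows essentially the same route as the paper: for this particular theorem the paper defers to the cited reference, but your argument is precisely the template the paper carries out in detail for the two variations $\mf{A}_{cf}$ and $\mf{A}_{\pm}$ --- a bijection on points obtained from the universal property of $\na:\ca{L}\ra\mf{A}(\ca{L})$ against the bicomplemented biframe $\bd{2}$, a matching of subbasic opens ($\na(\syp{a})$ giving positive opens and $\del(\sym{a})$ giving negative closed sets, correctly placed in the Skula positive topology), and a naturality check. The only difference is that you verify naturality via uniqueness of the extension along $\na$ rather than by evaluating both composites on the generators $\na(\syp{x})$ as the paper does for $\mf{A}_{cf}$ and $\mf{A}_{\pm}$; this is an equivalent and slightly cleaner piece of bookkeeping.
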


With the following theorem, we show the importance of the assembly construction for the theory of finitary biframes (see \cite{suarez2020category}).

\begin{theorem}
For any finitary biframe $\ca{L}$, we have the following.
\begin{itemize}
    \item The biframe $\mf{A}(\ca{L})$ has the universal property that it provides bicomplements freely to all elements of $L^+\cup L^-$. 
    \item The main component of $\Ad$ is anti-isomorphic to the coframe $\mf{S}(\ca{L})$ of all biquotients of $\ca{L}$.
    \item There is a bihomeomorphism $\alpha_{\ca{L}}:Sk(\bpt(\ca{L}))\cong \bpt(\Ad)$
    \item The patch-closed sets of $\bpt(\Ad)$, under the bijection $|\bpt(\ca{L})|\cong|\bpt(\mf{A}(\ca{L}))|$, coincide with the underlying sets of the bisober bisubspaces of $\bpt(\ca{L})$.
\end{itemize}
\end{theorem}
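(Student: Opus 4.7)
The plan is to prove the four items in sequence, drawing on the presentation of $\Ad$ as a free construction from Proposition \ref{assemblyiso} as the main technical engine. For item (1), I would work directly from the quotient presentation. The filter completions $\Fip$ and $\Fim$ contribute bicomplement candidates $\up a^+$ and $\up a^-$ for every generator, while the relations $\mc{Com}^+_L$ and $\mc{Com}^-_L$ force these candidates to become genuine bicomplements in the quotient, and the relation $C_L$ preserves the original biframe structure of $\ca{L}$. The universal property then follows in a standard way: any biframe map $f:\ca{L}\ra \ca{M}$ that provides bicomplements for all generators extends uniquely through the filter completions by their universal property, and since the image of the relations $\mc{Com}^\pm_L$ is satisfied in $\ca{M}$ by the assumption on $f$, the extension factors uniquely through the quotient to yield $\tilde{f}:\Ad\ra \ca{M}$ with $\tilde{f}\circ \na = f$.

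For item (2), I would combine Proposition \ref{finitaryquotient} with the canonical form (\ref{finitarycongruence}) of finitary congruences. By Proposition \ref{finitaryquotient}, the biquotients of $\ca{L}$ correspond bijectively to finitary congruences on $L$, and the correspondence is order-reversing by the definition of the order on $\mf{S}(\ca{L})$. On the other hand, by Lemma \ref{manysubframes6} the main component of $\Ad$ is the frame $\mf{A}_{\fin}(L)$ of finitary congruences on $L$, ordered by inclusion. Composing the two correspondences yields the desired anti-isomorphism.

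For item (3), I would construct the candidate bihomeomorphism $\alpha_{\ca{L}}$ via the universal property of item (1). Since the biframe $\bd{2}$ is trivially Boolean, every biframe map $\pa{f}:\ca{L}\ra \bd{2}$ provides bicomplements for all generators and thus lifts uniquely to a biframe map $\tilde{f}:\Ad\ra \bd{2}$; this gives the underlying bijection $\alpha_{\ca{L}}:|Sk(\bpt(\ca{L}))|\ra |\bpt(\Ad)|$. To verify that $\alpha_{\ca{L}}$ is a bihomeomorphism I would check that the positive subbasic opens of $\bpt(\Ad)$, namely those determined by the generators $\na(a^+)$ and $\del(a^-)$ of the positive component of $\Ad$, pull back under $\alpha_{\ca{L}}$ to the sets $\va{L}{+}{a}$ and $\va{L}{-}{a}^c$ respectively; these are precisely the subbasic positive opens of $Sk(\bpt(\ca{L}))$. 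The negative side is symmetric.

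Item (4) is then a short combination: by Theorem \ref{skulabisobers} the bisober bisubspaces of $\bpt(\ca{L})$ are exactly the patch-closed sets of $Sk(\bpt(\ca{L}))$, and since $\alpha_{\ca{L}}$ is a bihomeomorphism it sends patch-closed sets to patch-closed sets. I expect the main obstacle to be item (1): the bookkeeping needed to verify that the relations $\mc{Com}^\pm_L$ capture exactly what is required to freely adjoin bicomplements, without inadvertently collapsing structure inherited from $L^+$ or $L^-$, is the most delicate step. Since the statement is recalled from \cite{suarez2020category}, the details can be imported from there if needed.
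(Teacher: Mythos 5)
The paper does not actually prove this theorem: it is recalled verbatim from \cite{suarez2020category}, so there is no internal proof to compare against. That said, your reconstruction is correct, and it is exactly the strategy the paper itself carries out in full detail for the two variant assemblies $\Acf$ and $\Apm$: present the assembly as the free construction of Proposition \ref{assemblyiso}, use the universal property against $\bd{2}$ to get the bijection on points (the analogue of Propositions \ref{bijectionpointscf} and \ref{bijectionassemblypm9}), match subbasic opens on both sides (the analogue of Lemmas \ref{lemmabihomeo9cf} and \ref{lemmabihomeo9pm} -- and your identification of the positive subbasis of $\bpt(\Ad)$ with the sets $\va{L}{+}{a}$ and $\va{L}{-}{a}^c$ is the right one, since the positive component of $\Ad$ is generated by the $\na(\syp{a})$ and the $\del(\sym{a})$), and then derive the statement about patch-closed sets from Theorem \ref{skulabisobers}. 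Item (2) is likewise correctly reduced to Proposition \ref{finitaryquotient} together with the identification of the main component of $\Ad$ with $\mf{A}_{\fin}(L)$. You rightly flag the one genuinely delicate point, namely that the relations $\mc{Com}^{\pm}_L$ adjoin bicomplements \emph{freely}, i.e.\ that the canonical map $\na:\ca{L}\ra\Ad$ is an embedding rather than a quotient that collapses part of $L$; this is the biframe analogue of the injectivity of $\na:L\ra\mf{A}(L)$ and is the step that genuinely requires the argument from \cite{suarez2020category} rather than pure bookkeeping. With that caveat made explicit, the proposal is a faithful and correct account of how the result is obtained.
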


The assembly, then, plays the role of a ``bispace of all bisubspaces" of a finitary biframe. However, the bitopological structure of $\mf{A}(\ca{L})$ is somewhat unnatural. For a frame $L$, the assembly has a natural bitopological structure: we have that $\mf{A}(L)$ is generated by the subframe $\mf{Cl}(L)$ of closed congruences and that $\mf{Fitt}(L)$ of fitted congruences. For a finitary biframe, we shall see that there is more than one way of dividing $\mf{A}_{\fin}(L)$ naturally into two generating topologies. This will give raise to two variations of the assembly of a finitary biframe.

\subsection{The $\mf{A}_{cf}$ functor and its spatial counterpart}

Recall that for a bispace $X$ its \tc{closed-fitted Skula bispace} is the bispace $Sk_{cf}(X)$ such that it has the same underlying set of points as $X$, whose positive topology is the one generated by the positive and the negative opens of $X$ (the patch topology of $X$), and whose negative topology is that generated by the positive and the negative closed sets of $X$. We now define the pointfree version of this functor, taking the duality of finitary biframe as our pointfree setting. For a finitary biframe $\ca{L}$, we define its \tc{closed-fitted assembly} as the biframe
\[
\Acf=(\mf{Cl}(L),\mf{Fitt}_{\fin}(L),\mf{A}_{\fin}(L)).
\]
Here $\mf{Cl}(L)$ is the ordered collection of all closed congruences of $L$, while $\mf{Fitt}_{\fin}(L)$ is the ordered collection of the finitary fitted ones (which, recall, are the congruences of the form $\bve_i \del(\syv{x_i}{x_i})$). This triple indeed is a biframe, by item (1) of Lemma \ref{manysubframes6}.

\begin{proposition}\label{assemblyisocf}
For a finitary biframe $\ca{L}$ we have an isomorphism of biframes
\[
\Acf\cong (L^+\oplus L^-,\Fip\oplus \Fim,L^+\oplus \Fim\oplus L^-\oplus \Fip)/(C_L\cup \mc{Com}^+_L\cup \mc{Com}^-_L).
\]
\end{proposition}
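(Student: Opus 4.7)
The plan is to leverage Proposition~\ref{assemblyiso}, which already identifies the main component of the right-hand side with $\mf{A}_{\fin}(L)$. Since the main component of $\Acf$ is by definition $\mf{A}_{\fin}(L)$, the task reduces to verifying that the positive and negative subframes agree under the isomorphism $\phi$ coming from that proposition. Recall that $\phi$ sends the image of $a^+\in L^+$ in the quotient to the closed congruence $\na(\syp{a})$, the image of $a^-\in L^-$ to $\na(\sym{a})$, the image of $\up a^+\in\Fip$ to the open congruence $\del(\syp{a})$, and the image of $\up a^-\in\Fim$ to $\del(\sym{a})$.

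For the positive subframe I would show that the subframe of $\mf{A}_{\fin}(L)$ generated by $\{\na(\syp{a}):a^+\in L^+\}\cup\{\na(\sym{a}):a^-\in L^-\}$ is exactly $\mf{Cl}(L)$. This uses that $\na:L\to\mf{A}(L)$ is a frame homomorphism together with the biframe condition that every $x\in L$ can be written as $\bve_i\syw{a_i}{a_i}$, so that every closed congruence $\na(x)$ decomposes as $\bve_i\na(\syp{a_i})\we\na(\sym{a_i})$. The reverse inclusion is immediate since the proposed generators are themselves closed. Transporting this via $\phi^{-1}$ identifies $\mf{Cl}(L)$ with the image of $L^+\oplus L^-$ in the right-hand quotient, which is the positive subframe there.

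The argument for the negative subframe is dual. I would show that the subframe of $\mf{A}_{\fin}(L)$ generated by $\{\del(\syp{a})\}\cup\{\del(\sym{a})\}$ coincides with $\mf{Fitt}_{\fin}(L)$. The key ingredient is that $\del$ converts joins in $L$ into meets in $\mf{A}(L)$, so every finitary fitted congruence $\bve_i\del(\syv{x_i}{x_i})$ rewrites as $\bve_i(\del(\syp{x_i})\we\del(\sym{x_i}))$; conversely, any finite meet of open congruences of signed generators collapses via $\del(a)\we\del(b)=\del(a\ve b)$ to $\del$ of a finitary join, whose arbitrary joins exhaust $\mf{Fitt}_{\fin}(L)$. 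Transporting via $\phi^{-1}$ identifies $\mf{Fitt}_{\fin}(L)$ with the image of $\Fip\oplus\Fim$ in the right-hand quotient.

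The main obstacle I anticipate is the bookkeeping around the induced congruences on the subframes $L^+\oplus L^-$ and $\Fip\oplus\Fim$ sitting inside the quotient on the right-hand side: one should avoid reasoning directly inside the free construction. The cleanest route is to work entirely inside $\mf{A}_{\fin}(L)$ via $\phi$, only using Proposition~\ref{assemblyiso} to transfer the subframe descriptions; once the subframes have been matched with $\mf{Cl}(L)$ and $\mf{Fitt}_{\fin}(L)$, $\phi$ promotes automatically to a biframe isomorphism.
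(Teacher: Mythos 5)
Your proposal is correct and follows essentially the same route as the paper: both transfer the isomorphism of Proposition~\ref{assemblyiso} on the main component and then identify the images of $L^+\oplus L^-$ and of $\Fip\oplus\Fim$ with the subframes $\mf{Cl}(L)$ and $\mf{Fitt}_{\fin}(L)$ of $\mf{A}_{\fin}(L)$ via the action on generators. The only difference is that you spell out the closure computations (writing $\na(x)$ as $\bve_i\na(\syp{a_i})\we\na(\sym{a_i})$ and using that $\del$ turns joins into meets) which the paper leaves implicit.
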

\begin{proof}
The patch of the free biframe in the statement is isomorphic to $\mf{A}_{\fin}(L)$, by Proposition \ref{assemblyiso}. The isomorphism given by this proposition acts as $\sy{\syp{x}}\mapsto \na(\syp{x})$ and $\sy{\sym{x}}\mapsto \na(\sym{x})$. This means that under this isomorphism the frame $\cpu{L}$ is mapped to the subframe of $\mf{A}_{\fin}(L)$ generated by the elements of the form $\na(\syp{x})$ together with those of the form $\na(\sym{x})$. This is the frame $\mf{Cl}(L)$. We notice that under the isomorphism of Proposition\ref{assemblyiso} we also have $\sy{\syp{x}}\mapsto \del(\syp{x})$, and similarly for generators $x^-\in L^-$. This means that the image of the frame $\Fip\oplus \Fim$ under this isomorphism is the subframe of $\mf{A}_{\fin}(L)$ generated by the elements of the form $\del(\syp{x})$ together with those of the form $\del(\sym{x})$; and this is the frame $\mf{Fitt}_{\fin}(L)$.
\end{proof}

\begin{proposition}
The closed-fitted assembly of a finitary biframe is finitary. 
\end{proposition}
\begin{proof}
This follows from Proposition \ref{assemblyisocf} and by Lemma \ref{finitarylemma9}.
\end{proof}

For a biframe map $f:\ca{L}\ra \ca{M}$, we have that the frame map $f:L\ra M$ induces a frame map $\mf{A}(f):\mf{A}(L)\ra \mf{A}(M)$, by functoriality of the classical assembly construction. We define the biframe map $\mf{A}_{cf}:\Acf\ra \mf{A}_{cf}(\ca{M})$ as the pairing of the frame maps obtained by restricting $\mf{A}(f)$ to $\mf{Cl}(L)$ and to $\mf{Fitt}_{\fin}(L)$, respectively. Let us show that this is a well-defined biframe map.

\begin{lemma}
For a frame map $f:L\ra M$, the map $\mf{A}(f):\mf{A}(L)\ra \mf{A}(M)$ maps closed congruences to closed ones and finitary fitted ones to finitary fitted ones.
\end{lemma}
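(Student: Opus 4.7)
The plan is to exploit the defining action of the functor $\mf{A}$ on morphisms, namely that $\mf{A}(f)(\na(x))=\na(f(x))$ and $\mf{A}(f)(\del(x))=\del(f(x))$ on generators, together with the fact that $\mf{A}(f)$ is a frame map (hence preserves arbitrary joins and finite meets). The two containments then reduce to checking the claim on the respective generating sets of $\mf{Cl}(L)$ and $\mf{Fitt}_{\fin}(L)$, and then invoking preservation of the frame operations to extend it to arbitrary elements.

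First I would treat closed congruences. Recall that $\mf{Cl}(L)$ is the subframe of $\mf{A}_{\fin}(L)$ generated by the set $\{\na(\syp{x}):x^+\in L^+\}\cup\{\na(\sym{x}):x^-\in L^-\}$. Since $f:\ca{L}\to \ca{M}$ is a biframe map, its main component satisfies $f(\syp{x})=\sy{f^+(x^+)}$ and $f(\sym{x})=\sy{f^-(x^-)}$. Therefore
\[
\mf{A}(f)(\na(\syp{x}))=\na(f(\syp{x}))=\na(\sy{f^+(x^+)}),
\]
which sits in $\mf{Cl}(M)$, and similarly on negative generators. Since $\mf{A}(f)$ preserves joins and finite meets and $\mf{Cl}(M)$ is a subframe of $\mf{A}_{\fin}(M)$, the whole subframe $\mf{Cl}(L)$ is sent into $\mf{Cl}(M)$.

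Next I would treat finitary fitted congruences. By Lemma~\ref{manysubframes6} together with the observation immediately preceding Lemma~\ref{reallemmacf9}, an arbitrary finitary fitted congruence has the shape $\bve_i \del(\syv{x_i}{x_i})$. Applying $\mf{A}(f)$ and using that it is a frame map and that $\del$ turns joins into meets,
\[
\mf{A}(f)\bigl(\bve_i \del(\syv{x_i}{x_i})\bigr)=\bve_i \del(f(\syv{x_i}{x_i}))=\bve_i \del\bigl(\sy{f^+(x_i^+)}\ve \sy{f^-(x_i^-)}\bigr),
\]
which is again a join of opens of the form $\del(\syv{y}{y})$ in $\mf{A}_{\fin}(M)$, hence a member of $\mf{Fitt}_{\fin}(M)$.

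There is no real obstacle here; the only point requiring care is to confirm that the image of a generator stays in the intended subframe, which is immediate from the commutation rule $f\circ \sy{-}=\sy{-}\circ f^{\pm}$ defining a biframe map. Once this is verified on generators, preservation of the frame operations by $\mf{A}(f)$ does the rest, so the restrictions $\mf{A}(f)|_{\mf{Cl}(L)}:\mf{Cl}(L)\to\mf{Cl}(M)$ and $\mf{A}(f)|_{\mf{Fitt}_{\fin}(L)}:\mf{Fitt}_{\fin}(L)\to\mf{Fitt}_{\fin}(M)$ are well-defined frame maps, making $\mf{A}_{cf}(f)$ a well-defined biframe morphism.
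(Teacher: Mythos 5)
Your proof is correct and follows essentially the same route as the paper: both arguments reduce to the defining action $\na(x)\mapsto\na(f(x))$, $\del(x)\mapsto\del(f(x))$ of $\mf{A}(f)$ on generators together with preservation of joins, and both conclude that a finitary fitted congruence $\bve_i\del(\syv{x_i}{x_i})$ lands on $\bve_i\del(\sy{f^+(x_i^+)}\ve\sy{f^-(x_i^-)})$. Your detour through the generating set of $\mf{Cl}(L)$ is slightly more roundabout than the paper's direct observation that $\na(f(x))$ is already closed, but it is equally valid (and you correctly use the join form $\syv{x_i}{x_i}$ of finitary fitted congruences, where the paper's own proof has an apparent $\we$/$\ve$ slip).
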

\begin{proof}
The map $\mf{A}(f)$ is defined on closed congruences as $\na(x)\mapsto \na(f(x))$, and so indeed it maps closed congruences to closed ones. It also acts on open congruences similarly, as $\del(x)\mapsto \del(f(x))$. Then, a finitary fitted congruence $\bve_i \del(\syp{x_i}\we \sym{x_i})$ is mapped by this to $\bve_i\del(f(\syp{x_i}\we \sym{x_i}))=\bve_i \del(\sy{f^+(x^+_i)}\we \sy{f^-(x^-_i)})$, which is a finitary fitted congruence of $M$. 
\end{proof}

By its definition, the assignment $\mf{A}_{cf}:\mf{Mor}(\bd{BiFrm_{fin}})\ra \mf{Mor}(\bd{BiFrm_{fin}})$ also respects identities and compositions. We have proved the following.

\begin{proposition}
The assignment $\mf{A}_{cf}:\bd{BiFrm_{fin}}\ra \bd{BiFrm_{fin}}$ is a functor.
\end{proposition}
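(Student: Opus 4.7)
The plan is to verify three things: (i) for every morphism $f:\ca{L}\ra \ca{M}$ of finitary biframes, the triple $\mf{A}_{cf}(f)$ really is a biframe morphism $\Acf\ra \mf{A}_{cf}(\ca{M})$; (ii) $\mf{A}_{cf}$ preserves identities; and (iii) $\mf{A}_{cf}$ preserves composition.

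For (i), I would take as the main component of $\mf{A}_{cf}(f)$ the frame map $\mf{A}(f):\mf{A}_{\fin}(L)\ra \mf{A}_{\fin}(M)$ that was already noted in the paper to be well-defined (as a restriction of the classical assembly functor $\mf{A}:\bd{Frm}\ra \bd{Frm}$, which preserves finitariness of congruences because it acts on generators by $\na(x)\mapsto \na(f(x))$ and $\del(x)\mapsto \del(f(x))$ and hence carries the canonical form (\ref{finitarycongruence}) to itself). The positive and negative components are then defined by restricting this same map to $\mf{Cl}(L)$ and $\mf{Fitt}_{\fin}(L)$; the preceding lemma guarantees that these restrictions corestrict into $\mf{Cl}(M)$ and $\mf{Fitt}_{\fin}(M)$ respectively. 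The compatibility condition required of a biframe morphism---commutativity with the subframe inclusions into the main component---is automatic, because all three components arise by restriction from the single underlying map $\mf{A}(f)$.

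For (ii) and (iii), the functoriality laws transfer componentwise from those of the classical assembly functor $\mf{A}:\bd{Frm}\ra \bd{Frm}$: the identity $\mf{A}(\id_L)=\id_{\mf{A}(L)}$ restricts to the identity on each of the subframes $\mf{Cl}(L)$, $\mf{Fitt}_{\fin}(L)$ and $\mf{A}_{\fin}(L)$; and for a composable pair $f:\ca{L}\ra \ca{M}$, $g:\ca{M}\ra \ca{N}$, the equality $\mf{A}(g\circ f)=\mf{A}(g)\circ \mf{A}(f)$ restricts to the analogous identity on each of the three components of $\mf{A}_{cf}$. No step is a genuine obstacle: all substantive content is packaged into the preceding lemma together with the earlier observation that $\mf{A}(f)$ restricts to a map between the finitary assemblies, and the present proposition is essentially the remark that these restrictions assemble compatibly into a functor.
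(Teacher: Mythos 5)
Your proof is correct and follows essentially the same route as the paper: define $\mf{A}_{cf}(f)$ by restricting $\mf{A}(f)$ to the two subframes, invoke the preceding lemma for well-definedness of the corestrictions, and observe that identities and compositions are inherited componentwise from the classical assembly functor. Your explicit remark that compatibility with the subframe inclusions is automatic because all components restrict a single underlying map is a detail the paper leaves implicit, but it is the same argument.
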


Let us now compare the bispectrum of a biframe with the spectrum of its main component. In the following proof, for a biframe map $\pa{f}:\ca{L}\ra \ca{M}$ we call $f:L\ra M$ the frame map between the main frame components of $\ca{L}$ and $\ca{M}$ generated by $f^+$ and $f^-$.
\begin{lemma}\label{gothroughthepatch}
For every biframe $\ca{L}$, there is a bijection $p_{\ca{L}}:|\bpt(\ca{L})|\cong |\pt(\pi_3(\ca{L}))|$ given by $\pa{f}\mapsto f$. Its inverse maps $f:L\ra M$ to the pairing of its restrictions to $\syp{L}$ and $\sym{L}$.
\end{lemma}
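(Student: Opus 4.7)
The plan is to construct the inverse map explicitly and verify that the two composites are identities. The single fact doing all the work is the generating property that is built into the definition of a biframe: the main component $L$ is generated, as a frame, by $e^+_L[L^+]\cup e^-_L[L^-]$. This forces any witness frame map $L\ra 2$ to be uniquely determined by the pair $\pa{f}$, which is exactly what makes $p_{\ca{L}}$ a well-defined function rather than just a multi-valued assignment.

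First I would define the candidate inverse $q_{\ca{L}}:|\pt(L)|\ra|\bpt(\ca{L})|$ by sending $f:L\ra 2$ to the pair $(f\circ e^+_L,\, f\circ e^-_L)$. This lands in $|\bpt(\ca{L})|$ because $f$ itself is a frame map $L\ra 2$ satisfying $f(\syp{a})=(f\circ e^+_L)(a^+)$ and $f(\sym{a})=(f\circ e^-_L)(a^-)$ for every $a^+\in L^+$ and $a^-\in L^-$, which is precisely the compatibility condition in the definition of a biframe morphism $\ca{L}\ra\bd{2}$.

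Next, for $p_{\ca{L}}$: given a biframe point $\pa{f}\in|\bpt(\ca{L})|$, its definition supplies at least one frame map $f:L\ra 2$ with $f\circ e^+_L=f^+$ and $f\circ e^-_L=f^-$, and I would set $p_{\ca{L}}(\pa{f})=f$. To see that $p_{\ca{L}}$ is well-defined, note that any two such witnesses agree on the generating set $e^+_L[L^+]\cup e^-_L[L^-]$, and two frame homomorphisms that agree on a generating set must be equal. Under the convention of identifying $e^+_L(a^+)$ with $\syp{a}$ and $e^-_L(a^-)$ with $\sym{a}$, the assignment $q_{\ca{L}}$ is exactly the pairing of restrictions to $\syp{L}$ and $\sym{L}$.

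It remains to verify that $p_{\ca{L}}$ and $q_{\ca{L}}$ are mutually inverse, but both composites are immediate. For any $f:L\ra 2$, the map $f$ trivially witnesses its own restrictions, so $p_{\ca{L}}\circ q_{\ca{L}}(f)=f$. Conversely, for a biframe point $\pa{f}$ with witness $f$, we have $q_{\ca{L}}\circ p_{\ca{L}}(\pa{f})=(f\circ e^+_L,\,f\circ e^-_L)=(f^+,f^-)=\pa{f}$ by the defining property of the witness. There is no genuine obstacle here; the only subtlety is the uniqueness of the witness, which rests on the generating property of $e^+_L$ and $e^-_L$ noted at the outset.
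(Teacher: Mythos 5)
Your proposal is correct and follows essentially the same route as the paper's proof: both arguments rest entirely on the fact that $L$ is generated by $e^+_L[L^+]\cup e^-_L[L^-]$, which forces the witnessing frame map $f:L\ra 2$ to be uniquely determined by the pair $\pa{f}$. The only cosmetic difference is that you package the argument as an explicit two-sided inverse while the paper checks well-definedness, injectivity and surjectivity directly; your treatment of the uniqueness of the witness is in fact slightly more explicit than the paper's.
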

\begin{proof}
By definition of biframe map, we have that $\pa{f}:\ca{L}\ra 2$ is a bipoint if and only if there is a frame map $f:L\ra M$ such that it is generated by $f^+$ and $f^-$. The assignment is then well-defined. To see that this is injective, we notice that if we have two distinct frame maps $f_1,f_2:L\ra M$, since $L$ is generated by $\syp{L}$ and $\sym{L}$ the two maps must differ on their restriction to at least one of these frames. For surjectivity, we notice that any frame map $f:L\ra M$ is that generated by its restriction to $\syp{L}$ and $\sym{L}$, as these two frames generate $L$.
\end{proof}

\begin{lemma}\label{twobiframessamepoints}
Suppose that $\ca{L}$ and $\ca{L}'$ are two biframes with the same main component $L$. Then, we have a bijection $\bpt(\ca{L})\cong \bpt(\ca{L}')$.
\end{lemma}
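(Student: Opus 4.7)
The plan is to derive this as a direct corollary of Lemma \ref{gothroughthepatch}. That lemma already establishes, for any single biframe $\ca{L}$, a bijection $p_{\ca{L}}:|\bpt(\ca{L})|\cong|\pt(L)|$ between the bipoints of $\ca{L}$ and the frame points of its main component, sending a biframe map $\pa{f}:\ca{L}\to \bd{2}$ to the unique frame map $f:L\to 2$ it generates (with inverse given by restricting a frame point $L\to 2$ to the positive and negative subframes).

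The key observation is that the image and the construction of $p_{\ca{L}}$ depend only on the main component $L$, and not on the choice of the positive and negative subframe inclusions. So if $\ca{L}$ and $\ca{L}'$ are two biframes whose third component is the same frame $L$, we obtain two bijections $p_{\ca{L}}:|\bpt(\ca{L})|\cong|\pt(L)|$ and $p_{\ca{L}'}:|\bpt(\ca{L}')|\cong|\pt(L)|$.

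Therefore I would simply define the desired bijection as the composite
\[
p_{\ca{L}'}^{-1}\circ p_{\ca{L}}\;:\;|\bpt(\ca{L})|\;\cong\;|\bpt(\ca{L}')|,
\]
which concretely sends a bipoint $\pa{f}:\ca{L}\to \bd{2}$ to the bipoint of $\ca{L}'$ obtained by restricting the underlying frame map $f:L\to 2$ to the subframes $L'^+$ and $L'^-$ of $L$. Since both maps in the composite are bijections of sets, so is their composition, which proves the claim. There is no real obstacle here; the content of the statement has already been absorbed into Lemma \ref{gothroughthepatch}, and the present lemma is just the observation that the bijection there factors through a representation that does not remember the biframe structure above $L$.
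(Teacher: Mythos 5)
Your proposal is correct and is essentially identical to the paper's own proof: the paper also defines the bijection as the composite of $p_{\ca{L}}$ with $p_{\ca{L}'}^{-1}$ from Lemma \ref{gothroughthepatch}. Your additional remark that these bijections depend only on the main component $L$ is exactly the (implicit) justification the paper relies on.
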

\begin{proof}
The map that we have described in the statement is the composition of the bijections $p_{\ca{L}}$ and $p^{-1}_{\ca{L}'}$, as defined in Lemma \ref{gothroughthepatch} above.
\end{proof}

Let us now look at the relation between the bispectrum of a biframe $\ca{L}$ and that of its closed-fitted assembly $\mf{A}_{cf}(\ca{L})$.  Recall that every point of $\Ad$ is of the form $\tilde{f}$ for some bipoint $f:\ca{L}\ra \bd{2}$, where $\tilde{f}$ is the unique biframe map making the following diagram commute.
\[
\begin{tikzcd}
{} 
& \Ad
\arrow{dr}{\tilde{f}} \\
\ca{L}
\arrow{ur}{\na} 
\arrow{rr}{f} 
&& \bd{2}
\end{tikzcd}
\]
Following the proof of Lemma \ref{twobiframessamepoints}, we obtain that we have a bijection $\bpt(\Ad)\cong \bpt(\Acf)$ given by $\tilde{f}\mapsto \tilde{f}_{cf}$, where $\tilde{f}$ is the pairing of the restrictions of the frame map $f:\mf{A}_{\fin}(\ca{L})\ra 2$ to the two subframes $\mf{Cl}(L)$ and $\mf{Fitt}_{\fin}(L)$. Recall that $\mf{Cl}(L)$ is generated by the congruences of the form $\na(\syp{x})$ together with those of the form $\na(\sym{x})$, and that $\mf{Fitt}_{\fin}(L)$ is generated by those of the form $\del(\syp{x})$ together with those of the form $\del(\sym{x})$. Then, any bipoint $\tilde{f}_{cf}:\Acf\ra \bd{2}$ is completely determined by its action on these subbasic congruences. We have proved the following fact.

\begin{proposition}\label{bijectionpointscf}
The assignment $\beta_{\ca{L}}:f\mapsto \tilde{f}_{cf}$, with this map defined as
\begin{align*}
    & \tilde{f}_{cf}:\Acf\ra \bd{2}\\
    & \na(\syp{x})\mapsto f^+(x^+)\\
    & \del(\syp{x})\mapsto \neg f^+(x^+),
\end{align*}
and similarly on the congruences of the form $\na(\sym{x})$ and $\del(\sym {x})$, constitutes a bijection $|\bpt(\ca{L})|\cong |\bpt(\mf{A}_{cf}(\ca{L}))|$.
\end{proposition}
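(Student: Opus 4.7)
The plan is to realise $\beta_{\ca{L}}$ as a composition of two already-established bijections. First, by the universal property of the assembly recalled above, every bipoint $f:\ca{L}\ra\bd{2}$ lifts to a unique bipoint $\tilde{f}:\Ad\ra\bd{2}$ with $\tilde{f}\circ\na = f$, and this correspondence is a bijection $|\bpt(\ca{L})|\cong|\bpt(\Ad)|$. Second, since $\Ad$ and $\Acf$ share the same main component $\mf{A}_{\fin}(L)$, Lemma \ref{twobiframessamepoints} yields a bijection $|\bpt(\Ad)|\cong|\bpt(\Acf)|$, obtained by first passing (via Lemma \ref{gothroughthepatch}) to the underlying frame map on $\mf{A}_{\fin}(L)$ and then restricting to the positive subframe $\mf{Cl}(L)$ and the negative subframe $\mf{Fitt}_{\fin}(L)$ of $\Acf$.

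To compute the composite explicitly, I would chase a bipoint $\pa{f}$ through these two identifications. The lift $\tilde{f}$ satisfies $\tilde{f}(\na(\syp{x}))=f^+(x^+)$ because $\tilde{f}\circ\na = f$ on the positive component, and symmetrically for negative generators. For the open congruences, the key observation is that inside $\Ad$ the elements $\na(\syp{x})$ and $\del(\syp{x})$ are complementary; since $\bd{2}$ is Boolean with unique complements, this forces $\tilde{f}(\del(\syp{x}))=\neg f^+(x^+)$, and again symmetrically for $\sym{x}$. The subsequent restriction to $\Acf$ does not alter these values on the subbasic generators of $\mf{Cl}(L)$ and $\mf{Fitt}_{\fin}(L)$, and since those subbasic generators determine a frame map out of each of the two subframes, the composite bijection agrees with the map $\beta_{\ca{L}}$ described in the statement.

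The main delicate point to watch is well-definedness of $\beta_{\ca{L}}$ as presented: a priori one must check that prescribing the stated values on the generating closed and open congruences extends to a biframe map $\Acf\ra \bd{2}$. Rather than verify this directly against the presentation of $\Acf$ given by Proposition \ref{assemblyisocf}, the detour through $\Ad$ sidesteps the issue entirely, since the existence of $\tilde{f}$ already provides a frame map on $\mf{A}_{\fin}(L)$ with the required values, and its restrictions to the subframes $\mf{Cl}(L)$ and $\mf{Fitt}_{\fin}(L)$ are automatically frame maps whose pairing constitutes a biframe map into $\bd{2}$. Thus $\beta_{\ca{L}}$ is a composition of two bijections and is itself a bijection.
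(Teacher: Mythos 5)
Your proposal follows essentially the same route as the paper: the paper also obtains $\beta_{\ca{L}}$ by composing the bijection $|\bpt(\ca{L})|\cong|\bpt(\Ad)|$ coming from the universal property of the assembly with the bijection $|\bpt(\Ad)|\cong|\bpt(\Acf)|$ from Lemma \ref{twobiframessamepoints}, and then reads off the explicit values on the subbasic congruences $\na(\syp{x})$ and $\del(\syp{x})$ (the complementation argument you give for the open congruences is exactly the implicit justification). Your remark that the detour through $\Ad$ settles well-definedness for free matches the paper's reasoning, so the proof is correct and essentially identical in structure.
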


We now work towards showing that for every finitary biframe $\ca{L}$ the bijection $\beta_{\ca{L}}:\bpt(\ca{L})\cong \bpt(\Acf)$ constitutes a bihomeomorphism $Sk_{cf}(\bpt(\ca{L}))\cong \bpt(\Acf)$.

\begin{lemma}\label{lemmabihomeo9cf}
For any finitary biframe $\ca{L}$, we have the following.
\begin{itemize}
    \item $\beta_{\ca{L}}[\va{L}{+}{x}]=\varphi_{\Acf}^+(\na(\syp{x}))$,
    \item $\beta_{\ca{L}}[\va{L}{-}{x}]=\varphi_{\Acf}^+(\na(\sym{x}))$,
    \item $\beta_{\ca{L}}[\va{L}{+}{x}^c]=\varphi_{\Acf}^-(\del(\syp{x}))$,
    \item $\beta_{\ca{L}}[\va{L}{-}{x}^c]=\varphi_{\Acf}^-(\del(\sym{x}))$.
\end{itemize}
\end{lemma}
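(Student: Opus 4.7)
The plan is to verify each of the four set equalities by directly unwinding the definition of the bijection $\beta_{\ca{L}}$ together with the definitions of the positive and negative opens of $\bpt(\Acf)$. Recall that by the definition of $\Acf$, the positive component of this biframe is $\mf{Cl}(L)$, so a typical positive open of $\bpt(\Acf)$ is of the form $\varphi^+_{\Acf}(\na(y))$ for some closed congruence $\na(y)$; likewise the negative component is $\mf{Fitt}_{\fin}(L)$, so a typical negative open is of the form $\varphi^-_{\Acf}(\del(z))$ for some finitary fitted congruence. Thus all four sets on the right-hand sides are well-defined basic opens of $\bpt(\Acf)$.

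For the first bullet, I unwind both sides. The set $\va{L}{+}{x}$ is $\{f\in |\bpt(\ca{L})|:f^+(x^+)=1\}$, and $\varphi^+_{\Acf}(\na(\syp{x}))$ is $\{\tilde{g}_{cf}\in |\bpt(\Acf)|:\tilde{g}_{cf}(\na(\syp{x}))=1\}$. By Proposition~\ref{bijectionpointscf}, the map $\tilde{f}_{cf}$ was defined precisely so that $\tilde{f}_{cf}(\na(\syp{x}))=f^+(x^+)$, so the equation $\tilde{f}_{cf}(\na(\syp{x}))=1$ is literally the equation $f^+(x^+)=1$. This gives the claimed equality. The second bullet is identical after replacing $\syp{x}$ with $\sym{x}$ and $f^+$ with $f^-$.

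For the third bullet, $\va{L}{+}{x}^c=\{f:f^+(x^+)=0\}$, and $\varphi^-_{\Acf}(\del(\syp{x}))=\{\tilde{g}_{cf}:\tilde{g}_{cf}(\del(\syp{x}))=1\}$. Again by the definition of $\tilde{f}_{cf}$ we have $\tilde{f}_{cf}(\del(\syp{x}))=\neg f^+(x^+)$, which equals $1$ in $\bd{2}$ exactly when $f^+(x^+)=0$. The fourth bullet follows analogously.

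There is essentially no real obstacle here: the content of the lemma is that the definition of $\beta_{\ca{L}}$ given in Proposition~\ref{bijectionpointscf} was set up to be compatible with the bitopological structure of $\Acf$. The only subtlety is bookkeeping: one must remember that $\na(\syp{x})$ and $\na(\sym{x})$ both live in the \emph{positive} frame $\mf{Cl}(L)$ of $\Acf$, while $\del(\syp{x})$ and $\del(\sym{x})$ live in the \emph{negative} frame $\mf{Fitt}_{\fin}(L)$ — which explains why items (1) and (2) appear on the positive side and items (3) and (4) on the negative side, in contrast with the original bitopology of $\bpt(\ca{L})$ in which positive opens and positive closed sets both correspond to positive generators.
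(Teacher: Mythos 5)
Your proof is correct and follows essentially the same route as the paper's: unwind $\beta_{\ca{L}}$ and the subbasic opens of $\bpt(\Acf)$, and use the defining equations $\tilde{f}_{cf}(\na(\syp{x}))=f^+(x^+)$ and $\tilde{f}_{cf}(\del(\syp{x}))=\neg f^+(x^+)$ from Proposition~\ref{bijectionpointscf} (together with the surjectivity of $\beta_{\ca{L}}$, which you use implicitly when writing every point of $\Acf$ as some $\tilde{g}_{cf}$). The paper's proof is the same computation written as a chain of set equalities for the first and third items, with the others left as analogous.
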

\begin{proof}
let us show the first and the third item.
\begin{itemize}
    \item For $x^+\in L^+$, we have the following chain of equalities. For the equality between the second and the third line, we have used the fact that by Proposition \ref{bijectionpointscf} we know that $\beta_{\ca{L}}:f\mapsto \tilde{f}_{cf}$ is a bijection between the points of $\ca{L}$ and those of $\Acf$, and so it is surjective.  For the equality after that, we have used the definition of $\beta_{\ca{L}}$ given in Proposition \ref{bijectionpointscf}.
    \begin{align*}
        & \beta_{\ca{L}}[\va{L}{+}{x}]=\\
        & =\{\tilde{f}_{cf}\in \bpt(\Acf): f\in \bpt(\ca{L}),f^+(x^+)=1\}=\\
        & =\{\tilde{f}_{cf}\in \bpt(\Acf):\tilde{f}_{cf}(\na(\syp{x}))=1\}=\\
        & =\varphi^{+}_{\Acf}(\na(\syp{x})).
    \end{align*}
    \item For $x^+\in L^+$, we have the following chain of equalities. For the equality between the second and the third line, we have used the fact that by Lemma \ref{bijectionpointscf} we know that $\beta_{\ca{L}}:f\mapsto \tilde{f}_{cf}$ is a bijection between the points of $\ca{L}$ and those of $\Acf$, and so it is surjective. We have also used the definition of $\beta_{\ca{L}}$.
    \begin{align*}
        & \beta_{\ca{L}}[\va{L}{+}{x}^c]=\\
        & =\{\tilde{f}_{cf}\in \bpt(\Acf): f\in \bpt(\ca{L}),f^+(x^+)=0\}=\\
        & =\{\tilde{f}_{cf}\in \bpt(\Acf):\tilde{f}_{cf}(\del(\syp{x}))=1\}=\\
        & =\varphi^{-}_{\Acf}(\del(\syp{x})).\qedhere
    \end{align*}
\end{itemize}
\end{proof}

\begin{proposition}
For any finitary biframe $\ca{L}$ the bijection $\beta_{\ca{L}}$ is a bihomeomorphism $\mf{bpt}(\Acf)\cong Sk_{cf}(\mf{bpt}(\ca{L}))$.
\end{proposition}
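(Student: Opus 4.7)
The plan is to leverage the pointwise bijection $\beta_{\ca{L}}$ from Proposition \ref{bijectionpointscf} together with the image computations of Lemma \ref{lemmabihomeo9cf}. Since $\beta_{\ca{L}}$ is already a bijection on underlying sets, all that remains is to verify that it preserves and reflects both topologies; equivalently, that it sends a subbase for each topology of $Sk_{cf}(\bpt(\ca{L}))$ onto a subbase for the corresponding topology of $\bpt(\Acf)$.

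First, I would identify convenient subbases on both sides. By construction of the closed-fitted Skula bispace, the positive topology of $Sk_{cf}(\bpt(\ca{L}))$ is the patch topology of $\bpt(\ca{L})$, hence generated by
\[
\{\va{L}{+}{x}:x^+\in L^+\}\cup \{\va{L}{-}{x}:x^-\in L^-\},
\]
while its negative topology is generated by the positive and negative closed sets
\[
\{\va{L}{+}{x}^c:x^+\in L^+\}\cup \{\va{L}{-}{x}^c:x^-\in L^-\}.
\]
On the $\bpt(\Acf)$ side, the positive frame $\mf{Cl}(L)$ is generated by the closed congruences $\na(\syp{x})$ and $\na(\sym{x})$, and since the spectral map $\varphi^+_{\Acf}$ preserves arbitrary joins and finite meets, the positive opens of $\bpt(\Acf)$ are subbased by $\{\varphi^+_{\Acf}(\na(\syp{x}))\}\cup\{\varphi^+_{\Acf}(\na(\sym{x}))\}$. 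Similarly, $\mf{Fitt}_{\fin}(L)$ is generated by $\del(\syp{x})$ and $\del(\sym{x})$, so the negative opens of $\bpt(\Acf)$ are subbased by $\{\varphi^-_{\Acf}(\del(\syp{x}))\}\cup \{\varphi^-_{\Acf}(\del(\sym{x}))\}$.

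Then I would simply read off Lemma \ref{lemmabihomeo9cf}: its four bullets state precisely that $\beta_{\ca{L}}$ sends each of the four subbasic families listed above bijectively onto its counterpart. Combined with the fact that $\beta_{\ca{L}}$ is a set-theoretic bijection, sending subbase to subbase on each side suffices to force both $\beta_{\ca{L}}$ and $\beta_{\ca{L}}^{-1}$ to be bicontinuous, and so $\beta_{\ca{L}}$ is a bihomeomorphism.

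There is no real obstacle here; the proof is essentially the packaging of earlier results. The only point requiring a moment of care is remarking that, because $\mf{Cl}(L)$ and $\mf{Fitt}_{\fin}(L)$ are frame-generated by the stated families of congruences and because the spectral maps $\varphi^{\pm}_{\Acf}$ preserve the frame structure, those families really do produce subbases on the $\bpt(\Acf)$ side.
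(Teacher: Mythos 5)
Your proof is correct and follows essentially the same route as the paper's: both reduce the claim to matching subbases via Lemma \ref{lemmabihomeo9cf}, using the fact that the spectral maps preserve the frame operations so that generators of $\mf{Cl}(L)$ and $\mf{Fitt}_{\fin}(L)$ yield subbases of $\bpt(\Acf)$. You are slightly more thorough in spelling out the negative topology explicitly, where the paper treats only the positive case and leaves the other symmetric half implicit.
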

\begin{proof}
To show that the bijection $\beta_{\ca{L}}$ is a bihomeomorphism, it suffices to show that the subbasic opens of each topology of $\bpt(\Acf)$ is the forward image of the subbasic opens of the corresponding topology of $Sk_{cf}(\bpt(\ca{L}))$. Each component of the bispatialization map turns finite meets into finite intersections and arbitrary joins into arbitrary unions. This means that a subbasis of the positive topology of $\bpt(\Acf)$ is given by 
\[
\{\varphi_{\Acf}^{+}(\na(\syp{x})):x^+\in L^+\}\cup \{\varphi_{\Acf}^{+}(\na(\sym{x})):x^-\in L^-\}.
\]
By definition of the closed-fitted Skula bispace, and by Lemma \ref{lemmabihomeo9cf}, these indeed are the forward images under $\beta_{\ca{L}}$ of the subbasic opens of the positive topology of $Sk_{cf}(\bpt(\ca{L}))$.
\end{proof}
 Finally, we check that the assignment $\beta:\mf{Obj}(\bd{BiFrm_{fin}})\ra \mf{Mor}(\bd{BiTop})$ defined as $\ca{L}\mapsto \beta_{\ca{L}}$ is a natural bijection.

\begin{theorem}
The following square commutes up to natural isomorphism.
\[
\begin{tikzcd}[row sep=large, column sep = large]
\bd{BiFrm_{fin}}^{op} 
\arrow{r}{\mf{bpt}} 
\arrow[swap]{d}{\mf{A}_{cf}} 
& \bd{BiTop} 
\arrow{d}{Sk_{cf}} \\
\bd{BiFrm_{fin}}^{op}   
\arrow{r}{\mf{bpt}} & 
\bd{BiTop}
\end{tikzcd}
\]
\end{theorem}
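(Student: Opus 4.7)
The plan is to take the family of bihomeomorphisms $\beta_{\ca{L}} : \bpt(\Acf) \cong Sk_{cf}(\bpt(\ca{L}))$ already established in the previous proposition as the candidate natural isomorphism, so only the naturality square has to be verified. Concretely, for a morphism $\pa{f}:\ca{L}\ra \ca{M}$ of finitary biframes I need to show that the square
\[
\begin{tikzcd}[row sep=large, column sep = large]
\bpt(\mf{A}_{cf}(\ca{M})) \arrow{r}{\beta_{\ca{M}}} \arrow[swap]{d}{\bpt(\mf{A}_{cf}(f))} & Sk_{cf}(\bpt(\ca{M})) \arrow{d}{Sk_{cf}(\bpt(f))} \\
\bpt(\mf{A}_{cf}(\ca{L})) \arrow{r}{\beta_{\ca{L}}} & Sk_{cf}(\bpt(\ca{L}))
\end{tikzcd}
\]
commutes on underlying sets. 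Since the Skula construction leaves underlying sets unchanged and the vertical arrow on the right is just $g \mapsto g\circ f$, while the vertical arrow on the left is $\tilde{g}_{cf}\mapsto \tilde{g}_{cf}\circ \mf{A}_{cf}(f)$, the question reduces to the identity
\[
\widetilde{(g\circ f)}_{cf} \;=\; \tilde{g}_{cf}\circ \mf{A}_{cf}(f)
\]
for every bipoint $g:\ca{M}\ra \bd{2}$.

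To establish this identity, I would argue that both sides are biframe maps $\Acf\ra \bd{2}$ and so, by the universal property of $\Acf$ as a free biframe over the subbases $\{\na(\syp{x}),\na(\sym{x})\}$ and $\{\del(\syp{x}),\del(\sym{x})\}$, it suffices to check agreement on these generators. Using the definition of $\mf{A}_{cf}(f)$ as the restriction of $\mf{A}(f)$ to $\mf{Cl}$ and $\mf{Fitt}_{\fin}$ -- which on generators acts as $\na(\syp{x})\mapsto \na(\sy{f^+(x^+)})$ and $\del(\syp{x})\mapsto \del(\sy{f^+(x^+)})$ -- together with the explicit formula for $\beta_{\ca{L}}$ from Proposition \ref{bijectionpointscf}, one computes
\[
(\tilde{g}_{cf}\circ \mf{A}_{cf}(f))(\na(\syp{x}))=\tilde{g}_{cf}(\na(\sy{f^+(x^+)}))=g^+(f^+(x^+))=(g\circ f)^+(x^+),
\]
which equals $\widetilde{(g\circ f)}_{cf}(\na(\syp{x}))$; similarly for $\del(\syp{x})$ and for the negative generators. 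This is the naturality of $\beta$ on points.

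The only remaining issue is that naturality must be witnessed in $\bd{BiTop}$ rather than only at the level of underlying sets, but the previous proposition already shows each $\beta_{\ca{L}}$ is a bihomeomorphism, so once the set-theoretic square commutes we are done. I do not anticipate a real obstacle: the computation is a routine unwinding of definitions, and the step that requires slight care is justifying that a biframe map out of $\Acf$ is determined by its values on the subbasic closed and finitary fitted congruences, which is immediate from the fact that these two families generate $\mf{Cl}(L)$ and $\mf{Fitt}_{\fin}(L)$ respectively, and these two frames generate the patch $\mf{A}_{\fin}(L)$ by Lemma \ref{manysubframes6}.
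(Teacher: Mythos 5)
Your proposal is correct and follows essentially the same route as the paper: the same candidate natural isomorphism $\beta$, the same reduction of naturality to the pointwise identity $\widetilde{(g\circ f)}_{cf}=\tilde{g}_{cf}\circ \mf{A}_{cf}(f)$, and the same generator-by-generator computation using the explicit formula for $\beta_{\ca{L}}$ and the action of $\mf{A}_{cf}(f)$ on $\na(\syp{x})$. The only cosmetic differences are that you orient $\beta$ in the opposite direction (harmless, since it is a bijection) and that you also check the $\del$-generators explicitly, whereas the paper observes that a bipoint of $\Acf$ is already determined by its values on the closed congruences.
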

\begin{proof}
Suppose that there is a morphism $f:\ca{L}\ra \ca{M}$ in $\bd{BiFrm_{fin}}$. The naturality square is as follows.
\[
\begin{tikzcd}[row sep=large, column sep = large]
Sk_{cf}(\bpt(\ca{M}))
\arrow{r}{\beta_{\ca{M}}} 
\arrow[swap]{d}{Sk_{cf}(\bpt(f))} 
& \bpt(\mf{A}_{cf}(\ca{M}))
\arrow{d}{\bpt(\mf{A}_{cf}(f))} \\
Sk_{cf}(\bpt(\ca{L}))  
\arrow{r}{\beta_{\ca{L}}} & 
\bpt(\mf{A}_{cf}(\ca{L}))
\end{tikzcd}
\]
Since this amounts to an equality of functions, commutativity of this diagram amounts to commutativity of the following square in $\bd{Set}$. We have also used the definition of the functor $\bpt$.
\[
\begin{tikzcd}[row sep=large, column sep = large]
\bpt(\ca{M})
\arrow{r}{\beta_{\ca{M}}} 
\arrow[swap]{d}{-\circ f} 
& \bpt(\mf{A}_{cf}(\ca{M}))
\arrow{d}{-\circ\mf{A}(f)} \\
\bpt(\ca{L})) 
\arrow{r}{\beta_{\ca{L}}} & 
\bpt(\mf{A}_{cf}(\ca{L}))
\end{tikzcd}
\]
Suppose, then, that $g\in \bpt(\ca{M})$. We need to check that $\beta_{\ca{M}}(g)\circ \mf{A}_{cf}(f)=\beta_{\ca{L}}(g\circ f)$. To check that this is an equality of points of $\Acf$, it suffices to show that these two maps agree on where they map $\na(\syp{x})$, since every bipoint $h:\Acf\ra \bd{2}$ is completely determined to where it maps the closed congruences. For $x^+\in L^+$, we have the following chain of equalities.
\begin{align*}
    & \beta_{\ca{M}}(g)^+(\mf{A}_{cf}(f)^+(\na(\syp{x})))=\\
    & =\beta_{\ca{M}}(g)^+(\na(\sy{f^+(x^+)}))=\\
    & =\tilde{g}_{cf}^+(\na(\sy{f^+(x^+)}))=\\
    & =g^+(f^+(x^+))=\\
    & =(\widetilde{g\circ f})_{cf}^+(\na(\syp{x})).\qedhere
\end{align*}
\end{proof}

\subsection{The $\mf{A}_{\pm}$ functor and its spatial counterpart}

Recall that for a bispace $X$ its \tc{positive-negative Skula bispace} is the bispace $Sk_{\pm}(X)$ such that it has the same underlying set of points as $X$, whose positive topology is the one generated by the positive and opens of $X$ together with its negative closed sets (the monotopological Skula space of $(X,\Om^+(X))$), and whose negative topology is that generated by the negative opens of $X$ together with the negative closed ones (the monotopological Skula space of $(X,\Om^-(X))$). We now define the pointfree version of this functor, taking the duality of finitary biframes as our pointfree setting. For a finitary biframe $\ca{L}$, we define its \tc{positive-negative assembly} to be the biframe
\[
\Apm=(\sy{\mf{A}(L^+)},\sy{\mf{A}(L^-)},\mf{A}_{\fin}(L)).
\]
Here $\sy{\mf{A}(L^+)}$ is the image of $\mf{A}(L^+)$ under the map $\mf{A}(e^+_L):\mf{A}(L^+)\ra \mf{A}(L)$, where $e^+_L:L^+\ra L$ is the canonical embedding implicit in the definition of a biframe. We notice that all elements of $\sy{\mf{A}(L^+)}$ are finitary, and that since $\mf{A}(e^+_L)$ is a frame map, the subframe $\sy{\mf{A}(L^+)}$ of $\mf{A}_{\fin}(L)$ is the one generated by elements of the form $\na(\syp{a})$ together with those of the form $\del(\syp{a})$. 

\begin{proposition}\label{assemblyisopm}
For a finitary biframe $\ca{L}$ we have an isomorphism of biframes
\[
\Apm\cong (L^+\oplus \Fip,L^-\oplus \Fim,L^+\oplus \Fip,L^-\oplus \Fim)/(C_L\cup \mc{Com}^+_L\cup \mc{Com}^-_L),
\]
\end{proposition}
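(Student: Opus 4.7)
The plan is to mimic closely the proof strategy used for Proposition \ref{assemblyisocf}, building on the isomorphism from Proposition \ref{assemblyiso}. The main component of the biframe on the right-hand side is (modulo what appears to be a typographical slip in the statement) the same four-fold coproduct $L^+\oplus \Fim\oplus L^-\oplus \Fip$ quotiented by $C_L\cup \mc{Com}^+_L\cup \mc{Com}^-_L$, which by Proposition \ref{assemblyiso} is already known to be isomorphic to $\mf{A}_{\fin}(L)$, the main component of $\Apm$. So the task reduces to identifying, inside that free construction, which subframes correspond to $\sy{\mf{A}(L^+)}$ and $\sy{\mf{A}(L^-)}$.

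First I would recall the explicit form of the isomorphism supplied by Proposition \ref{assemblyiso}: it sends the generator $\sy{\syp{x}}$ coming from the $L^+$ summand of the free construction to $\na(\syp{x})$, and it sends the generator $\sy{\syp{x}}$ coming from the $\Fip$ summand to $\del(\syp{x})$; analogously for negative generators. Under this map, the subframe $L^+\oplus \Fip$ of the four-fold coproduct (regarded as a subframe via the two obvious coproduct inclusions) is sent to the subframe of $\mf{A}_{\fin}(L)$ generated by all $\na(\syp{x})$ together with all $\del(\syp{x})$, i.e. the image of $\mf{A}(e^+_L):\mf{A}(L^+)\ra \mf{A}(L)$, which is by definition $\sy{\mf{A}(L^+)}$. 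A symmetric argument handles the negative component, showing that $L^-\oplus \Fim$ corresponds to $\sy{\mf{A}(L^-)}$.

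Next I would verify that these two subframes of the free construction survive the passage to the quotient in a compatible way, namely that the restriction of the quotient map $q:L^+\oplus \Fim\oplus L^-\oplus \Fip\ra \mf{A}_{\fin}(L)$ to $L^+\oplus \Fip$ has image exactly $\sy{\mf{A}(L^+)}$. This is immediate since the subframe $L^+\oplus \Fip$ is mapped onto the generators of $\sy{\mf{A}(L^+)}$ and $q$ is a frame map. Likewise for the negative side. Together with the agreement of main components, this shows that the three components of $\Apm$ match those of the biframe on the right, with commuting coproduct injections.

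The main obstacle, such as it is, is bookkeeping: one has to be careful to correctly identify the four coproduct summands on each side (the order $L^+, \Fim, L^-, \Fip$ matters for how generators are renamed by the isomorphism of Proposition \ref{assemblyiso}), and to verify that the subframe $L^+\oplus \Fip$ inside the four-fold coproduct $L^+\oplus \Fim\oplus L^-\oplus \Fip$ really does embed via the canonical coproduct inclusion so that its image under the quotient is the required subframe of $\mf{A}_{\fin}(L)$. Once the image-computation $\sy{\mf{A}(L^+)} = \text{subframe generated by } \{\na(\syp{x}),\del(\syp{x}):x^+\in L^+\}$ noted in the paragraph preceding the proposition is invoked, the identification of the two generating subframes is essentially forced, and the isomorphism of biframes follows.
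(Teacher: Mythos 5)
Your proposal is correct and follows essentially the same route as the paper's own proof: both reduce the main-component identification to Proposition \ref{assemblyiso} and then track the explicit action of that isomorphism on generators ($\sy{\syp{x}}\mapsto \na(\syp{x})$ for the $L^+$ summand and the principal-filter generators of $\Fip$ mapping to $\del(\syp{x})$) to conclude that $L^+\oplus \Fip$ lands exactly on the subframe generated by $\{\na(\syp{x}),\del(\syp{x}):x^+\in L^+\}$, i.e. $\sy{\mf{A}(L^+)}$, with the symmetric argument for the negative component. Your observation that the displayed right-hand side contains a typographical slip (a comma where a coproduct $\oplus$ is intended) is also accurate.
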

\begin{proof}
The fact that the two main components of our two biframes agree follows from Proposition \ref{assemblyiso}. The isomorphism given by this proposition acts as $\sy{\syp{x}}\mapsto \na(\syp{x})$ and $\sy{\syp{\up x}}\mapsto \del(\syp{x})$. This means that under this isomorphism the frame $L^+\oplus \Fip$ is mapped to the subframe of $\mf{A}_{\fin}(L)$ generated by the elements of the form $\na(\syp{x})$ together with those of the form $\del(\syp{x})$. This is the frame $\sy{\mf{A}(L^+)}$.
\end{proof}

\begin{proposition}
The closed-fitted assembly of a finitary biframe is finitary. 
\end{proposition}
\begin{proof}
This follows from Proposition \ref{assemblyisopm} and by Lemma \ref{finitarylemma9}.
\end{proof}

We then have the object part of the desired endofunctor. To define the assignment $\mf{A}_{\pm}$ on morphisms, we use the following lemma. For a biframe $\ca{L}$, we call \tc{positive} congruences the congruences on $L$ in the collection $\sy{\mf{A}(L^+)}$. For a biframe morphism $f:\ca{L}\ra \ca{M}$, we define the morphism $\mf{A}_{\pm}(f):\Apm\ra \mf{A}_{\pm}(\ca{M})$ as the pairing of the restriction of the frame map $\mf{A}(f):\mf{A}(L)\ra \mf{A}(M)$ to the two subframes $\sy{\mf{A}(L^+)}$ and $\sy{\mf{A}(L^-)}$. With the next lemma, we check that this is a well-defined biframe map.

\begin{lemma}
For a frame map $f:L\ra M$, the map $\mf{A}(f):\mf{A}(L)\ra \mf{A}(M)$ maps positive congruences to positive ones and negative ones to negative ones.
\end{lemma}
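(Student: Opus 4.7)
The plan is to reduce the claim to its behaviour on a generating set of subbasic congruences and then invoke the fact that $\mf{A}(f)$ is a frame homomorphism. Recall that, by the description of $\sy{\mf{A}(L^+)}$ given just before Proposition \ref{assemblyisopm}, the subframe of positive congruences of $L$ is the subframe of $\mf{A}_{\fin}(L)$ generated by the set
\[
\{\na(\syp{a}):a^+\in L^+\}\cup \{\del(\syp{a}):a^+\in L^+\},
\]
and symmetrically for negative congruences.

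First, I would verify the claim on subbasic generators. By the definition of the assembly functor on frame morphisms, $\mf{A}(f)$ sends $\na(x)\mapsto \na(f(x))$ and $\del(x)\mapsto \del(f(x))$. Since $f:\ca{L}\ra \ca{M}$ is a biframe map, the defining property recorded in the Preliminaries gives $f(\syp{a})=\sy{f^+(a^+)}$ for every $a^+\in L^+$. Therefore
\[
\mf{A}(f)(\na(\syp{a}))=\na(\sy{f^+(a^+)}),\qquad \mf{A}(f)(\del(\syp{a}))=\del(\sy{f^+(a^+)}),
\]
both of which lie in $\sy{\mf{A}(M^+)}$ by the description above. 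Hence the generators of $\sy{\mf{A}(L^+)}$ are sent into $\sy{\mf{A}(M^+)}$.

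Second, I would promote this to the whole subframe. Since $\mf{A}(f)$ is a frame homomorphism and $\sy{\mf{A}(M^+)}$ is closed under arbitrary joins and finite meets in $\mf{A}_{\fin}(M)$, the forward image of the subframe generated by the subbasic positive congruences is contained in $\sy{\mf{A}(M^+)}$. This yields the positive half of the statement. The negative half is entirely symmetric, replacing $L^+$ by $L^-$ throughout. No step is subtle; the only point to be careful about is the appeal to the biframe-morphism identity $f(\syp{a})=\sy{f^+(a^+)}$, which ensures that generators of positive form are mapped to generators of positive form in the codomain.
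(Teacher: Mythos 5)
Your proposal is correct and follows essentially the same route as the paper: both arguments reduce to the action of $\mf{A}(f)$ on the generators $\na(\syp{a})$ and $\del(\syp{a})$ (via the biframe-morphism identity $f(\syp{a})=\sy{f^+(a^+)}$) and then use that $\mf{A}(f)$ is a frame homomorphism to extend to the whole subframe, the paper merely phrasing this by writing a general positive congruence in the canonical form $\bve_i \na(\syp{x_i})\cap \del(\syp{y_i})$ and computing its image directly.
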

\begin{proof}
Since $\mf{A}(f)$ is a frame map, and by its definition, we have that it maps a positive congruence $\bve_i \na(\syp{x_i})\cap \del(\syp{y_i})$ to the congruence $\bve_i \na(\sy{f^+(x^+_i)})\cap \del(\sy{f^+(y^+_i)})$, which is a positive congruence of $M$.
\end{proof}

By its definition, the assignment $\mf{A}_{\pm}:\mf{Mor}(\bd{BiFrm_{fin}})\ra \mf{Mor}(\bd{BiFrm_{fin}})$ also respects identities and compositions. We have proved the following.

\begin{proposition}
The assignment $\mf{A}_{\pm}:\bd{BiFrm_{fin}}\ra \bd{BiFrm_{fin}}$ is a functor.
\end{proposition}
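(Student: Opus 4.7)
The plan is to leverage three facts that have already been established in the excerpt: (i) the classical assembly $\mf{A}:\bd{Frm}\ra \bd{Frm}$ is an endofunctor on frames; (ii) for a frame map $f:L\ra M$, the induced map $\mf{A}(f):\mf{A}(L)\ra \mf{A}(M)$ sends finitary congruences to finitary congruences, so it restricts to a well-defined frame map $\mf{A}_{\fin}(L)\ra \mf{A}_{\fin}(M)$; and (iii) the preceding lemma, which shows that when $f:\ca{L}\ra \ca{M}$ is a biframe map, $\mf{A}(f)$ sends positive (resp.\ negative) congruences of $L$ to positive (resp.\ negative) congruences of $M$. From (iii), the two restrictions $\mf{A}(f)|_{\sy{\mf{A}(L^+)}}:\sy{\mf{A}(L^+)}\ra \sy{\mf{A}(M^+)}$ and $\mf{A}(f)|_{\sy{\mf{A}(L^-)}}:\sy{\mf{A}(L^-)}\ra \sy{\mf{A}(M^-)}$ are well-defined frame maps, and together with the restriction to $\mf{A}_{\fin}(L)$ they are by construction what we are calling $\mf{A}_{\pm}(f)$.

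Next I would verify that $\mf{A}_{\pm}(f)$ really is a biframe morphism $\Apm\ra \mf{A}_{\pm}(\ca{M})$. This amounts to checking the compatibility squares between the three component maps and the two subframe inclusions $\sy{\mf{A}(L^{\pm})}\inclu \mf{A}_{\fin}(L)$. Since all three components are literally restrictions of the single frame map $\mf{A}(f):\mf{A}(L)\ra \mf{A}(M)$, these squares commute automatically; concretely, for any positive congruence $D\in \sy{\mf{A}(L^+)}$, the element $\mf{A}_{\pm}(f)^+(D)$ is equal (as an element of $\mf{A}_{\fin}(M)$) to $\mf{A}(f)(D)$, which is exactly the image under the main-component map. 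The analogous statement for negative congruences is identical.

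Finally I would dispatch functoriality by reduction to the functoriality of the classical assembly. For identities, $\mf{A}(\id_L)=\id_{\mf{A}(L)}$, whose restrictions to $\sy{\mf{A}(L^+)}$ and $\sy{\mf{A}(L^-)}$ are the identities on these subframes; hence $\mf{A}_{\pm}(\id_{\ca{L}})=\id_{\Apm}$. For composition, if $\ca{L}\xrightarrow{f}\ca{M}\xrightarrow{g}\ca{N}$ are biframe maps, then $\mf{A}(g\circ f)=\mf{A}(g)\circ \mf{A}(f)$ as frame maps $\mf{A}(L)\ra \mf{A}(N)$, and restricting an equality of maps to a subframe preserves the equality, so the positive components of $\mf{A}_{\pm}(g\circ f)$ and $\mf{A}_{\pm}(g)\circ \mf{A}_{\pm}(f)$ agree on all of $\sy{\mf{A}(L^+)}$, and symmetrically for the negative and main components.

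I do not anticipate a substantive obstacle here: the entire argument is a bookkeeping exercise that piggybacks on the classical assembly functor, with the only nontrivial input being the preservation lemma for positive and negative congruences stated just before the proposition. If anything needs slightly careful phrasing, it is the observation that the biframe structure of $\Apm$ has been arranged precisely so that the two subframe inclusions are restrictions of the identity on $\mf{A}_{\fin}(L)$, which is what makes the compatibility squares collapse to trivialities.
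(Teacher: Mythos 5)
Your proposal is correct and follows essentially the same route as the paper: define $\mf{A}_{\pm}(f)$ as the pairing of the restrictions of $\mf{A}(f)$ to $\sy{\mf{A}(L^+)}$ and $\sy{\mf{A}(L^-)}$, use the preceding preservation lemma for positive and negative congruences to get well-definedness, and inherit the identity and composition laws from the functoriality of the classical assembly. The paper leaves the compatibility squares and the functoriality checks implicit ("by its definition"), so your write-up is simply a more explicit version of the same argument.
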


We now analyze the bispectrum of the positive-negative assembly of a finitary biframe. Because of Lemma \ref{assemblyiso} and \ref{assemblyisopm}, we know that $\Ad$ and $\Apm$ have the same patch. Similarly as in the closed-fitted assembly case, we observe that by the proof of Lemma \ref{twobiframessamepoints} we have a bijection $\bpt(\Ad)\cong \bpt(\Apm)$ given by $\tilde{f}\mapsto \tilde{f}_{\pm}$, where $\tilde{f}_{\pm}$ is the pairing of the restrictions of the frame map $\tilde{f}:\mf{A}_{\fin}(\ca{L})\ra 2$ to the two subframes $\sy{\mf{A}(L^+)}$ and $\sy{\mf{A}(L^-)}$. We have proved the following fact.

\begin{proposition}\label{bijectionassemblypm9}
The assignment $\gamma_{\ca{L}}:f\mapsto \tilde{f}_{\pm}$, with this map defined as
\begin{align*}
    & \tilde{f}_{\pm}:\Apm\ra \bd{2}\\
    & \na(\syp{x})\mapsto f^+(x^+)\\
    & \del(\syp{x})\mapsto \neg f^+(x^+),
\end{align*}
and similarly on the congruences of the form $\na(\sym{x})$ and $\del(\sym {x})$, constitutes a bijection $|\bpt(\ca{L})|\cong |\bpt(\mf{A}_{\pm}(\ca{L}))|$.
\end{proposition}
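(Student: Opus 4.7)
The plan is to mirror the argument already given for the closed-fitted assembly in Proposition \ref{bijectionpointscf}, leveraging the fact that $\Ad$ and $\Apm$ share the same main component $\mf{A}_{\fin}(L)$. The argument proceeds in three short steps.

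First, by Proposition \ref{assemblyisopm}, the main component of $\Apm$ is canonically isomorphic to that of $\Ad$, namely $\mf{A}_{\fin}(L)$. Applying Lemma \ref{twobiframessamepoints} to this pair of biframes yields a bijection $|\bpt(\Ad)|\cong |\bpt(\Apm)|$ which, by construction, sends a bipoint $\tilde{f}:\Ad\ra \bd{2}$ to the pairing $\tilde{f}_{\pm}$ of the restrictions of the underlying frame map $\mf{A}_{\fin}(L)\ra 2$ to the two subframes $\sy{\mf{A}(L^+)}$ and $\sy{\mf{A}(L^-)}$.

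Second, by the universal property of the assembly recalled before Proposition \ref{bijectionpointscf}, every bipoint $f:\ca{L}\ra \bd{2}$ extends uniquely to a bipoint $\tilde{f}:\Ad\ra \bd{2}$, so that the assignment $f\mapsto \tilde{f}$ is a bijection $|\bpt(\ca{L})|\cong |\bpt(\Ad)|$. Composing this with the bijection of the previous step, we obtain a bijection $|\bpt(\ca{L})|\cong |\bpt(\Apm)|$ given by $f\mapsto \tilde{f}_{\pm}$, which is precisely the assignment $\gamma_{\ca{L}}$.

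It remains to verify that $\gamma_{\ca{L}}$ is described by the formulas in the statement. Since the generators of $\sy{\mf{A}(L^+)}$ are the congruences of the form $\na(\syp{x})$ and $\del(\syp{x})$ for $x^+\in L^+$, and similarly for $\sy{\mf{A}(L^-)}$, it suffices to compute $\tilde{f}$ on these. By definition of $\tilde{f}$ as the canonical factorization of $f$ through $\na:\ca{L}\ra \Ad$, we have $\tilde{f}(\na(\syp{x}))=f^+(x^+)$; and since $\tilde{f}$ provides a complement for $\na(\syp{x})$, namely $\del(\syp{x})$, we get $\tilde{f}(\del(\syp{x}))=\neg f^+(x^+)$. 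The analogous equalities hold on negative generators. This matches the formulas in the statement and concludes the proof. The only subtle point, which the appeal to Lemma \ref{twobiframessamepoints} handles cleanly, is checking that a bipoint of $\Apm$ is indeed determined by its restrictions to the two subframes $\sy{\mf{A}(L^\pm)}$; this is exactly because these subframes jointly generate the main component $\mf{A}_{\fin}(L)$.
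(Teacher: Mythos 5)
Your proposal is correct and follows essentially the same route as the paper: the paper also obtains the bijection by combining the universal property of the assembly (every bipoint of $\Ad$ is of the form $\tilde{f}$ for a unique bipoint $f$ of $\ca{L}$) with Lemma \ref{twobiframessamepoints} applied to $\Ad$ and $\Apm$, which share the main component $\mf{A}_{\fin}(L)$ by Propositions \ref{assemblyiso} and \ref{assemblyisopm}, and then reads off the formulas on the generating congruences $\na(\syp{x})$ and $\del(\syp{x})$ using complementation in the assembly. Your write-up is in fact slightly more explicit than the paper's, which compresses these steps into the paragraph preceding the proposition.
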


We now work towards showing that for every finitary biframe $\ca{L}$ the bijection $\gamma_{\ca{L}}$ actually is a bihomeomorphism $Sk_{\pm}(\bpt(\ca{L}))\cong \bpt(\Apm)$.

\begin{lemma}\label{lemmabihomeo9pm}
For any finitary biframe $\ca{L}$, we have the following.
\begin{itemize}
    \item $\gamma_{\ca{L}}[\va{L}{+}{x}]=\varphi_{\Apm}^+(\na(\syp{x}))$,
    \item $\gamma_{\ca{L}}[\va{L}{+}{x}^c]=\varphi_{\Apm}^+(\del(\syp{x}))$,
    \item $\gamma_{\ca{L}}[\va{L}{-}{x}]=\varphi_{\Apm}^-(\na(\sym{x}))$,
    \item $\gamma_{\ca{L}}[\va{L}{-}{x}^c]=\varphi_{\Apm}^-(\del(\sym{x}))$.
\end{itemize}
\end{lemma}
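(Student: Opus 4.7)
The plan is to follow the exact same three-step chain used in the proof of Lemma \ref{lemmabihomeo9cf}, adapted to the positive-negative decomposition. For each of the four items, I would (i) unfold the left-hand side using the definition of $\varphi_{\ca{L}}^{\pm}$ and the bijection $\gamma_{\ca{L}}$ from Proposition \ref{bijectionassemblypm9}, (ii) translate the condition on $f$ into a condition on $\tilde{f}_{\pm}$ using the formulas specified in that proposition, and (iii) recognize the resulting set as a subbasic open of one of the two topologies of $\bpt(\Apm)$.

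For the first item, I would compute
\begin{align*}
\gamma_{\ca{L}}[\va{L}{+}{x}]
&= \{\tilde{f}_{\pm}\in\bpt(\Apm): f\in \bpt(\ca{L}),\ f^+(x^+)=1\}\\
&= \{\tilde{f}_{\pm}\in\bpt(\Apm): \tilde{f}_{\pm}(\na(\syp{x}))=1\}\\
&= \varphi_{\Apm}^+(\na(\syp{x})).
\end{align*}
The second equality uses the surjectivity of $\gamma_{\ca{L}}$ and the defining clause $\na(\syp{x})\mapsto f^+(x^+)$. The third equality uses that $\na(\syp{x})$ lies in $\sy{\mf{A}(L^+)}$, which is by definition the positive component of $\Apm$, so $\varphi_{\Apm}^+(\na(\syp{x}))$ is a well-defined subbasic positive open. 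The second item is completely analogous, using the clause $\del(\syp{x})\mapsto \neg f^+(x^+)$ so that $\tilde{f}_{\pm}(\del(\syp{x}))=1$ if and only if $f^+(x^+)=0$; the key point is that $\del(\syp{x})$, too, lies in $\sy{\mf{A}(L^+)}$, so it still contributes to the \emph{positive} topology of $\bpt(\Apm)$. Items three and four are symmetric, carried out in the subframe $\sy{\mf{A}(L^-)}$.

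The only subtle point, and the main conceptual difference with Lemma \ref{lemmabihomeo9cf}, is the placement of the open congruences. In the closed-fitted case, $\del(\syp{x})$ belonged to the \emph{negative} component $\mf{Fitt}_{\fin}(L)$, so the lemma mixed $\varphi^+$ and $\varphi^-$ on the right-hand side according to whether the congruence was closed or fitted. Here, by contrast, both $\na(\syp{x})$ and $\del(\syp{x})$ belong to the same subframe $\sy{\mf{A}(L^+)}$, and both appear on the positive side of $\Apm$; similarly for the negative side. This requires only checking that $\sy{\mf{A}(L^+)}$ indeed contains $\del(\syp{x})$, which is precisely the content of the description of this subframe given just before Proposition \ref{assemblyisopm}. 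Once this is noted, no further obstacle remains and the four equalities follow by the routine chain above.
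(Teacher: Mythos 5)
Your proposal is correct and follows essentially the same argument as the paper's own proof: the identical chain of equalities using the surjectivity of $\gamma_{\ca{L}}$ and the defining clauses of $\tilde{f}_{\pm}$ from Proposition \ref{bijectionassemblypm9}, with both $\na(\syp{x})$ and $\del(\syp{x})$ landing in the positive component $\sy{\mf{A}(L^+)}$. Your explicit remark contrasting the placement of the open congruences with the closed-fitted case is a helpful clarification but does not change the substance of the argument.
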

\begin{proof}
let us show the first and the second item.
\begin{itemize}
    \item For $x^+\in L^+$, we have the following chain of equalities. We have used the fact that $\gamma_{\ca{L}}:f\mapsto \tilde{f}_{\pm}$ is a bijection between the points of $\ca{L}$ and those of $\Apm$, and so it is surjective. 
    \begin{align*}
        & \gamma_{\ca{L}}[\va{L}{+}{x}]=\\
        & =\{\tilde{f}_{\pm}\in \bpt(\Apm): f\in \bpt(\ca{L}),f^+(x^+)=1\}=\\
        & =\{\tilde{f}_{\pm}\in \bpt(\Apm):\tilde{f}_{\pm}(\na(\syp{x}))=1\}=\\
        & =\varphi^{+}_{\Apm}(\na(\syp{x})).
    \end{align*}
    \item For $x^+\in L^+$, we have the following chain of equalities. For the equality between the second and the third line, we have used the fact that by Lemma \ref{bijectionpointscf} we know that $\gamma_{\ca{L}}:f\mapsto \tilde{f}_{cf}$ is a bijection between the points of $\ca{L}$ and those of $\Apm$, and so it is surjective. 
    \begin{align*}
        & \gamma_{\ca{L}}[\va{L}{+}{x}^c]=\\
        & =\{\tilde{f}_{\pm}\in \bpt(\Apm): f\in \bpt(\ca{L}),f^+(x^+)=0\}=\\
        & =\{\tilde{f}_{\pm}\in \bpt(\Apm):\tilde{f}_{\pm}(\del(\syp{x}))=1\}=\\
        & =\varphi^{+}_{\Apm}(\del(\syp{x})).\qedhere
    \end{align*}
\end{itemize}
\end{proof}

\begin{proposition}
For any finitary biframe $\ca{L}$ the bijection $\beta_{\ca{L}}$ is a bihomeomorphism $Sk_{\pm}(\mf{bpt}(\ca{L}))\cong \mf{bpt}(\Apm)$.
\end{proposition}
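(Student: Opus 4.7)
The plan is to follow the argument for the closed-fitted case essentially verbatim. The set-level bijection $\gamma_{\ca{L}}$ is already provided by Proposition \ref{bijectionassemblypm9}, so the task reduces to showing that it transports a subbasis of each topology of $Sk_{\pm}(\bpt(\ca{L}))$ bijectively onto a subbasis of the corresponding topology of $\bpt(\Apm)$.

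First I would identify the subbases on both sides. By definition of $\Apm$, the positive component is $\sy{\mf{A}(L^+)}$, generated inside $\mf{A}_{\fin}(L)$ by the congruences $\na(\syp{x})$ and $\del(\syp{x})$ for $x^+\in L^+$. Since each component of the bispatialization map turns finite meets into finite intersections and arbitrary joins into arbitrary unions, the sets $\varphi^+_{\Apm}(\na(\syp{x}))$ and $\varphi^+_{\Apm}(\del(\syp{x}))$ form a subbasis of the positive topology of $\bpt(\Apm)$. On the bispace side, by definition of $Sk_{\pm}$, the positive topology is the monotopological Skula space of the positive topological reduct of $\bpt(\ca{L})$, so its positive subbasis consists of the sets $\va{L}{+}{x}$ and $\va{L}{+}{x}^c$. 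Exactly the same arguments apply to the negative sides.

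The proof then reduces to a direct invocation of Lemma \ref{lemmabihomeo9pm}, whose four items yield precisely
\[
\gamma_{\ca{L}}[\va{L}{+}{x}]=\varphi^+_{\Apm}(\na(\syp{x})), \quad \gamma_{\ca{L}}[\va{L}{+}{x}^c]=\varphi^+_{\Apm}(\del(\syp{x})),
\]
together with the two analogous identities on the negative side. Combined with bijectivity of $\gamma_{\ca{L}}$, these equalities establish that forward image under $\gamma_{\ca{L}}$ realises a bijection between the two subbases in each component topology, which is all that is needed for $\gamma_{\ca{L}}$ to be a bihomeomorphism.

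I do not anticipate any genuine obstacle, since the bijection, the subbasis computations, and the image identities have all been established in the preceding results. The only small subtlety, absent in the closed-fitted case, is that both $\va{L}{+}{x}$ and its complement $\va{L}{+}{x}^c$ belong to the same (positive) topology of $Sk_{\pm}(\bpt(\ca{L}))$, rather than being split across the positive and negative topologies; but the statement of Lemma \ref{lemmabihomeo9pm} is already organised according to this grouping, so no further bookkeeping is required.
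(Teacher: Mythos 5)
Your proposal is correct and follows essentially the same route as the paper: reduce the bihomeomorphism claim to matching subbases under the forward image of the bijection $\gamma_{\ca{L}}$, compute the subbasis of each topology of $\bpt(\Apm)$ via the fact that the bispatialization components preserve finite meets and arbitrary joins, and then invoke Lemma \ref{lemmabihomeo9pm} together with the definition of $Sk_{\pm}$. Your closing remark about both $\va{L}{+}{x}$ and $\va{L}{+}{x}^c$ landing in the same component topology is a correct observation (and you rightly note the statement's ``$\beta_{\ca{L}}$'' should read $\gamma_{\ca{L}}$, as the proof itself does).
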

\begin{proof}
To show that the bijection $\gamma_{\ca{L}}$ is a bihomeomorphism, it suffices to show that the subbasic opens of each topology of $\bpt(\Apm)$ is the forward image of the subbasic opens of the corresponding topology of $Sk_{\pm}(\bpt(\ca{L}))$. Each component of the bispatialization map turns finite meets into finite intersections and arbitrary joins into arbitrary unions. This means that a subbasis of the positive topology of $\bpt(\Apm)$ is given by 
\[
\{\varphi_{\Apm}^{+}(\na(\syp{x})):x^+\in L^+\}\cup \{\varphi_{\Apm}^{+}(\del(\syp{x})):x^-\in L^-\}.
\]
By definition of the positive-negative Skula bispace, and by Lemma \ref{lemmabihomeo9pm}, these indeed are the forward images under $\gamma_{\ca{L}}$ of the subbasic opens of the positive topology of $Sk_{\pm}(\bpt(\ca{L}))$.
\end{proof}
 Finally, we check that the assignment $\gamma:\mf{Obj}(\bd{BiFrm_{fin}})\ra \mf{Mor}(\bd{BiTop})$ defined as $\ca{L}\mapsto \gamma_{\ca{L}}$ is a natural isomorphism.

\begin{theorem}
The following square commutes up to natural isomorphism.
\[
\begin{tikzcd}[row sep=large, column sep = large]
\bd{BiFrm_{fin}}^{op} 
\arrow{r}{\mf{bpt}} 
\arrow[swap]{d}{\mf{A}_{\pm}} 
& \bd{BiTop} 
\arrow{d}{Sk_{\pm}} \\
\bd{BiFrm_{fin}}^{op}   
\arrow{r}{\mf{bpt}} & 
\bd{BiTop}
\end{tikzcd}
\]
\end{theorem}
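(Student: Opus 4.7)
The plan is to mimic closely the proof of the analogous theorem for $\mf{A}_{cf}$, adapting it to the positive-negative setting. The natural transformation candidate is $\gamma:\mf{Obj}(\bd{BiFrm_{fin}})\ra \mf{Mor}(\bd{BiTop})$ already constructed, $\ca{L}\mapsto \gamma_{\ca{L}}$, which by the preceding proposition is pointwise a bihomeomorphism. So what remains is purely a naturality check: for every biframe morphism $f:\ca{L}\ra\ca{M}$ I would write down the naturality square with $\gamma_{\ca{M}}$ and $\gamma_{\ca{L}}$ on top and bottom, and $Sk_{\pm}(\bpt(f))$ and $\bpt(\mf{A}_{\pm}(f))$ on the sides; since the underlying function of $Sk_{\pm}(\bpt(f))$ is just $-\circ f$, commutativity reduces to an equality in $\bd{Set}$.

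Concretely, I would fix $g\in \bpt(\ca{M})$ and show $\gamma_{\ca{M}}(g)\circ \mf{A}_{\pm}(f)=\gamma_{\ca{L}}(g\circ f)$ as bipoints of $\mf{A}_{\pm}(\ca{L})$. Since every bipoint of $\Apm$ is determined by its values on the generators of the positive and negative subframes, namely on congruences of the form $\na(\syp{x}),\,\del(\syp{x}),\,\na(\sym{x}),\,\del(\sym{x})$, it suffices to check agreement on these four families. For the positive-closed case this is verbatim the chain of equalities from the $\mf{A}_{cf}$ proof:
\begin{align*}
\gamma_{\ca{M}}(g)^+(\mf{A}_{\pm}(f)^+(\na(\syp{x})))
& =\gamma_{\ca{M}}(g)^+(\na(\sy{f^+(x^+)}))\\
& =\tilde{g}_{\pm}^+(\na(\sy{f^+(x^+)}))\\
& =g^+(f^+(x^+))\\
& =(\widetilde{g\circ f})_{\pm}^+(\na(\syp{x})),
\end{align*}
using only the definition of $\mf{A}(f)$ on closed generators and the definition of $\gamma$ from Proposition \ref{bijectionassemblypm9}. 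The main point where this theorem genuinely differs from the closed-fitted analogue is that here $\del(\syp{x})$ also lies in the positive subframe, so one must additionally verify $\gamma_{\ca{M}}(g)^+(\mf{A}_{\pm}(f)^+(\del(\syp{x})))=(\widetilde{g\circ f})_{\pm}^+(\del(\syp{x}))$; but since $\mf{A}(f)(\del(\syp{x}))=\del(\sy{f^+(x^+)})$ and $\gamma$ sends open generators to the negation of the value on the dual closed one, the same computation with $\neg$ inserted works out. The negative-subframe cases are symmetric.

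The hardest part is arguably not any single calculation but bookkeeping: one has to know that $\mf{A}_{\pm}(f)$ is well-defined (handled by the lemma that $\mf{A}(f)$ preserves positive and negative congruences) and that the restriction of $\mf{A}(f)$ to the positive-negative generators really is what $\mf{A}_{\pm}(f)$ does by definition, so that the first equality in the chain above is legitimate. Once these are acknowledged, the four-case verification closes the proof, and naturality of $\gamma$ follows.
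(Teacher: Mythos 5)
Your proposal is correct and follows essentially the same route as the paper: reduce naturality to an equality of underlying functions in $\bd{Set}$, then verify agreement of the two bipoints on generating congruences via the same chain of equalities. The only (harmless) difference is that you explicitly check the open generators $\del(\syp{x})$ as well, whereas the paper dispenses with this by noting that a point of $\Apm$ is already determined by its values on the closed congruences, since the opens are their complements in the assembly.
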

\begin{proof}
Suppose that there is a morphism $f:\ca{L}\ra \ca{M}$ in $\bd{BiFrm_{fin}}$. The naturality square amounts to commutativity of the following square in $\bd{Set}$. We have also used the definition of the functor $\bpt$.
\[
\begin{tikzcd}[row sep=large, column sep = large]
\bpt(\ca{M})
\arrow{r}{\gamma_{\ca{M}}} 
\arrow[swap]{d}{-\circ f} 
& \bpt(\mf{A}_{\pm}(\ca{M}))
\arrow{d}{-\circ\mf{A}_{\pm}(f)} \\
\bpt(\ca{L})) 
\arrow{r}{\gamma_{\ca{L}}} & 
\bpt(\mf{A}_{\pm}(\ca{L}))
\end{tikzcd}
\]
Suppose, then, that $g\in \bpt(\ca{M})$. We need to check that $\gamma_{\ca{M}}(g)\circ \mf{A}_{\pm}(f)=\gamma_{\ca{L}}(g\circ f)$. We need to show that these two expressions denote the same point of $\Apm$; to do this we show that these two maps agree on where they map $\na(\syp{x})$, since every of $\Apm$ is completely determined by where it maps the closed congruences. For $x^+\in L^+$, we have the following chain of equalities.
\begin{align*}
    & \gamma_{\ca{M}}(g)^+(\mf{A}_{\pm}(f)^+(\na(\syp{x})))=\\
    & =\gamma_{\ca{M}}(g)^+(\na(\sy{f^+(x^+)}))=\\
    & =\tilde{g}_{\pm}^+(\na(\sy{f^+(x^+)}))=\\
    & =g^+(f^+(x^+))=\\
    & =(\widetilde{g\circ f})_{\pm}^+(\na(\syp{x})).\qedhere
\end{align*}
\end{proof}

\section{Finitary fitness and subfitness}

In \cite{picado14}, biframe versions of fitness and subfitness are introduced. We introduce two other biframe notions of fitness and subfitness which work well as analogues of these for the finitary duality. We say that a finitary biframe $\ca{L}$ is \tc{subfit} if whenever $a\we \syw{x}{x}\nleq b$ there are $y^+\in L^+$ and $y^-\in L^-$ such that the following holds
\[
\begin{cases} 
(\syw{x}{x})\ve \syv{y}{y}=1,\\ 
a\nleq \syv{y}{y}\ve b.
\end{cases}
\]

In \cite{cleme18}, the \tc{fitting} operation on the coframe $\mf{S}(L)$ of sublocales of a frame is introduced. In the language of congruence, this operation $\mathit{fit}:\mf{A}(L)\ra \mf{A}(L)$ sends a congruence $C$ to its ``fitted interior" $\bve \{\del(x):\del(x)\se C\}$. For a finitary biframe $\ca{L}$, we define the \tc{finitary fitting} of a finitary congruence $C$ on $L$ as the congruence 
\[
\bve \{\del(\syv{x}{x}):\del(\syv{x}{x})\se C\}.
\]
It is clear that this is an interior operator on the frame $\mf{A}_{\fin}(L)$ of finitary congruences on $L$. In \cite{picadopultr2011frames} the following characterization theorem is given.
\begin{theorem}
For a frame $L$, the following are equivalent.
\begin{enumerate}
    \item $L$ is subfit.
    \item For every $x\in L$ we have that $\del(x)=\bca \{\na(y):x\ve y=1\}$.
    \item Every closed congruence on $L$ is fitted.
    \item Any congruence on $L$ with trivial fitting is trivial. 
\end{enumerate}
\end{theorem}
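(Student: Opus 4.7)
My plan is to prove the four conditions equivalent via the separate links $(1)\Leftrightarrow(2)$, $(3)\Leftrightarrow(4)$, and $(1)\Leftrightarrow(3)$. The key auxiliary fact is that $\del(x)\se\na(y)$ iff $x\ve y=1$: the $(\Leftarrow)$ direction uses frame distributivity (from $x\ve y=1$ we have $u=(u\we x)\ve(u\we y)$, hence $u\ve y=(u\we x)\ve y$), while $(\Rightarrow)$ follows by testing the pair $(1,x)\in\del(x)$. This bridge underlies the whole proof.

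For $(1)\Rightarrow(2)$, the inclusion $\del(x)\se\bigcap\{\na(y):x\ve y=1\}$ is immediate from the auxiliary fact. For the reverse, suppose $(u,v)$ is in this intersection but $u\we x\nleq v\we x$; apply subfit to obtain $c$ with $(u\we x)\ve c=1$ and $(v\we x)\ve c<1$. Then $x\ve c=1$, so by hypothesis $u\ve c=v\ve c$; the first equation forces this common value to be $1$, yet frame distributivity yields $v\ve c=(v\we x)\ve c<1$, a contradiction. For $(2)\Rightarrow(1)$, given $a\nleq b$ the pair $(1,b)$ lies in $\del(b)\sm\del(a)$ (since $1\we a=a\ne a\we b=b\we a$), so by (2) applied at $x=a$ there exists $y$ with $y\ve a=1$ and $(1,b)\notin\na(y)$, i.e.\ $b\ve y<1$; this $y$ is the sought subfit witness.

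For $(3)\Rightarrow(4)$, suppose $\mi{fit}(C)$ is trivial yet $C$ is not. Pick $(u,v)\in C$ with $u\ne v$; then the principal congruence $\less{u\ve v}{u\we v}$ lies inside $C$. By (3), $\na(u\ve v)=\bigvee_i\del(z_i)$ for suitable $z_i$ with $z_i\ve (u\ve v)=1$; using $\del(a)\cap\del(b)=\del(a\ve b)$ together with frame distributivity in $\mf{A}(L)$, one obtains $\less{u\ve v}{u\we v}=\bigvee_i\del(z_i\ve(u\we v))$. Each $\del(z_i\ve(u\we v))$ lies in $C$, hence in $\mi{fit}(C)$; their join contains $(u,v)$, contradicting triviality of $\mi{fit}(C)$. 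For $(4)\Rightarrow(1)$, argue by contrapositive: if subfit fails for $(a,b)$, set $C=\less{a\ve b}{b}$; this is nontrivial since $b<a\ve b$, yet any $(z,1)\in C$ forces $z\geq b$ and $z\ve a=1$, whence the failure of subfit yields $z\ve b=1$, i.e.\ $z=1$, so $\mi{fit}(C)$ is trivial.

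The main obstacle will be $(1)\Rightarrow(3)$: showing $\na(a)\se\bigvee\{\del(y):y\ve a=1\}$ (the reverse inclusion being the auxiliary fact). The plan is to take $(u,v)\in\na(a)$ with $u\leq v$, use the modular relation $v=u\ve(v\we a)$ that follows from $u\ve a=v\ve a$, and reduce the task to showing that $\na(w)$ lies in $\bigvee\{\del(y):y\ve a=1\}$ for each $w\leq a$; this reduction is handled by iteratively applying subfit to produce separators $c$ with $w\ve c=1$, which automatically satisfy $a\ve c=1$ and thus place $\del(c)$ in the generating family. This is the technically delicate step, as it genuinely invokes the strength of subfit beyond the purely algebraic manipulation of open and closed congruences used elsewhere.
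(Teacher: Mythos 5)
Your links $(1)\Leftrightarrow(2)$, $(3)\Rightarrow(4)$ and $(4)\Rightarrow(1)$ are all correct, and your auxiliary fact ($\del(x)\se\na(y)$ iff $x\ve y=1$) is argued more carefully than the paper's own Lemma \ref{congruenceinclusion}. The proposal fails, however, at the one step you yourself flag as delicate: $(1)\Rightarrow(3)$. Reading item (3) literally, as you do, it asserts that every closed congruence is a join of open congruences, i.e.\ $\na(a)=\bve\{\del(y):a\ve y=1\}$ for every $a$. That is not a consequence of subfitness: it is precisely item (2) of the paper's characterization of \emph{fitness} (Theorem \ref{fitness}), where the very sentence ``every closed congruence of $L$ is fitted'' reappears verbatim as item (4). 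Since there exist subfit frames that are not fit, no amount of ``iteratively applying subfit to produce separators'' can establish the inclusion $\na(a)\se\bve\{\del(y):a\ve y=1\}$; note also that your proposed reduction to showing $\na(w)\se\bve\{\del(y):a\ve y=1\}$ for all $w\leq a$ is circular, since $\na(a)$ is itself an instance of what is to be reduced.

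The root of the difficulty is a misprint in the statement you were handed: as the finitary analogue (Theorem \ref{subfitness}, item (3)) makes clear, the intended condition is the dual one, ``every open congruence on $L$ is an intersection of closed congruences.'' With that reading $(2)\Rightarrow(3)$ is immediate, and the equivalence closes along the cycle the paper uses for the biframe version, namely $(1)\Leftrightarrow(2)$, $(2)\Rightarrow(3)\Rightarrow(4)\Rightarrow(2)$; the paper does not reprove the frame-level theorem itself, citing \cite{picadopultr2011frames}. Your $(3)\Rightarrow(4)$ argument, though internally sound under the literal reading, would then have to be redone from the corrected hypothesis: the computation $\na(u\ve v)\cap\del(u\we v)=\bve_i\del(z_i\ve(u\we v))$ must be replaced by an argument starting from $\del(x)=\bca_i\na(y_i)$, as in the paper's proof of $(3)\Rightarrow(4)$ in Theorem \ref{subfitness}. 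Your $(1)\Leftrightarrow(2)$ and $(4)\Rightarrow(1)$ can be kept as they stand.
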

We will shortly prove an analogous theorem for finitary biframes. 
\begin{lemma}\label{congruenceinclusion}
For a frame $L$ and for $x,y\in L$, we have that $\del(x)\se \na(y)$ if and only if $x\ve y=1$.
\end{lemma}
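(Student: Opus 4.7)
The plan is to prove both directions by working directly with the defining conditions $(a,b)\in \Delta(x) \iff a\wedge x = b\wedge x$ and $(a,b)\in \nabla(y) \iff a\vee y = b\vee y$.

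For the forward direction I would exploit the witness pair $(x,1)$. Since $x\wedge x = x = 1\wedge x$, this pair lies in $\Delta(x)$. If $\Delta(x)\subseteq \nabla(y)$, then $(x,1)\in \nabla(y)$, which unpacks to $x\vee y = 1 \vee y = 1$. That handles one direction in a single line.

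For the converse, I would assume $x\vee y = 1$ and take an arbitrary $(a,b)\in\Delta(x)$, i.e.\ $a\wedge x = b\wedge x$. Using distributivity together with $x\vee y = 1$, I rewrite
\[
a \;=\; a\wedge (x\vee y) \;=\; (a\wedge x)\vee (a\wedge y),
\]
so that $a\vee y = (a\wedge x)\vee (a\wedge y)\vee y = (a\wedge x)\vee y$ since $a\wedge y \leq y$. The same computation yields $b\vee y = (b\wedge x)\vee y$. Substituting $a\wedge x = b\wedge x$ gives $a\vee y = b\vee y$, hence $(a,b)\in \nabla(y)$.

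There is no real obstacle here; the only subtlety is remembering which pair generates $\Delta(x)$ (namely $(x,1)$) so that the forward direction becomes a one-line specialisation, and then using the hypothesis $x\vee y = 1$ through distributivity to transfer a meet-equality into a join-equality in the converse.
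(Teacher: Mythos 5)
Your proof is correct, but it takes a genuinely different route from the paper's. The paper disposes of the lemma in one line, observing that both conditions are equivalent to $\del(x)\cap \del(y)$ being the trivial congruence (the diagonal, the bottom of $\mf{A}(L)$): on the right-hand side this uses $\del(x)\cap\del(y)=\del(x\ve y)$ and the fact that $\del(z)$ is trivial exactly when $z=1$; on the left-hand side it uses that $\del(y)$ is the complement of $\na(y)$ in the frame $\mf{A}(L)$, so that $\del(x)\se \na(y)$ iff $\del(x)\cap\del(y)$ is the bottom element. (As printed, the paper writes ``$\del(x)\cap\del(y)=L\times L$'', where the diagonal congruence, not the total one, must be meant.) You instead unwind the pointwise definitions $(a,b)\in\del(x)\iff a\we x=b\we x$ and $(a,b)\in\na(y)\iff a\ve y=b\ve y$: the generating pair $(x,1)$ of $\del(x)$ settles the forward direction in one step, and distributivity together with $x\ve y=1$ transfers the meet-equality into the join-equality for the converse. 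The paper's argument is slicker and makes the structural point that the lemma is nothing but complementation in the assembly; yours is more elementary, needs no facts about $\mf{A}(L)$ beyond the raw definitions of $\na$ and $\del$, and is immune to the kind of notational slip the one-liner exhibits. Both are valid proofs.
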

\begin{proof}
It suffices to notice that both conditions are equivalent to $\del(x)\cap \del(y)=L\times L$.
\end{proof}
We notice that the analogue of characterization (4) does not hold for the biframe versions of fitness and subfitness in \cite{picado14}, but we are able to recover it if we move to the finitary setting.

\begin{theorem}\label{subfitness}
For a finitary biframe $\ca{L}$, the following are equivalent.
\begin{enumerate}
    \item The biframe $\ca{L}$ is subfit.
    \item For every $x^+\in L^+$ and every $x^-\in L^-$ we have that $\del(\syw{x}{x})=\bca\{\na(\syv{y}{y}):(\syw{x}{x})\ve \syv{y}{y}=1\}$.
    \item Every open congruence on $L$ is an intersection of finitary closed congruences.
    \item Any finitary congruence on $L$ with trivial finitary fitting is trivial.
\end{enumerate}
\end{theorem}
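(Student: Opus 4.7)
The plan is to establish the four conditions via the cycle $(1) \Rightarrow (2) \Rightarrow (3) \Rightarrow (4) \Rightarrow (1)$.

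For $(1) \Leftrightarrow (2)$, the containment $\supseteq$ in (2) follows directly from Lemma \ref{congruenceinclusion}: if $(\syw{x}{x})\ve \syv{y}{y}=1$, then $\del(\syw{x}{x}) \se \na(\syv{y}{y})$. For the reverse, I argue contrapositively: assume $(u,v) \notin \del(\syw{x}{x})$, so $u\we \syw{x}{x} \neq v\we\syw{x}{x}$, and WLOG $u\we\syw{x}{x}\nleq v$; then subfitness applied with $a=u$, $b=v$ produces $\syv{y}{y}$ with $(\syw{x}{x})\ve\syv{y}{y}=1$ and $u\nleq v\ve\syv{y}{y}$, witnessing $(u,v)\notin\na(\syv{y}{y})$. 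The direction $(2) \Rightarrow (1)$ is dual: given $a\we\syw{x}{x}\nleq b$, the pair $(a,b)$ fails to lie in $\del(\syw{x}{x})$, and (2) directly supplies the required witness. For $(2) \Rightarrow (3)$, any $x \in L$ decomposes as $x = \bve_i \syw{x_i}{x_i}$ since $L$ is generated by $L^+ \cup L^-$; using that $\del$ converts joins in $L$ into intersections in $\mf{A}(L)$, we have $\del(x) = \bca_i \del(\syw{x_i}{x_i})$, and (2) expresses each factor as an intersection of closed congruences $\na(\syv{y}{y})$, all of which lie in $\mf{A}_{\fin}(L)$.

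For $(3) \Rightarrow (4)$, let $C$ be finitary with trivial finitary fitting and suppose for contradiction that $C\neq \Delta$. Since finitary congruences are generated by pairs of the form $(\syw{x}{x},\syv{y}{y})$ (the congruence analogue of Lemma \ref{finpre}), a nontrivial $C$ contains some such generator with $\syw{x}{x}\neq \syv{y}{y}$, and hence admits a nontrivial elementary sub-congruence $\Theta(\syw{x}{x},\syv{y}{y}) = \na(\syw{x}{x}\ve\syv{y}{y})\cap\del(\syw{x}{x}\we\syv{y}{y}) \se C$. Applying (3) to the open factor $\del(\syw{x}{x}\we\syv{y}{y})$ rewrites this sub-congruence as an intersection of closed congruences; the plan is then to exploit the join structure of $C$ as a finitary congruence to extract a finitary fitted sub-congruence $\del(\syv{z}{z}) \se C$ with $\syv{z}{z} < 1$, yielding the desired contradiction with $\mathit{fit}_{\fin}(C) = \Delta$. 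The main technical difficulty here is bridging the gap between the general closed congruences $\na(f)$ for $f\in L$ provided by (3) and the specific form $\na(\syv{y}{y})$ needed to identify finitary fitted sub-congruences, which requires careful use of the generating structure of finitary biframes.

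Finally, $(4) \Rightarrow (1)$ is handled by contraposition. Given witnesses $a,b,x^+,x^-$ to the failure of subfitness (so $a\we\syw{x}{x}\nleq b$ yet $a\leq b\ve\syv{y}{y}$ whenever $(\syw{x}{x})\ve\syv{y}{y}=1$), I would construct a nontrivial finitary congruence with trivial finitary fitting. A natural candidate is $C = \na(a\we\syw{x}{x})\cap\del(a\we\syw{x}{x}\we b)$: it is finitary (as $a\we\syw{x}{x}$ and $a\we\syw{x}{x}\we b$ are finitary elements, using distributivity of $\we$ over the biframe joins that define $a$ and $b$) and nontrivial because $a\we\syw{x}{x}\nleq b$ forces $a\we\syw{x}{x} > a\we\syw{x}{x}\we b$. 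The main obstacle is verifying $\mathit{fit}_{\fin}(C) = \Delta$: any $\syv{y}{y}$ with $\del(\syv{y}{y})\se C$ must satisfy both $(a\we\syw{x}{x})\ve\syv{y}{y}=1$ (which forces $\syw{x}{x}\ve\syv{y}{y}=1$) and $\syv{y}{y}\geq a\we\syw{x}{x}\we b$; the failure hypothesis then gives $a\leq b\ve\syv{y}{y}$, and the task is to combine these constraints with $a\we\syw{x}{x}\nleq b$ to force $\syv{y}{y}=1$, possibly after refining the choice of $C$ so as to tighten the encoding of the failure witnesses.
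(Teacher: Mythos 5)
Your handling of $(1)\Leftrightarrow(2)$ and $(2)\Rightarrow(3)$ is correct and essentially the paper's argument. The last two implications, however, are only plans with self-acknowledged open ``obstacles,'' and in both cases the unresolved part is the actual content of the implication. For $(3)\Rightarrow(4)$, the difficulty you flag---bridging general finitary closed congruences $\na(f)$ and the specific form $\na(\syv{y}{y})$---is not where the work is: a finitary $f$ is a finite meet of elements $\syv{y}{y}$ and $\na$ preserves finite meets, so an intersection of finitary closed congruences is automatically an intersection of congruences of the form $\na(\syv{y}{y})$. What is missing is the extraction of a nontrivial $\del(\syv{z}{z})\se C$, which you leave undone. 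The paper argues directly rather than by contradiction: if $\na(\syw{x}{x})\cap\del(\syv{y}{y})\se C$, then since $\na(\syw{x}{x})$ and $\del(\syw{x}{x})$ are complements in $\mf{A}(L)$ one gets $\del(\syv{y}{y})\se C\ve\del(\syw{x}{x})$; writing $\del(\syw{x}{x})=\bca_i\na(\syv{y_i}{y_i})$ by (3) and distributing gives $\del(\syv{y}{y}\ve\syv{y_i}{y_i})\se C$ for every $i$, so the trivial-fitting hypothesis forces $\syv{y}{y}\ve\syv{y_i}{y_i}=1$, whence Lemma \ref{congruenceinclusion} yields $\del(\syv{y}{y})\se\bca_i\na(\syv{y_i}{y_i})=\del(\syw{x}{x})$, i.e.\ $\syw{x}{x}\leq\syv{y}{y}$. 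Your sketch can be completed along these lines, but as written it is a gap. (Also note that the generators of a finitary congruence have the form $\na(\syw{x}{x})\cap\del(\syv{y}{y})$, forcing an inequality, not the symmetrized principal congruence $\Theta(\syw{x}{x},\syv{y}{y})$ you write down, which need not be contained in $C$.)

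The implication $(4)\Rightarrow(1)$ has a more substantive defect than the unverified fitting computation: your candidate $C=\na(a\we\syw{x}{x})\cap\del(a\we\syw{x}{x}\we b)$ need not be finitary. In the definition of subfitness $a$ and $b$ are arbitrary elements of $L$, hence arbitrary (possibly infinite) joins of finitary elements; distributing $\we$ over an infinite join does not produce a finitary element, and $\del$ of a non-finitary element is an arbitrary intersection of finitary open congruences, which has no reason to lie in the subframe $\mf{A}_{\fin}(L)$. Since hypothesis (4) speaks only about finitary congruences, it cannot be applied to this $C$, and ``refining the choice of $C$'' is precisely the step you would need to supply. The paper sidesteps this by closing the cycle with $(4)\Rightarrow(2)$ instead: fix $x^+$ and $x^-$, let $\{\syv{y_i}{y_i}:i\in I\}$ be all elements with $(\syw{x}{x})\ve\syv{y_i}{y_i}=1$, and show via Lemma \ref{congruenceinclusion} that $\bca_i\na(\syv{y_i}{y_i})\cap\na(\syw{x}{x})$ has trivial finitary fitting, so that (4) forces it to be the diagonal and hence $\bca_i\na(\syv{y_i}{y_i})\se\del(\syw{x}{x})$, which is exactly the nontrivial inclusion in (2).
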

\begin{proof}
Let $\ca{L}$ be a finitary biframe.

(1)$\implies$(2). Suppose that $\ca{L}$ is subfit. Let $x^+\in L^+$ and let $x^-\in L^-$. If $(\syw{x}{x})\ve \syv{y}{y}=1$ then we have $\del(\syw{x}{x})\se \na(\syv{y}{y})$ by Lemma \ref{congruenceinclusion}, and so we have one of the desired set inclusions. For the other, suppose that we have $(a,b)\notin \del(\syw{x}{x})$. This means that we have $a\we \syw{x}{x}\neq b\we \syw{x}{x}$, and in particular we have that $a\we \syw{x}{x}\nleq b$. Then, by subfitness, there are $y^+\in L^+$ and $y^-\in L^-$ such that both $(\syw{x}{x})\ve \syv{y}{y}=1$ and such that $a\nleq \syv{y}{y}\ve b$. This means that $(a,b)\notin \na(\syv{y}{y})$, and so indeed we have the other inclusion.\\[2mm]
(2)$\implies$(1). Suppose that (1) holds, and that we have $a\we \syw{x}{x}\nleq b$. This means in particular that $a\we \syw{x}{x}\nleq b\we \syw{x}{x}$, and by definition of open congruences this means that $\na(a)\cap \del(b)\nsubseteq \del(\syw{x}{x})$. By our assumption, this implies that there are $y^+\in L^+$ and $y^-\in L^-$ such that $(\syw{x}{x})\ve \syv{y}{y}=1$ and such that $\na(a)\cap \del(b)\nsubseteq \na(\syv{y}{y})$. This latter condition means that $\na(a)\nsubseteq \na(\syv{y}{y})\ve \na(b)$, and this is equivalent to $a\neq \syv{y}{y}\ve b$. Thus, the finitary biframe is subfit. \\[2mm]
(2)$\implies$ (3). This follows directly from the fact that every open finitary congruence is a finite intersection of congruences of the form $\del(\syw{x}{x})$.  \\[2mm]
(3)$\implies$ (4). Suppose that every open congruence on $L$ is an intersection of closed congruences. Now, suppose that we have a finitary congruence $C$ such that its fitting is $L$, that is, suppose that we have $C$ such that $\del(\syv{x}{x})\se C$ implies that $\syv{x}{x}=1$ for every $x^+\in L^+$ and every $x^-\in L^-$. As $C$ is finitary, to show that it is the identity on $L$ it suffices to show that $\na(\syw{x}{x})\cap \del(\syv{y}{y})\se C$ implies that $\syw{x}{x}\leq \syv{y}{y}$ in $L$. Suppose, then, that the antecedent of this proposition holds. We have that $\del(\syv{y}{y})\se C\ve \del(\syw{x}{x})$. By assumption, $\del(\syw{x}{x})=\bca \{\na(\syv{y_i}{y_i}):i\in I\}$ for some collection $y^+_i\in L^+$ and $y^-_i\in L^-$. We substitute into our assumption and we get $\del(\syv{y}{y})\se C\ve \bca_i \na (\syv{y_i}{y_i})\se \bca (C\ve \na(\syv{y_i}{y_i}))$. This means that for every $i\in I$ we have that $\del(\syv{y}{y})\cap \del(\syv{y_i}{y_i})=\del(\syv{y}{y}\ve \syv{y_i}{y_i})\se C$. By assumption on the congruence $C$ this means that $\syv{y}{y}\ve \syv{y_i}{y_i}=1$. We obtain, by Lemma \ref{congruenceinclusion}, that $\del(\syv{y}{y})\se \bca_i \na(\syv{y_i}{y_i})=\del(\syw{x}{x})$, from which we deduce $\syw{x}{x}\leq \syv{y}{y}$. Then, indeed, $C$ must be the identity $L\times L$. \\[2mm]
(4)$\implies$(2). Suppose that every finitary congruence with trivial fitting is trivial Let $x^+\in L^+$ and let $x^-\in L^-$. Let $\{\syv{y_i}{y_i}:i\in I\}$ be the collection $\{\syv{y}{y}:(\syw{x}{x})\ve \syv{y}{y}=1\}$. We show that $\bca_i\na(\syv{y_i}{y_i})\se \del(\syw{x}{x})$ by showing that we have $\bca_i\na(\syv{y_i}{y_i})\cap \na(\syw{x}{x})\se L\times L$. In virtue of our assumption, it suffices to show that this congruence has a trivial fitting. Suppose, then, that there are $a^+\in L^+$ and $a^-\in L^-$ such that 
\begin{align*}
    \del(\syv{a}{a})\se \bca_i \na(\syv{y_i}{y_i})\cap \na(\syw{x}{x})& & (*)
\end{align*}
In particular, $\del(\syv{a}{a})\se \na(\syw{x}{x})$, which by Lemma \ref{congruenceinclusion} means that $(\syw{x}{x})\ve \syv{a}{a}=1$, and so $\syv{a}{a}$ must be in the collection $\{\syv{y_i}{y_i}:i\in I\}$. Say $\syv{a}{a}=\syv{y_j}{y_j}$. But by $(*)$ we must also have that $\del(\syv{a}{a})=\del(\syv{y_j}{y_j})\se \na(\syv{y_j}{y_j})$. We must then have that $\del(\syv{y_j}{y_j})\cap \del(\syv{y_j}{y_j})=L\times L$, and so we must have that $\syv{a}{a}=1$. \qedhere
\end{proof}

Let us move to the analogue of fitness for finitary biframes. We say that a finitary biframe $\ca{L}$ is \tc{fit} if whenever $\syw{x}{x}\nleq a$ there are $y^+\in L^+$ and $y^-\in L^-$ such that the following holds
\[
\begin{cases} 
(\syw{x}{x})\ve \syv{y}{y}=1,\\ 
(\syv{y}{y})\ra a\neq a.
\end{cases}
\]
We will give right away a characterization of biframe fitness in terms of sublocales, which perhaps is more transparent than the frame theoretical definition, especially in light of the analogous characterization of subfitness.

In \cite{picadopultr2011frames}, the following is shown.
\begin{theorem}
For a frame $L$, the following are equivalent.
\begin{enumerate}
    \item The frame $L$ is fit.
    \item For every $x\in L$ we have that $\na(x)=\bve \{\del(y):x\ve y=1\}$.
    \item If two congruence on $L$ have the same fitting, they are the same congruence.
    \item Every closed congruence of $L$ is fitted.
    \item Every congruence on $L$ is fitted.
    \item All quotients of $L$ are fit.
\end{enumerate}
\end{theorem}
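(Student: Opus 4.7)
The plan is to organise the proof around the sublocale-theoretic condition $(5)$ as a central hub, establishing each of $(1), (2), (3), (4), (6)$ independently equivalent to $(5)$. This way the combinatorics between the six conditions is reduced to a small number of short derivations, with only one genuinely non-trivial equivalence to bear the weight of the theorem.

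The implications among $(2), (3), (4), (5)$ are mostly direct. For $(5) \Rightarrow (2)$, apply $(5)$ to the closed congruence $\na(x)$ to write it in the form $\na(x) = \bve_i \del(z_i)$; by Lemma \ref{congruenceinclusion}, each $\del(z_i)\se \na(x)$ forces $x\ve z_i = 1$, so $\na(x)\se \bve\{\del(y): x\ve y = 1\}$, with the reverse inclusion given by the same lemma. The converse $(2) \Rightarrow (5)$ uses that fitted congruences form a subframe of $\mf{A}(L)$: the canonical presentation of an arbitrary congruence as $\bve_i(\na(x_i)\cap \del(y_i))$ combined with the identity $\del(y)\cap \del(z) = \del(y\ve z)$ and distributivity in $\mf{A}(L)$ shows that finite meets and arbitrary joins of joins of opens are again such, so if each $\na(x_i)$ is fitted under $(2)$ the whole congruence is too. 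The equivalence $(4)\Leftrightarrow (5)$ follows by the same argument, since closed congruences are the only non-trivial ingredient in the canonical presentation. Finally, $(3)\Leftrightarrow (5)$ is immediate from the fact that the fitting assignment is an idempotent interior operator on $\mf{A}(L)$: injectivity of $\mi{fit}$ forces $C = \mi{fit}(C)$ for every $C$, i.e.\ every congruence is fitted, and conversely if every congruence is fitted then $\mi{fit}$ is the identity and hence trivially injective.

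The equivalence $(1)\Leftrightarrow (6)$ is the easiest. The implication $(6)\Rightarrow (1)$ is immediate since $L$ is the quotient of itself by the identity congruence. For $(1) \Rightarrow (6)$, any sublocale of a quotient $L/C$ is also a sublocale of $L$; by fitness of $L$ it is an intersection of open sublocales of $L$, and under the quotient map this expression restricts to an intersection of open sublocales of $L/C$, whence $L/C$ is fit.

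The main obstacle is the hub equivalence $(1) \Leftrightarrow (5)$, since the frame-theoretic definition of fitness and the statement ``every congruence is fitted'' are superficially very different. The cleanest route is to import this standard sublocale reformulation of fitness directly from \cite{picadopultr2011frames}; alternatively one can prove it by hand by analysing, for each pair $a \nleq b$, the congruence $\na(a) \cap \del(b)$, noticing that $(b, a\ve b)$ witnesses its non-triviality, and then exploiting the fitness condition to expose witnesses $c$ with $a\ve c = 1$ realizing this meet as a join of open congruences satisfying the relation in $(2)$.
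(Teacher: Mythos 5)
A preliminary caveat: the paper does not actually prove this theorem --- it is quoted from \cite{picadopultr2011frames} as known, and the only proof available for comparison is the paper's proof of its finitary biframe analogue, Theorem \ref{fitness}. Relative to that proof, which runs through the chain (1)$\Leftrightarrow$(2), (2)$\Leftrightarrow$(3), (2)$\Rightarrow$(4)$\Leftrightarrow$(5)$\Rightarrow$(6)$\Rightarrow$(1), your hub-around-(5) architecture is a genuinely different and perfectly sound decomposition. The implications (5)$\Rightarrow$(2) and (2)$\Rightarrow$(5) via the canonical presentation $\bve_i\na(x_i)\cap\del(y_i)$ together with $\del(y)\cap\del(z)=\del(y\ve z)$, the equivalence (4)$\Leftrightarrow$(5) by the same device, and (3)$\Leftrightarrow$(5) via idempotency of the interior operator $\mathit{fit}$ are all correct; the last of these is in fact slicker than the direct double-inclusion argument the paper gives for the corresponding step of Theorem \ref{fitness}. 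Your (1)$\Rightarrow$(6) argument is also fine, though note that it silently invokes (5) for $L$ and (5)$\Rightarrow$(1) for $L/C$, so it only becomes available once the hub equivalence is established.

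That hub equivalence, (1)$\Leftrightarrow$(5) (equivalently (1)$\Leftrightarrow$(2)), is the only point where the definition of fitness actually enters, and it is exactly the point your proposal does not prove: you either outsource it to the citation or offer a sketch --- built around the congruence $\na(a)\cap\del(b)$ and the pair $(b,a\ve b)$ --- which does not visibly produce the needed inclusion $\na(x)\se\bve\{\del(y):x\ve y=1\}$; that pair is the natural witness in the \emph{subfitness} argument, not here. The gap is real but easy to close, and the paper's one-line treatment of the corresponding step of Theorem \ref{fitness} shows how: pass to sublocales, where (2) reads $\up x=\bca\{\op(y):x\ve y=1\}$. The inclusion $\se$ is Lemma \ref{congruenceinclusion}. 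For $\supseteq$, if $a$ lies in the intersection, then $y\ra a=a$ for every $y$ with $x\ve y=1$; were $x\nleq a$, fitness would supply some such $y$ with $y\ra a\neq a$, a contradiction, so $a\in\up x$. Conversely, a failure of fitness at a pair $x\nleq a$ exhibits $a$ as an element of $\bca\{\op(y):x\ve y=1\}$ outside $\up x$, so (2) fails. With this step supplied, your proof is complete.
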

Once again, we prove a biframe analogue of this theorem. The biframe analogue of characterization (3) does not hold for fitness as defined in \cite{picado14}, but we do recover it if we stay in the finitary setting.
\begin{remark}
In the next proof, we will temporarily work with sublocales rather than with congruences. We recall that the lattice of sublocales of a frame $L$ is anti-isomorphic to the frame of congruences on it. The anti-isomorphism maps a congruence $C$ to the subset $\{\bve [x]_C:x\in L\}\se L$ of all top elements of the equivalence classes of $C$. In particular, we have that an open congruence $\del(a)$ is mapped by this anti-isomorphism to the set $\op (a)=\{a\ra x:x\in L\}$, and we also have that $b\in \op (a)$ if and only if $a\ra b=b$. A closed congruence $\na(a)$ is mapped to the sublocale $\up a\se L$, and so we have that $b\in \cl(a)$ if and only if $a\leq b$. In the next proof, we will have to show an inclusion of the form $\na(x)\se \bve_i \del(y_i)$. In order to do this, we will work within the lattice of sublocales, and we will show that $\bca_i \op (y_i)\se \cl(x)$.
\end{remark}
\begin{theorem}\label{fitness}
For a finitary biframe $\ca{L}$, the following are equivalent.
\begin{enumerate}
    \item The biframe $\ca{L}$ is fit.
    \item For every $x^+\in L^+$ and every $x^-\in L^-$ we have that $\na(\syw{x}{x})=\bve \{\del(\syv{y}{y}):(\syw{x}{x})\ve \syv{y}{y}=1\}$.    
    \item If two finitary congruences on $L$ have the same finitary fitting, the two congruences are equal.
    \item Every closed congruence of $L$ is finitary fitted.
    \item Every finitary congruence on $L$ is finitary fitted.
    \item All biquotients of $\ca{L}$ are fit.
\end{enumerate}
\end{theorem}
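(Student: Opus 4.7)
The plan is to prove the equivalence via the cycle $(2) \Rightarrow (4) \Rightarrow (5) \Rightarrow (3) \Rightarrow (2)$, complemented by $(1) \Leftrightarrow (2)$ and by $(5) \Rightarrow (6) \Rightarrow (1)$, the last implication being trivial since $\ca{L}$ is a biquotient of itself. To establish $(1) \Leftrightarrow (2)$, I would pass to the sublocale lattice as in the preceding remark: condition $(2)$ becomes $\cl(\syw{x}{x}) = \bca \{\op(\syv{y}{y}) : \syw{x}{x}\ve \syv{y}{y}=1\}$, where one inclusion is a direct Heyting-algebra computation exploiting $\syw{x}{x}\ve \syv{y}{y}=1$, and the reverse inclusion, read contrapositively, is precisely the biframe fitness condition in $(1)$.

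For the main cycle: $(2) \Rightarrow (4)$ uses that $\na : L \to \mf{A}(L)$ is a frame morphism, so any closed congruence $\na(c)$, upon decomposing $c \in L$ canonically as $\bve_i \syw{x_i}{x_i}$, becomes $\bve_i \na(\syw{x_i}{x_i})$, each summand being finitary fitted by $(2)$. $(4) \Rightarrow (5)$ is routine bookkeeping starting from the canonical form of a finitary congruence and using that intersections of finitary open congruences are again finitary open (after collecting the positive and negative parts of $\syv{z}{z}\ve \syv{y}{y}$). $(5) \Rightarrow (3)$ is immediate: if every finitary congruence equals its own finitary fitting, then the finitary fitting operator is the identity on $\mf{A}_{\fin}(L)$, so congruences with equal fittings coincide. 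Finally, $(3) \Rightarrow (2)$ follows by computing, via Lemma \ref{congruenceinclusion}, that the finitary fitting of $\na(\syw{x}{x})$ is exactly $C := \bve \{\del(\syv{y}{y}) : \syw{x}{x}\ve \syv{y}{y}=1\}$; since $C$ is already finitary fitted, its own fitting is $C$ itself, and $(3)$ thus forces $\na(\syw{x}{x}) = C$.

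The transfer $(5) \Rightarrow (6)$ goes by observing that the finitary congruences on the main component of a biquotient $\ca{L}/R$ correspond, via Proposition \ref{finitaryquotient} and Lemma \ref{finitarylemma91}, to the finitary congruences on $L$ containing the one induced by $R$; the canonical quotient map sends finitary opens to finitary opens of the same shape, so a finitary fitted decomposition of a congruence on $L$ descends to a finitary fitted decomposition in the quotient, giving $(5)$, and hence $(1)$, for $\ca{L}/R$.

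The main obstacle I anticipate is ensuring that the finitary restrictions behave correctly throughout: in $(3) \Rightarrow (2)$ the finitary fitting of $\na(\syw{x}{x})$ must match the right-hand side of $(2)$ via Lemma \ref{congruenceinclusion}, and in $(5) \Rightarrow (6)$ the finitary fitted structure must transfer cleanly through biquotients. These are the points where the present theorem genuinely separates from its classical frame-theoretic analogue in \cite{picadopultr2011frames} and from the biframe version in \cite{picado14}, for which characterization $(3)$ is known to fail.
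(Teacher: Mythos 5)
Your proposal is correct and follows essentially the same route as the paper's proof: the sublocale translation for $(1)\Leftrightarrow(2)$, Lemma \ref{congruenceinclusion} together with the canonical decomposition of elements and of finitary congruences for $(2)\Rightarrow(4)\Rightarrow(5)$, and the witness/quotient correspondence of Proposition \ref{finitaryquotient} and Lemma \ref{finitarylemma91} for $(5)\Rightarrow(6)$. The only difference is a harmless reshuffling of the implication graph — you obtain $(3)$ from $(5)$ by noting that finitary fitted congruences are fixed points of the finitary fitting operator, and $(3)\Rightarrow(2)$ by observing that both sides of $(2)$ have the same finitary fitting, whereas the paper proves $(2)\Leftrightarrow(3)$ directly by a distributivity computation; your version is, if anything, slightly cleaner at that step.
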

\begin{proof}
(1) is equivalent to (2). Both conditions are equivalent to having that $a\notin \cl(\syw{x}{x})$ implies that $a\notin \bca \{\op (\syv{y}{y}):(\syw{x}{x})\ve\syv{y}{y}=1\}$.\\[2mm]
(2)$\implies$ (3). Suppose that $C$ and $D$ are two finitary congruences on $L$ such that for every $a^+\in L^+$ and every $a^-\in L^-$ we have that $\del(\syv{a}{a})\se C$ if and only if $\del(\syv{a}{a})\se D$. Suppose that we have $\na(\syw{a}{a})\cap \del(\syv{b}{b})\se C$. We show that this implies $\na(\syw{a}{a})\cap \del(\syv{b}{b})\subseteq D$. By our assumption, we have that $\na(\syw{a})=\bve \del(\syv{a_i}{a_i})$, where $\{\syv{a_i}{a_i}:i\in I\}=\{\syv{x}{x}:(\syw{a}{a})\ve\syv{x}{x}=1\}$. We then have that $\bve_i \del(\syv{a_i}{a_i})\cap \del(\syv{b}{b})\se C$, which means that for every $i\in I$ we have $\del(\syv{a_i}{a_i}\ve \syv{b}{b})\se C$, and by our assumption this implies that also $\del(\syv{a_i}{a_i}\ve \syv{b}{b})\se D$. But this implies, in turn, that $\bve _i \del(\syv{a_i}{a_i})\cap \del(\syv{b}{b})=\na(\syw{a}{a})\cap \del(\syv{b}{b})\se D$.\\[2mm]
(3)$\implies$ (2). Suppose that finitary congruences with the same finitary fitting are the same congruence. By Lemma \ref{congruenceinclusion}, it suffices to show $\na(\syw{x}{x})\se \bve_i \del(\syv{y_i}{y_i})$, in which $\{\syv{y_i}{y_i}:i\in I\}=\{\syv{y}{y}:(\syw{x}{x})\ve \syv{y}{y}=1\}$. Because of our assumption, to show this fact it suffices to show that $\del(\syv{a}{a})\se \na(\syw{x}{x})$ implies that $\del(\syv{a}{a})\se \bve _i \del(\syv{y_i}{y_i})$. Suppose, then, that the antecedent holds. By Lemma \ref{congruenceinclusion}, we then have that $(\syv{x}{x})\ve\syv{a}{a}=1$, and definition of this collection, we have that $\syw{a}{a}$ is in $\{\syv{y_i}{y_i}:i\in I\}$. Then, indeed the consequent holds too. \\[2mm]
(2)$\implies$(4). This clearly, holds, since all closed congruences are joins of congruences of the form $\del(\syw{x}{x})$.\\[2mm]
(4) and (5) are equivalent. Only one direction is nontrivial. This holds because every finitary congruence is of the form $\bve_i \na(\syw{x_i}{x_i})\cap \del(\syv{y_i}{y_i})$. \\[2mm]
(5)$\implies$(6). Suppose that all finitary congruences on $\ca{L}$ are finitary fitted. It suffices to show that all biquotients of $\ca{L}$ are such that the finitary congruences on them are all finitary fitted. Suppose, then, that $C$ is a finitary congruence on $L$. By assumption, it is $\bve_i\del(\syv{x_i}{x_i})$ for collections $x^+_i\in L^+$ and $x^-_i\in L^-$. Suppose, now, that $D$ is a finitary congruence on $L/C$. Now, for every equivalence class of the form $[\syp{x}]_C$ we select the witness $w(\syp{x}):=\bve \{\syp{y}:\syp{y}\in [\syp{x}]_C\}$, and we select similarly canonical witnesses from the negative equivalence classes. By Lemma \ref{witness}, we have that the quotient $(L/C)/D$ is the same as $L$ quotiented by $C\cup \{(w(\syp{a})\we w(\sym{a}),w(\syp{b})\ve w(\sym{b}):(\syw{a}{a},\syv{b}{b})\in D\}$. This is a finitary congruence on $L$, and so by assumption we have that it is of the form $\bve_j \del(\syv{y_j}{y_j})$. Then, $(L/C)/D$ is $L$ quotiented by
\[
\bve_i \del(\syv{x_i}{x_i})\cap \bve_j\del(\syv{y_j}{y_j})
\]
Therefore, by Lemma \ref{finitarylemma91}, the quotient $(L/C)/D$ is the same as quotienting $L/C$ by the congruence $\bve \{\del([\syp{y_j}]_C\ve [\sym{y_j}]_C):j\in J\}$. Indeed, then, $D$ is a finitary fitted congruence on $L/C$.
\\[2mm]
(6)$\implies$(1). This is clear. \qedhere
\end{proof}

Fitness and subfitness were both introduced as possible pointfree versions of the $T_1$ axiom. It is then  natural to also explore the characterizations of those finitary biframes such that their bispectrum is pairwise $T_1$. We follow \cite{salbany74} in defining a bispace to be \tc{pairwise} $T_1$, or simply $\tc{bi}-T_1$, if whenever we have $x,y\in X$ with $x\neq y$ there is some $U\in \Om^+(X)\cup \Om^-(X)$ such that it contains $x$ and omits $y$. 
\begin{lemma}\label{pairwiset1}
A bispace $C$ is pairwise $T_1$ if and only if for every $x\in X$ we have that $\{x\}=U^+{}^c\cap U^-{}^c$ for some positive open $U^+$ and some negative open $U^-$.
\end{lemma}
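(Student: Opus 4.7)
The plan is to prove the two implications directly, exploiting the observation that the condition $\{x\}=(U^+)^c\cap (U^-)^c$ is equivalent to $U^+\cup U^-=X\sm \{x\}$, so that the proof reduces to rearranging separation data for each singleton into a single positive and a single negative open. I would also note first that the definition of pairwise $T_1$ is self-dual with respect to exchanging $x$ and $y$: given distinct $x,y$, the existence of an open (positive or negative) containing $x$ and omitting $y$ is equivalent, by swapping the roles of the two points, to the existence of an open containing $y$ and omitting $x$. I will use whichever formulation is convenient.

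For the forward direction, I would fix $x\in X$ and, for each $y\neq x$, use the (swapped) pairwise $T_1$ condition to pick some $W_y\in \Om^+(X)\cup \Om^-(X)$ with $y\in W_y$ and $x\notin W_y$. Setting
\[
U^+=\bcu\{W_y:y\neq x,\,W_y\in \Om^+(X)\}, \qquad U^-=\bcu\{W_y:y\neq x,\,W_y\in \Om^-(X)\},
\]
one has $U^+\cup U^-\se X\sm \{x\}$ because $x$ lies in no $W_y$, and $X\sm \{x\}\se U^+\cup U^-$ because every $y\neq x$ lies in $W_y$. Hence $\{x\}=(U^+\cup U^-)^c=(U^+)^c\cap (U^-)^c$.

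For the converse, suppose that every $x\in X$ admits such a representation $\{x\}=(U^+_x)^c\cap (U^-_x)^c$ with $U^+_x\in \Om^+(X)$ and $U^-_x\in \Om^-(X)$. Given distinct points $x,y\in X$, we have $y\notin \{x\}=(U^+_x)^c\cap (U^-_x)^c$, so $y\in U^+_x$ or $y\in U^-_x$. In either case we have exhibited an element of $\Om^+(X)\cup \Om^-(X)$ containing $y$ and (since $x\in (U^+_x)^c\cap (U^-_x)^c$) omitting $x$. Appealing once more to the self-duality noted above, this is precisely the pairwise $T_1$ condition.

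There is no genuine obstacle here; the only substantive step is recognising the set-theoretic identity $(U^+)^c\cap (U^-)^c=(U^+\cup U^-)^c$ and organising the separation data into a single positive and a single negative open by taking unions.
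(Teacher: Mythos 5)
Your proof is correct and follows essentially the same route as the paper's: separate the witnesses $W_y$ into positive and negative families, take the two unions, and complement; the converse is the same direct unpacking. The only difference is cosmetic — you make explicit the symmetry of the pairwise $T_1$ condition under swapping the two points, which the paper uses silently.
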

\begin{proof}
Suppose that $X$ is pairwise $T_1$, and let $x\in X$. For every $y\in Y$ with $x\neq y$ there is either some $U^+_y$ or some $U^-_y$ such that it contains $y$ and omits $x$. We may then consider the set $(\bcu \{U^+{}_y\cup U^-{}_y:y\neq x\})^c$. This is a set of the form $U^+{}^c\cap U^-{}^c$ which only contains $x$. For the converse, suppose that every singleton is of the form $U^+{}^c\cap U^-{}^c$ and that $x\neq y$. Then, we have that $y\in U^+\cup U^-$ and so either $y\in U^+$ or $y\in U^-$. We also have that $x\notin U^+$ and $x\notin U^-$, and so we are done.
\end{proof}

Observe how the following theorem may be read as the spatial version of Theorem \ref{fitness}.

\begin{theorem}\label{pairwiseT1}
The following are equivalent for a d-frame $\ca{L}$.
\begin{enumerate} 
\item The bispace $\bpt(\ca{L})$ is pairwise $T_1$. 
    \item For each $f\in \bpt(\ca{L})$ we have $\{f\}=\varphi_{\ca{L}}^+(a^+){}^c\cap \varphi_{\ca{L}}^-(a^-){}^c$.
     \item The negative topology of $Sk_{cf}(\bpt(\ca{L}))$ is discrete.
    \item All bisubspaces of $\bpt(\ca{L})$ are bispectra of finitary fitted biquotients of $\ca{L}$.
    \end{enumerate}
\end{theorem}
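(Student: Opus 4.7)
\emph{Proof proposal.} The plan is to chain $(1) \iff (2) \iff (3) \iff (4)$ by unwinding the descriptions of the various topologies in play.

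For $(1) \iff (2)$, I would apply Lemma~\ref{pairwiset1} to the bispace $\bpt(\ca{L})$. Every positive open of $\bpt(\ca{L})$ has the form $\varphi_{\ca{L}}^+(a^+)$ for some $a^+ \in L^+$: the family $\{\varphi_{\ca{L}}^+(a^+):a^+\in L^+\}$ is already closed under the topology's operations since $\bcu_i \varphi_{\ca{L}}^+(a^+_i) = \varphi_{\ca{L}}^+(\bve_i a^+_i)$ and $\varphi_{\ca{L}}^+(a^+) \cap \varphi_{\ca{L}}^+(b^+) = \varphi_{\ca{L}}^+(a^+ \we b^+)$, and symmetrically for negative opens. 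So the $U^+,U^-$ produced by Lemma~\ref{pairwiset1} can be specialised to the $\varphi_{\ca{L}}^+(a^+), \varphi_{\ca{L}}^-(a^-)$ appearing in~(2), giving the equivalence.

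For $(2) \iff (3)$, the negative opens of $Sk_{cf}(\bpt(\ca{L}))$ are by construction the complements of the finitary patch-fitted subsets of $\bpt(\ca{L})$, i.e.\ unions of basic sets $\varphi_{\ca{L}}^+(a^+)^c \cap \varphi_{\ca{L}}^-(a^-)^c$. Discreteness of this topology amounts to every singleton $\{f\}$ being such a union; as $\{f\}$ has only one point, one basic summand $\varphi_{\ca{L}}^+(a^+)^c \cap \varphi_{\ca{L}}^-(a^-)^c$ must contain $f$ and, being contained in $\{f\}$, equal it. This is exactly~(2). Conversely, (2) makes each singleton already a subbasic negative open, which forces the topology to be discrete.

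For $(3) \iff (4)$, discreteness of the negative topology of $Sk_{cf}(\bpt(\ca{L}))$ says every subset of $|\bpt(\ca{L})|$ is negative open, equivalently (by complementation) negative closed. By construction the negative closed sets are the finitary patch-fitted subsets of $\bpt(\ca{L})$, and Lemma~\ref{reallemmacf9} identifies these with the sets $|\bpt(\ca{L}/C)|$ for $C$ a finitary fitted congruence on $L$, i.e.\ with bispectra of finitary fitted biquotients of $\ca{L}$. Hence (3) holds iff every subset of $|\bpt(\ca{L})|$ arises as such a bispectrum, which is~(4). The only step that needs more than unpacking definitions is this matching between finitary patch-fitted subsets of $\bpt(\ca{L})$ and bispectra of finitary fitted biquotients of $\ca{L}$; but this is already carried out in Lemma~\ref{reallemmacf9}, so the step reduces to invoking that lemma.
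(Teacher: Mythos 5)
Your proposal is correct and takes essentially the same route as the paper: Lemma~\ref{pairwiset1} for (1)$\Leftrightarrow$(2), the identification of the basic negative opens of $Sk_{cf}(\bpt(\ca{L}))$ with the sets $\varphi_{\ca{L}}^+(a^+)^c\cap\varphi_{\ca{L}}^-(a^-)^c$ for (2)$\Leftrightarrow$(3), and the correspondence between finitary patch-fitted subsets and bispectra of finitary fitted biquotients for the remaining equivalence. The only cosmetic difference is that you close the chain by proving (3)$\Leftrightarrow$(4) via Lemma~\ref{reallemmacf9}, whereas the paper proves (2)$\Leftrightarrow$(4) directly from items (1) and (4) of Proposition~\ref{manyfacts9}; since Lemma~\ref{reallemmacf9} is itself just that computation packaged as a lemma, the mathematical content is identical.
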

\begin{proof}
(1) and (2) are equivalent. This follows from Lemma \ref{pairwiseT1}.\\[2mm]
(2) and (3) are equivalent. This follows from the fact that the sets of the form $\varphi_{\ca{L}}^+(a^+){}^c\cap \varphi_{\ca{L}}^-(a^-){}^c$ are basic open sets of the negative topology of $Sk_{cf}(\bpt(\ca{L}))$.\\[2mm]
(2)$\implies$(4). If (2) holds, then an arbitrary subspace of $\bpt(\ca{L})$ is of the form $(\bcu_i \varphi_{\ca{L}}^+(a^+_i){}^c\cap \varphi_{\ca{L}}^-(a^-_i){}^c)^c=\bca_i \va{L}{+}{a_i}\cup \va{L}{-}{a_i}$. By items (1) and (4) of Proposition \ref{manyfacts9}, this is the underlying set of the bispectrum of $\ca{L}/\bve_i \del(\syv{a_i}{a_i})$.\\[2mm]
(4) $\implies$ (2). If all subspaces are bispectra of finitary fitted biquotients, then in particular for every $f\in \bpt(\ca{L})$ the subspace $\{f\}^c$ if the bispectrum of $\ca{L}/\bve_i \del(\syv{a_i}{a_i})$ for some families $a^+_i\in L^+$ and $a^-_i\in L^-$. By items (1) and (4) of Proposition \ref{manyfacts9}, this is the same as $\bca_i \va{L}{+}{a_i}\cup \va{L}{-}{a_i}$. Hence, we have that $\{f\}=\bcu_i \va{L}{+}{a_i}^c\cap \va{L}{-}{a_i}^c$, which means that there must be some $j\in I$ with $\{f\}=\va{L}{+}{a_j}^c\cap \va{L}{-}{a_j}^c$.
\end{proof}

\bibliographystyle{elsarticle-harv}
\bibliography{bibliovariation}
\end{document}